%
%

%
\documentclass{amsart}
\usepackage{amsmath}
\usepackage{amsfonts}
\usepackage{amssymb}
\usepackage{amstext}
\usepackage{amsbsy}
\usepackage{amsopn}
\usepackage{amsthm}
\usepackage{amsxtra}
\usepackage{upref}
\usepackage{fancyhdr,amsmath, graphicx, psfrag, color ,amsfonts,layout, amssymb}
\usepackage{euscript,epsfig}

\usepackage[all,cmtip]{xy}


\newcommand{\x}{\ensuremath{\underline{x}}}
\newtheorem{theo}{Theorem}[section]
\newtheorem{coro}[theo]{Corollary}
\newtheorem{lem}[theo]{Lemma}
\newtheorem{prop}[theo]{Proposition}

\newtheorem{rema}[theo]{Remark}
\theoremstyle{definition}

\theoremstyle{remark}

\def \R{\mathbb R}
\def \N{{\mathbb N}}

\def \Z{\mathbb Z}

\def \H{\mathbb H}

\def\C{\mathbb C}
\def\F{\mathcal F}

\def\U{\mathcal U}
\def\M{\mathcal M}
\def\L{\mathcal L}
\def\x{\textbf{x}}
\def\y{\textbf{y}}
\def\w{\textbf{w}}
\def\z{\textbf{z}}

\newcommand{\SL}{{\bf\operatorname{SL}}}
\newcommand{\SO}{{\bf\operatorname{SO}}}

\begin{document}

\title[Unipotent frame flows]{On topological and measurable dynamics of unipotent frame flows for hyperbolic manifolds} 
\author{ Fran\c cois MAUCOURANT, Barbara SCHAPIRA}


\begin{abstract}
We study the dynamics of unipotent flows on frame bundles of hyperbolic manifolds of infinite volume. 
We prove that they are topologically transitive, and that the natural invariant measure, the so-called "Burger-Roblin measure", is ergodic, 
as soon as the geodesic flow admits a finite measure of maximal entropy, and this entropy is strictly
greater than the codimension of the unipotent flow inside the maximal unipotent flow. The latter result generalises a Theorem of Mohammadi and Oh.

\end{abstract}

\maketitle


\section{Introduction}

\subsection{Problem and State of the art}
 
 For $d\geq 3$, let $\Gamma$ be a Zariski-dense, discrete subgroup of $G={\SO}_o(d,1)$.
 Let $N$ be a maximal unipotent subgroup of $G$ (hence isomorphic to $\R^{d-1}$), 
and $U\subset N$ a nontrivial connected subgroup (hence isomorphic to some $\R^k$ in $\R^{d-1}$). 
The main topic of this paper is the study of the action of $U$ on the space $\Gamma \backslash G$. 
Geometrically, this is the space $\mathcal{FM}$ of orthonormal frames 
of the hyperbolic manifold $\mathcal{M}=\Gamma \backslash \H^d$, and the $N$ 
(and $U$)-action moves the frame in a parallel way on the stable horosphere defined 
by the first vector of the frame. There are a few cases where such an action is well understood,
 from both topological and ergodic point of view.
 
\subsubsection{Lattices} 
 
 If $\Gamma$ has finite covolume, then Ratner's theory provides a complete description 
of closures of $U$-orbits as well as   ergodic $U$-invariant measures.
If $\Gamma$ has infinite covolume, while it no longer provide information 
about the topology of the orbits, it still classifies finite $U$-invariant measures. 
Unfortunately, the dynamically relevant measures happen to be of infinite mass. 
In the rest of the paper, we will always think of $\Gamma$ as 
a subgroup having infinite covolume.

\subsubsection{Full horospherical group}

 If one looks at the action of the whole horospherical group $U=N$, 
a $N$-orbit projects on $T^1\mathcal{M}$ onto a leaf of the strong stable 
foliation for the geodesic flow , a well-understood object, 
at least in the case of geometrically finite manifolds. 
In particular, the results of Dal'bo \cite{MR1779902} imply that for a geometrically finite manifold, 
such a leaf is either closed, or dense in an appropriate subset of $T^1\mathcal{M}$.\\

 From the ergodic point of view, there is a natural good $N$-invariant measure, 
the so-called {\em Burger-Roblin measure}, unique with certain natural properties. 
Recall briefly its construction. 
The measure of maximal entropy of the geodesic flow on $T^1\mathcal{M}$, the {\em Bowen-Margulis-Sullivan measure}, 
when {\em finite}, induces a transverse invariant measure to the strong stable foliation. 
This transverse measure is often seen as a measure on the space of horospheres, invariant under the action of $\Gamma$.  
Integrating the Lebesgue measure along these leaves leads to a measure on $T^1\mathcal{M}$, which lifts naturally
to $\mathcal{F}\mathcal{M}$ into a $N$-invariant measure, the {\em Burger-Roblin measure}. 
 
In \cite{Ro},  Roblin extended a classical result of Bowen-Marcus \cite{MR0451307}, 
and showed that, up to scalar multiple, when the Bowen-Margulis-Sullivan measure is finite, 
it induces (up to scalar multiple) the unique invariant measure supported on this space of 
 horospheres, supported in the set of  horospheres based at conical (radial) limit points.

 In particular, if the manifold $\mathcal{M}$ is geometrically finite, 
this gives a complete classification of $\Gamma$-invariant (Radon) measures on the space of horospheres, or equivalently of transverse invariant measures to the strong stable foliation. In general, 
Roblin's result says that there is a unique (up to scaling) transverse invariant measure of full support in
the set of vectors whose geodesic orbit returns i.o. in a compact set. 
 
 It is natural to try to "lift" this classification along the 
principal bundle $\mathcal{FM}\to T^1\mathcal{M}$, since the structure group is compact. This was done by Winter \cite{Winter}, who proved that, up to scaling, 
the only $N$-invariant measure of full support in the set of frames whose $A$-orbit returns i.o. in a compact set
is the Burger-Roblin measure, i.e. the natural $M$-invariant lift of the above measure. 
On geometrically finite manifolds, this statement is simpler: the Burger-Roblin is the unique (up to scaling) $N$-invariant ergodic measure of full support.

\subsubsection{A Theorem of Mohammadi and Oh}

 However, if one considers only the action of a proper subgroup $U\subset N$, the situation changes dramatically, and much less is known, because ergodicity or conservativeness of a measure with respect to a group does not imply in any way the same properties with respect to proper subgroups. In this direction, the first result is a Theorem of Mohammadi and Oh \cite{MO}, which states that, in dimension $d=3$ (in which case $dim(U)=1$) and for convex-cocompact manifolds, the Burger-Roblin measure is ergodic and conservative for the $U$-action if and only if the critical exponent $\delta_\Gamma$ of $\Gamma$ satisfies $\delta_\Gamma>1$.

\subsubsection{Dufloux recurrence results}

In \cite{Dufloux2016, Dufloux2017}, Dufloux investigates the case of small critical exponent. 
Without any assumption on the manifold, when the Bowen-Margulis-Sullivan measure is finite 
(assumption satisfied in particular when $\Gamma$ is convex-cocompact, but not only, see \cite{Peigne}), 
he proves in \cite{Dufloux2016} that the Bowen-Margulis-Sullivan is totally $U$-dissipative when $\delta_\Gamma\le\dim N-\dim U$, 
and totally recurrent when $\delta_\Gamma>\dim N-\dim U$. 
In \cite{Dufloux2017}, when the group $\Gamma$ is convex-cocompact, he proves that when $\delta_\Gamma=\dim N-\dim U$, the Burger-Roblin measure is $U$-recurrent.

\subsubsection{Rigid acylindrical 3-manifolds}

 There is one last case where more is know on the topological properties of the $U$-action, in fact in a very strong form.
Assuming $\mathcal{M}$ is a rigid acylindrical 3-manifold, McMullen, Mohammadi and Oh recently managed in \cite{McMullen2} to classify the $U$-orbit closures, which are very rigid. Their analysis relies on their previous classification of $\SL(2,\R)$-orbits \cite{McMullen}.\\

 Unfortunately, their methods relies heavily on the particular shape of the limit set (the complement of a countable union of disks), and such a strong result is certainly false for general convex-cocompact manifolds.

\subsection{Results}

 The results that we prove here divide  in two distinct parts, a topological one, and a ergodic one. Although they are independent, the strategy of their proofs follow similar patterns, a fact we will try to emphasise.

\subsubsection{Topological properties} 

 Let $A\subset G$ be a Cartan Subgroup. Denote by $\Omega\subset \mathcal{FM}$ the non-wandering set for the geodesic flow (or equivalently, the $A$-action), and by $\mathcal{E}$ the non-wandering set for the $N$-action. For more precise definitions and description of these objects, see section \ref{section2}.\\
 
 Using a Theorem of Guivarc'h and Raugi \cite{MR2339285}, we show:

\begin{theo} \label{Atopmix} Assume that $\Gamma$ is Zariski-dense.
The action of $A$ on $\Omega$ is topologically mixing.  
\end{theo}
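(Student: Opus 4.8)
The plan is to derive topological mixing from two ingredients: topological transitivity of the $A$-action on $\Omega$, and a joint non-arithmeticity of the ``length-and-holonomy'' spectrum furnished by the Theorem of Guivarc'h and Raugi. It is convenient to work in Hopf-type coordinates: writing $G=KAN$ and $M=Z_K(A)\cong\SO(d-1)$ for the centraliser of $A$ in $K$, a point of $\Omega$ is recorded by the backward and forward endpoints $(\xi^-,\xi^+)\in(\Lambda\times\Lambda)\setminus\Delta$ of its geodesic (where $\Lambda$ is the limit set of $\Gamma$), a time parameter along that geodesic, and a frame-rotation parameter in $M$. Since $A$ and $M$ commute, $\Omega$ is an $M$-bundle over the non-wandering set of the geodesic flow on $T^1\mathcal M$, and in these coordinates the flow translates only the time parameter. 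Topological mixing is then the assertion that, given nonempty open sets $\mathcal U,\mathcal V\subset\Omega$, for every sufficiently large $t$ the image of $\mathcal U$ under the time-$t$ map of the flow meets $\mathcal V$. The obstruction to reaching \emph{all} large times, rather than a discrete progression of them, is exactly the arithmetic structure of the set of periods-with-holonomy, and it is this that Guivarc'h--Raugi controls.

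First I would establish topological transitivity. As $\Gamma$ is Zariski-dense it is non-elementary, so its diagonal action on $(\Lambda\times\Lambda)\setminus\Delta$ is topologically transitive, which gives transitivity of the geodesic flow on the base $T^1\mathcal M$. Lifting transitivity to the frame bundle requires the frame-rotations to be dense in $M$, and this is precisely what Guivarc'h--Raugi provides: for $\Gamma$ Zariski-dense in $G$, the $M$-components $m_\gamma\in M$ attached to the loxodromic elements $\gamma\in\Gamma$ (the holonomies of the corresponding closed geodesics) are dense in $M$. Combining density of endpoint pairs with density of holonomies produces a dense $A$-orbit in $\Omega$, hence transitivity.

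The key step is to promote transitivity to mixing by a loop-insertion argument. Fix $\mathcal U,\mathcal V$ as above. By transitivity there is an orbit segment connecting a point of $\mathcal U$ to a point of $\mathcal V$, of some length $t_0$ and arriving with some holonomy. I then modify this segment by routing it close to a closed geodesic and winding around it: for a loxodromic $\gamma$ with Jordan data $(a_\gamma,m_\gamma)\in A\times M$, the hyperbolicity of the geodesic flow (local product structure and shadowing) allows the concatenation to be realised, adding $(\ell_\gamma,m_\gamma)$ to the running (time, holonomy) budget, so that $n$ windings add $(n\ell_\gamma,m_\gamma^{\,n})$; varying how the connecting orbit approaches the closed geodesic realises the conjugates of $m_\gamma$ as well, and one is free to use several closed geodesics. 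Here Guivarc'h--Raugi enters decisively: for Zariski-dense $\Gamma$ the $MA$-components of the loxodromic elements are \emph{non-arithmetic}, in the sense that the closed subgroup of $MA\cong M\times\R$ they generate is all of $MA$, the holonomies being already dense in $M$ on their own. Using that a closed subsemigroup of a compact group is automatically a subgroup, together with the non-arithmeticity of the length spectrum (so that the $\ell_\gamma$ generate a dense subgroup of $\R$), I can first adjust the arrival holonomy to any prescribed value and then pad the travel time by windings leaving the holonomy essentially unchanged. This makes the achievable (time, holonomy) pairs connecting $\mathcal U$ to $\mathcal V$ dense in $[T_0,\infty)\times M$ for some $T_0$; in particular, for every $t\ge T_0$ a point of $\mathcal U$ is carried into $\mathcal V$ by the time-$t$ flow, which is topological mixing.

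I expect the main obstacle to be the rigorous realisation of the loop insertion while simultaneously controlling the $M$-holonomy and keeping the constructed orbit genuinely inside the frame-bundle open sets $\mathcal U,\mathcal V$: the closing and shadowing estimates must be carried out in $\Omega$ with its possibly non-compact, infinite-volume geometry, and must track the frame component, not merely the footpoint on $T^1\mathcal M$. A second, more delicate point is the passage from group-density of the $MA$-spectrum to genuine density of the time-positive loop-insertion semigroup in $[T_0,\infty)\times M$: this is the real combinatorial heart of the argument, and it draws on the full \emph{joint} non-arithmeticity in $\R\times M$. Separate density of lengths in $\R$ and of holonomies in $M$ would not suffice, as the two could be correlated, and it is exactly this joint statement for Zariski-dense groups that the Theorem of Guivarc'h and Raugi supplies.
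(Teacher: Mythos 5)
Your strategy---prove transitivity first, then upgrade to mixing by winding around closed geodesics and invoking a joint non-arithmeticity of the (length, holonomy) spectrum---is the classical Brin/Dal'bo route, and it is genuinely different from the paper's argument. But as written it has a real gap exactly where you place the ``combinatorial heart''. The theorem of Guivarc'h--Raugi that the paper actually uses (\cite[Theorem 2]{MR2339285}) is a \emph{minimality} statement: for a homomorphism $\rho$ from $MAN$ to a compact connected group $C$, the $\Gamma$-action on the preimage $\Lambda_\Gamma^\rho$ of the limit set in the extension $X^\rho=G\times C/\!\sim$ is minimal. It is \emph{not} the statement you attribute to it, namely that the closed subgroup of $M\times\R$ generated by the pairs $(m_\gamma,\ell_\gamma)$ of loxodromic elements is all of $M\times\R$. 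Deriving such a joint spectrum statement is nontrivial and your proposal does not do it: holonomies are only defined up to conjugacy and get composed with the uncontrolled holonomies of your connecting segments, and one cannot simply pass to normal closures since $M=\SO(d-1)$ need not be simple (for $d=5$, $M=\SO(4)$ has proper closed connected normal subgroups). The same is true of your transitivity step: density of holonomy \emph{conjugacy classes} in $M$ does not by itself produce a dense $A$-orbit in the $M$-bundle $\Omega$. Finally, the loop insertion itself---producing genuine orbits in $\Omega$ (both endpoints in $\Lambda_\Gamma$) realising a prescribed (time, holonomy) budget while tracking the frame component, in a possibly geometrically infinite manifold---is asserted rather than executed. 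You flag these points as ``obstacles'', but they are precisely the mathematical content of the theorem, so the proposal is a plan rather than a proof.

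For contrast, the paper sidesteps every one of these difficulties with a short contradiction argument \`a la Shub/Dal'bo. Assuming $\mathcal{U}a_{t_k}\cap\mathcal{V}=\emptyset$ along some $t_k\to+\infty$, it picks a frame $\x\in\mathcal{V}$ over a periodic geodesic, with period data $\x a_{l_0}m_0=\x$, and applies Guivarc'h--Raugi to the single compact connected group $C=MA/\langle a_{l_0}m_0\rangle$ (respectively $C=MA/\langle a_{l_0}\rangle$ together with a Montgomery--Samelson dimension count when $d\geq 4$) to prove Proposition \ref{densityperiodic}: the $N$-orbit of a periodic frame is dense in $\mathcal{E}$. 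Writing $t_k=j_kl_0+s_k$ and extracting limits $s_\infty$ and $m_\infty=\lim m_0^{j_k}$, this density furnishes $\w=\x a_{-s_\infty}m_\infty n\in\mathcal{U}$, and the contraction $a_{t_k}na_{-t_k}\to e$ yields $\w a_{t_k}\to\x\in\mathcal{V}$, a contradiction. Note that quotienting by $\langle a_{l_0}m_0\rangle$ encodes exactly the joint length--holonomy information you were trying to extract as a spectrum statement, but packaged so that no closing lemma, no positive-time semigroup bookkeeping, and no compactness of $\Omega$ is ever needed. To salvage your route you would need an honest proof or citation of the joint density in $\R\times M$ (taking the conjugacy ambiguity into account) together with a worked-out closing argument in the frame bundle; the paper's mechanism shows how the minimality theorem can replace both at once.
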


 This allows us to deduce:

\begin{theo} \label{topologicallytransitive} Assume that $\Gamma$ is Zariski-dense.
The action of $U$ on $\mathcal{E}$ is topologically transitive. 
\end{theo}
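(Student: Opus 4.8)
The plan is to reduce the statement to a \emph{connecting property} and then to derive that property from the frame-flow mixing of Theorem~\ref{Atopmix}. Since $\mathcal E$ is second countable and Baire, topological transitivity of the right $U$-action follows once we show: for every pair of nonempty open sets $V,W\subseteq\mathcal E$ there is $u\in U$ with $Wu\cap V\neq\emptyset$. Indeed, for a fixed countable basis $(V_n)$ of $\mathcal E$ the sets $\bigcup_{u\in U}V_n u^{-1}$ are open, and the connecting property is exactly the assertion that each of them is dense; by Baire their intersection is a dense $G_\delta$ of points with dense $U$-orbit. Using the description of $\Omega$ and $\mathcal E$ from Section~\ref{section2} --- in particular $\Omega\subseteq\mathcal E$, the $M$-invariance of $\Omega$, and the fact that every $N$-orbit in $\mathcal E$ meets $\Omega$, so that $\mathcal E=\Omega N$ --- I would first shrink $V$ and $W$ and choose base points $x_1\in\Omega$ near $W$ and $x_2\in\Omega$ near $V$, reducing the problem to moving a neighbourhood of $x_1$ onto a neighbourhood of $x_2$ by an element of $U$.

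The obstruction is that $U$ is a proper subgroup of $N$: writing $N=\exp\mathfrak n$ with $\mathfrak n\cong\R^{d-1}$ and $U=\exp\mathfrak u$ for a proper subspace $\mathfrak u$, the group $A$ acts on $\mathfrak n$ by homotheties, so $A$ normalises $U$ and $a_{-t}\,U\,a_t=U$ for all $t$. Thus flowing a $U$-translation out and back produces only another $U$-translation in the \emph{same} $k$-plane, and $A$ alone can never reach the $N$-directions transverse to $\mathfrak u$. The group that does move these directions is $M\cong\SO(d-1)$, which acts on $\mathfrak n$ by rotations; since $\SO(d-1)$ acts transitively on directions, the $M$-orbit of $\mathfrak u$ spans $\mathfrak n$. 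The key point is that Theorem~\ref{Atopmix} makes these rotations available \emph{dynamically}: applying topological mixing on the frame bundle to a pair $O,\,Om$ (for a small $O\subseteq\Omega$ and $m\in M$) yields, for all large $t$, a point of $O$ whose image under $a_t$ lies in $Om$. In other words, the holonomy of long returning geodesic segments realises a dense subset of $M$, so arbitrary net rotations of the frame are achievable by honest flowing inside $\Omega$ rather than by exterior $M$-translations.

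I would then connect $x_1$ to $x_2$ by concatenating elementary moves of the form: flow by $a_s$ so as to return near the current point rotated by a chosen $m\in M$, then translate by a small $u\in U$. Relative to the original frame such a move displaces the point by $m\,u\,m^{-1}\in mUm^{-1}\subseteq N$ and leaves a residual rotation $m$; choosing the successive rotations so that the planes $m_i\mathfrak u$ span $\mathfrak n$, the product of these displacements approximates any prescribed $N$-translation carrying the $N$-orbit of $x_1$ towards $x_2$. Because $A$ normalises $U$, each such move is genuinely an element of $U$ after conjugating the flow back, and the rotations cost nothing extra since they are produced by the flow itself; the residual rotation can be absorbed using the $M$-invariance of $\Omega$ together with one final short $U$-translation. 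Carrying this out in the local product coordinates $(n^-,a,m,n^+)$ about $x_1$ and $x_2$ then yields the desired $u\in U$ with $Wu\cap V\neq\emptyset$.

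The main obstacle is the \emph{closing step}: turning the heuristic ``mixing realises $M$'' into actual $U$-translations between the two fixed open sets while controlling the accumulated transverse ($N^-$, $A$, $M$) errors. Each realisation of a rotation via mixing lands only \emph{near} the target and introduces small errors in the stable, flow and fibre directions; one must arrange, using the contraction of $A$ on $N^-$ and the freedom in the return times, that these errors stay small throughout the concatenation and do not destroy the $N$-displacement being built, and that every intermediate point remains in $\mathcal E$ (where the structure $\mathcal E=\Omega N$ is available). Making this shadowing quantitative, and checking that the final residual rotation and transverse error can be closed up by a single small $U$-translation landing in $V$, is the technical heart of the argument.
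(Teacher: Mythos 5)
There is a genuine gap, and it sits exactly where you place it yourself: the ``closing step''. Your elementary moves cannot be closed up by a shadowing-type argument, because the frame flow is only partially hyperbolic: the $M$- and $A$-directions are neutral, so the errors you accumulate in the fibre and flow directions are never contracted and there is no mechanism to correct them at the end. Worse, topological mixing applied to the pair $(\mathcal{O},\mathcal{O}m)$ only produces \emph{some} point of $\mathcal{O}$ whose $a_t$-image lies in $\mathcal{O}m$ --- not your current orbit point --- so each ``dynamically realised rotation'' forces you to replace the current point by a nearby one, and these replacements are perturbations in arbitrary directions of $G$, not $U$-translations. Since they occur at intermediate stages they cannot be pre-composed into the initial point, so the concatenation does not produce what transitivity requires, namely a single $u\in U$ and a point $w\in W$ with $wu\in V$: pushing the flow segments to one side using that $A$ normalises $U$ only works for the honest $U$-steps, while the mixing-based re-choices fall outside this algebra. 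A second, quieter gap is in your reduction: choosing base points $x_1,x_2\in\Omega$ ``near'' $W$ and $V$ only helps if the $U$-saturations $WU$ and $VU$ actually meet $\Omega$; since $U$ is a \emph{proper} subgroup of $N$, this does not follow from $\mathcal{E}=\Omega N$, and in the paper it is a separate statement (Proposition \ref{UOmegadense}, density of $\Omega U$ in $\mathcal{E}$) whose proof requires the minimality of the $\Gamma$-action on $\F\Lambda_\Gamma$ together with a tangency construction on the boundary.

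The paper avoids chaining altogether. From mixing of the diagonal $A$-action on $\Omega^2$ one gets a \emph{generic} couple, i.e.\ one whose backward diagonal orbit $(\x a_{-k},\y a_{-k})_{k\ge 0}$ is dense in $\Omega^2$ (Corollary \ref{densediagonal}). The key observation (Lemma \ref{UGeneric}) is that genericity is invariant under $\left((M\times A)\ltimes N^-\right)^2$, hence in Hopf coordinates depends only on the pair of backward endpoints $(y^-,z^-)$; one may therefore realign the couple onto a \emph{single} $U$-orbit by picking a third limit point $\xi$ and a frame based at $\xi$ whose first vector is tangent to the circle through $(\xi,y^-,z^-)$, producing a generic couple of the form $(\x,\x u)$ with $u\in U$. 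Given nonempty open $\mathcal{O},\mathcal{O}'\subset\mathcal{E}$, Proposition \ref{UOmegadense} makes $\mathcal{O}U\cap\Omega$ and $\mathcal{O}'U\cap\Omega$ nonempty open in $\Omega$; density of the backward diagonal orbit lands $(\x a_{-k},\x u a_{-k})$ in their product simultaneously, and since $A$ normalises $U$, $\x u a_{-k}\in\x a_{-k}U\subset\mathcal{O}U$, so $\mathcal{O}U\cap\mathcal{O}'U\neq\emptyset$. In short: your proposal correctly identifies the inputs (mixing of the frame flow, the role of $M$ in rotating $U$ inside $N$), but it is missing the alignment lemma, which is the idea that replaces your quantitative error-control scheme and makes the proof soft; as written, the shadowing step your argument rests on is not available for this partially hyperbolic system.
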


 Both results are new. Note that, for example in the case of a general convex-cocompact manifold with low critical exponent, the existence of a non-divergent $U$-orbit is itself non-trivial, and was previously unknown.

\subsubsection{Ergodic properties}

 We will assume that $\Gamma$ is of divergent type, and denote by $\mu$ 
the Bowen-Margulis-Sullivan measure - or more precisely, its natural lift to $\mathcal{FM}$, normalised to be a probability. We are interested in the case where $\mu$ is a finite measure. 
Denote by $\nu$ the Patterson-Sullivan measure on the limit set, and $\lambda$ the Burger-Roblin measure on $\mathcal{FM}$. More detailed description of these objects is given in section \ref{section4}. \\

 The following is a strengthening of the Theorem of Mohammadi and Oh \cite{MO}. 

\begin{theo} \label{ergodicity} Assume that $\Gamma$ is Zariski-dense.
If $\mu$ is finite and  $\delta_\Gamma+\dim(U)>d-1$, then both measures $\mu$ and $\lambda$ are $U$-ergodic.
\end{theo}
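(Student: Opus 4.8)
The plan is to reduce the $U$-ergodicity of each measure to the $N$-ergodicity of the \emph{full} horospherical action, which is already available: for $\lambda$ this is Winter's theorem \cite{Winter} (after Roblin \cite{Ro}), and for $\mu$ it is the horospherical ergodicity of the Bowen--Margulis--Sullivan measure, a classical consequence of the mixing of the geodesic flow with respect to $\mu$. Thus it suffices to prove the single implication: every bounded $U$-invariant function $f$ is in fact $N$-invariant (modulo the appropriate null sets for $\mu$, resp. $\lambda$). Since $N\cong\R^{d-1}$ is abelian and $U$ corresponds to a $k$-dimensional subspace with complement $W$, so that $N=U\oplus W$, it is enough to show that $f$ is invariant under $W$: the $U$- and $W$-invariances then combine to $N$-invariance. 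This mirrors the passage from Theorem~\ref{Atopmix} to Theorem~\ref{topologicallytransitive}, the measurable statement that $f$ is $W$-invariant playing the role of the topological assertion that the $U$-orbit meets every $W$-direction.

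The two engines are recurrence and mixing. First, because $\mu$ is finite and the hypothesis $\delta_\Gamma+\dim(U)>d-1$ is exactly $\delta_\Gamma>\dim N-\dim U$, Dufloux's theorem \cite{Dufloux2016} ensures that $\mu$ is totally $U$-recurrent; the $U$-recurrence of $\lambda$ in this strict regime follows by the same methods (compare \cite{Dufloux2017}). Conservativity is indispensable, since a dissipative nonsingular action is never ergodic, and concretely it provides, for a.e.\ $x$, a sequence $u_n\to\infty$ in $U$, together with $\gamma_n\in\Gamma$ and $h_n\to e$ in $G$, such that $xu_n=xh_n$. Second, the measurable counterpart of Theorem~\ref{Atopmix} --- the mixing of the $A$-action, and of the frame flow, with respect to $\mu$, where the equidistribution of the $M$-component is furnished by Guivarc'h--Raugi \cite{MR2339285} --- yields the equidistribution of large pieces of expanding horospheres, which is what allows invariance to be transported.

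The core of the argument, and the step I expect to be the main difficulty, is to combine recurrence and mixing into the implication ``$U$-invariant $\Rightarrow$ $W$-invariant''. The idea is to renormalise the returns by the geodesic flow. In the local decomposition of $G$ along $N^-$, $A$, $M$ and $N=U\oplus W$, the element $h_n$ is small in every factor; conjugating by the geodesic flow along a suitable sequence $a_{t_n}$ (with $|t_n|\to\infty$) contracts the $N^-$-factor to the identity, leaves the $A$- and $M$-factors small (they commute with $A$), and dilates the $W$-factor to macroscopic size, while --- by recurrence of the geodesic flow, guaranteed by the finiteness of $\mu$ --- the renormalised base point $xa_{t_n}$ stays in a fixed compact set and subconverges to some $y$. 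Passing to the limit in $f(xu_n)=f(x)$ and using $U$-invariance once more produces, at $y$, a nontrivial $W$-translation $w_\infty$ with $f(y\,w_\infty)=f(y)$. Two points must then be secured: (a) the dilated $W$-component is genuinely nonzero --- this is precisely where $\delta_\Gamma>\dim N-\dim U$ is used, as it is the threshold below which the transverse conditionals of the Patterson--Sullivan measure become too thin for the renormalised returns to escape $U$; and (b) as $x$ and the returns vary, the resulting $W$-directions span all of $W$, which is where Zariski-density and the Guivarc'h--Raugi mixing of the $M$-component enter, exactly as in Theorem~\ref{Atopmix}, to prevent the transverse directions from being confined to a proper subspace. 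The serious technical burden is to carry out this limiting procedure while keeping track of the Radon--Nikodym cocycles (the conformal densities) along the renormalisation, since $\mu$ and $\lambda$ are only quasi-invariant, not invariant, under $W$.

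Finally, the two measures are treated in parallel: the renormalisation lives in the transverse ($\xi^-$ and $M$) coordinates that $\mu$ and $\lambda$ share through the common Patterson--Sullivan density and the common transverse invariant measure to the strong stable foliation, so the implication ``$U$-invariant $\Rightarrow$ $N$-invariant'' holds for both, the mixing input being read off through the $A$-mixing of $\mu$ in one case and through the equidistribution of horospheres with respect to $\lambda$ in the other. Invoking the respective $N$-ergodicities then forces any $U$-invariant function to be constant, which is the desired $U$-ergodicity of $\mu$ and of $\lambda$.
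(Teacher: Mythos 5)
Your overall architecture is genuinely different from the paper's, and it contains gaps that I do not see how to close as stated. The central move --- pass to the limit in $f(\mathbf{x}u_n)=f(\mathbf{x})$ along renormalised returns to produce $f(\mathbf{y}w_\infty)=f(\mathbf{y})$ --- fails for a merely measurable $f$: you cannot evaluate $f$ along a converging sequence of orbit points, and the usual Lusin-set repair requires knowing that the renormalised points $\mathbf{x}a_{-t_n}$ and their $h_n$-perturbations land in the good set, which is a measure estimate for maps under which $\mu$ is \emph{not} quasi-invariant. Worse, the target statement itself is fragile: $\mu$ is not quasi-invariant under $W$, so ``$f$ is $W$-invariant $\mu$-a.e.'' does not transport under translation, and upgrading a single $w_\infty$ at a.e.\ point to invariance under all of $W$ needs a closing argument you do not supply (Guivarc'h--Raugi gives \emph{topological} minimality on $\mathcal{F}\Lambda_\Gamma$, not an a.e.\ statement). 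This is precisely why the paper never proves ``$U$-invariant $\Rightarrow$ $N$-invariant'' for $\mu$. Instead it builds a probability $\eta$ on $\Omega^2$ carried by pairs $(\mathbf{x},\mathbf{x}u)$ on the same $U$-orbit, with marginals absolutely continuous with respect to $\mu$ (one density in $L^2$), giving full mass to Birkhoff-typical couples for the diagonal backward $A$-action (this is where Winter's mixing enters); Lemma \ref{reductiontwo} then shows any bounded $U$-invariant $g$ has zero variance, via the operators $\Phi,\Xi_N$ and $L^2$-approximation --- an argument designed exactly to avoid pointwise limits of measurable functions. A faint cousin of your idea does appear in the paper, but only in Proposition \ref{BMandBRequivalent}: the density-point function $F_{\varepsilon,E}$ in the transverse direction $V$, dilated by $a$-conjugation and constrained by conservativity, transfers ergodicity from $\mu$ to $\lambda$ using Winter's $N$-ergodicity of $\lambda$ --- it never yields $\mu$-ergodicity itself.

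Your step (b) is an assertion, not a proof: the nonvanishing (indeed, abundance in all directions) of the transverse $W$-component of returns is exactly the quantitative content for which the hypothesis $\delta_\Gamma>\dim N-\dim U$ must be mobilised, and the paper spends most of its effort there --- Ledrappier's equality $\underline{\dim}\,\nu=\delta_\Gamma$ (Proposition \ref{dim}), finite $1$-energy on large subsets (Lemma \ref{energie-finie}), Marstrand's $L^2$-projection theorem (Theorem \ref{projL2}) and the Hardy--Littlewood maximal inequality, applied to radial/Grassmannian projections of the leafwise Patterson--Sullivan measures. Two further problems: the renormalisation times $t_n$ are dictated by the sizes $|w_n|$ of the transverse components, while conservativity of the geodesic flow only provides returns along \emph{some} sequence of times, and reconciling the two is exactly where Mohammadi--Oh needed compactness (convex-cocompactness) --- the paper explicitly advertises that its proof avoids such compactness arguments so as to cover all finite-$\mu$ cases. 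Finally, the $U$-recurrence of $\lambda$ you take as input is, in this paper, a \emph{corollary} of Theorem \ref{ergodicity} (Dufloux \cite{Dufloux2016} gives recurrence of $\mu$, which you may use, but \cite{Dufloux2017} treats $\lambda$ only at the critical equality $\delta_\Gamma=\dim N-\dim U$ for convex-cocompact groups), so invoking it here is circular unless you prove it independently.
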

 
 The hypothesis that $\mu$ is finite is satisfied for example 
 when $\Gamma$ is geometrically finite see Sullivan \cite{Sullivan1979}. But there are many other examples, 
see \cite{Peigne}, \cite{Ancona}. 
Note that the measure $\mu$ is {\em not} $U$-invariant, or even quasi-invariant; in this case, ergodicity simply means that $U$-invariant sets have zero of full measure. 
Apart from the use of Marstrand's  projection Theorem, our proof differs significantly 
from the one of \cite{MO}, and does not use compactness arguments, 
allowing us to go beyond the convex-cocompact case. 
It is also, in our opinion, simpler. Note that the work of Dufloux \cite{Dufloux2016} uses the same
assumptions as ours. \\
 
 For the opposite direction, we prove:

\begin{theo} \label{nonergodicity} Assume that $\Gamma$ is Zariski-dense.
If $\mu$ is finite
with $\delta_\Gamma+\dim(U)<d-1$,  
then $\lambda$-almost every frame is divergent.
\end{theo}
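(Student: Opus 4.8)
The plan is to translate divergence of a $U$-orbit into a purely geometric condition on the limit set, and then to show that this condition holds almost surely by a slicing argument powered by the dimension hypothesis $\delta_\Gamma+\dim(U)<d-1$. First I would disintegrate the Burger--Roblin measure $\lambda$ along the strong stable foliation: by its very construction, $\lambda$-almost every frame lies on an $N$-leaf whose fixed endpoint $\xi$ is $\nu$-distributed, while the position along the leaf is distributed according to Lebesgue measure on $N\cong\R^{d-1}$; moreover, since $\Gamma$ is of divergent type and $\mu$ is finite, $\nu$-almost every $\xi$ is a radial limit point. Working in the upper half-space model with $\xi=\infty$, the horospheres based at $\xi$ are the horizontal planes $\{y=c\}$, the $N$-action is by horizontal translations (keeping the height $c$ fixed), and the second endpoint $a\in\R^{d-1}$ of the frame is exactly the translation parameter. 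A sub-orbit under $U\cong\R^k$ then moves $a$ along an affine $k$-plane $P=a_0+W$ inside $\{y=c\}$, where $W$ is the direction of $U$. Writing $N=U\oplus U^\perp$ and using Fubini on each leaf, it suffices to prove that for Lebesgue-almost every translate $w\in U^\perp$ the $k$-plane $P_w$ gives rise to a divergent orbit.

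The key geometric observation, which I would isolate as a lemma, is a recurrence criterion: a frame whose footpoint is $(a,c)$ projects into a fixed compact subset of $\mathcal{M}$ if and only if $\mathrm{dist}(a,\Lambda)\le K\,c$ for a constant $K$ depending only on the compact set. This follows from the Sullivan shadow lemma: a footpoint at height $c$ lies at bounded distance from a $\Gamma$-orbit point precisely when it lies within $O(c)$ of the horizontal position of an orbit point at comparable height, and such positions form an $O(c)$-dense discretisation of $\Lambda$ at scale $c$. Since the height $c$ is constant along a $U$-orbit, this shows that the orbit through $P_w$ is divergent if and only if $\mathrm{dist}(a,\Lambda)\to\infty$ as $a\to\infty$ along $P_w$.

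It then remains to prove that this holds for almost every $w$. The dimension hypothesis enters through the orthogonal projection $\pi:\R^{d-1}\to U^\perp$ along $U$: as $\dim\pi(\Lambda)\le\dim\Lambda=\delta_\Gamma<\dim U^\perp=d-1-k$, the set $\pi(\Lambda)$ is Lebesgue-null (the elementary half of Marstrand's theorem), so for almost every $w$ the plane $P_w$ is disjoint from $\Lambda$. The difficulty is that disjointness is not enough: one must rule out that $P_w$ returns within $O(c)$ of $\Lambda$ along a sequence escaping to infinity, i.e. accumulating on $\xi$. This is the main obstacle, and it is where the radiality of $\xi$ is essential. I would exploit it by choosing $\gamma_n\in\Gamma$ contracting toward $\xi$ and pulling the picture near $\xi$ at scale $2^{-n}$ back to unit scale; since $\Gamma$ preserves $\Lambda$, a return of $P_w$ at large position becomes a return, at unit scale, of an approximate $k$-plane within a shrinking $\varepsilon_n$-neighbourhood of $\Lambda$, with $\varepsilon_n\to0$ geometrically. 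Via the shadow lemma and the projection estimate, the measure of translates $w$ producing such a return at scale $n$ is bounded by $\varepsilon_n^{\,(d-1-k)-\delta_\Gamma}$, whose exponent is positive exactly because $\delta_\Gamma+\dim(U)<d-1$. These bounds are summable in $n$, so Borel--Cantelli yields that, for almost every $w$, returns occur only at bounded position; hence $\mathrm{dist}(a,\Lambda)\to\infty$ and the orbit diverges. Integrating back over the leaves and over $\nu$ gives that $\lambda$-almost every frame is divergent. The technical heart is making the rescaling uniform, i.e. controlling the distortion of $\gamma_n$ and the conformal density $\nu$ across all scales so that the per-scale estimate holds with a uniform implied constant.
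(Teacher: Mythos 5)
Your overall skeleton (disintegrate $\lambda$ over the leaves, turn recurrence into proximity of the footpoint to $\Lambda$ at the scale of the height, then kill returns by a dimension-gap summation) is the right shape, and I believe it can be completed when $\Gamma$ is convex-cocompact. But for the theorem as stated -- $\mu$ finite, which includes geometrically \emph{infinite} groups (Peign\'e's examples) -- there is a genuine gap at the quantitative heart. Your per-scale bound ``measure of bad translates $w$ is $\lesssim \varepsilon_n^{(d-1-k)-\delta_\Gamma}$'' implicitly covers $\Lambda$ (after localization, the unit-scale slab pieces of $\Lambda$) by $O(\varepsilon^{-\delta_\Gamma})$ balls of radius $\varepsilon$, i.e.\ it needs upper box-counting regularity of the \emph{set} $\Lambda$ at exponent $\delta_\Gamma$. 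That is available in the convex-cocompact case (Ahlfors regularity of $\nu$ via the shadow lemma), but under the actual hypothesis one only has the \emph{measure} statement $\underline{\dim}\,\nu=\delta_\Gamma$ (Proposition \ref{dim}), and even that only in the weak form $\nu(B(\xi,r))\le r^{\delta_\Gamma}e^{2\delta_\Gamma d(\xi,\log r)}$ with a scale-dependent correction (inequality (\ref{estimee-nu})); nothing bounds covering numbers of $\Lambda$, whose Hausdorff or box dimension can exceed $\delta_\Gamma$, so your claim $\dim\pi(\Lambda)\le\dim\Lambda=\delta_\Gamma$ is also unjustified in this generality. Two further points: the Borel--Cantelli must be localized, since Lebesgue measure on $U^\perp$ is infinite and the global bad sets at scale $n$ actually grow like $2^{n\delta_\Gamma}$; and the ``uniform rescaling'' you correctly flag as the technical heart is genuinely unavailable -- radiality gives orbit points near the ray only along \emph{some} unbounded sequence of times, and with merely finite $\mu$ the excursions are only sublinear (Lemma \ref{dist}), so the per-scale constants degrade by $e^{o(n)}$; this is absorbable by the positive exponent gap, but only inside a measure-theoretic estimate, not a set-covering one. (Minor: the ``if and only if'' in your recurrence criterion fails in the non-convex-cocompact case -- points with $\mathrm{dist}(a,\Lambda)\le Kc$ can project deep into a noncompact convex core -- but you only use the harmless direction.)

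It is instructive to see how the paper avoids exactly these obstacles: it never controls the set $\Lambda$, only occupation times, via conditional Bowen--Margulis measures on $N$-leaves. Lemma \ref{comparison-noncompact} bounds the time spent by $\x U$ in a compact set by $\mu_{\x N}(\x U N_{2r})$, through a packing argument and the two-sided bounds for $\z\mapsto\mu_{\z N}(\z N_r)$ on compacts; Lemma \ref{integrale-finie} then shows $\int_M \mu_{\x m N}(\x m U N_r)\,dm<\infty$, where the average over the fiber group $M$ (legitimate because $\lambda$ carries Haar measure on the fibers) produces the transversality factor $(r/|n|)^{\dim N-\dim U}$ purely from the geometry of the rotation group, with no regularity of $\nu$ whatsoever; after a dyadic decomposition, the only input about $\nu$ is its measure of balls around the single generic point $x^+$, controlled by (\ref{estimee-nu}) together with the sublinear growth of Lemma \ref{dist}. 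In effect the paper proves the weaker but sufficient statement ``finite occupation time of every compact,'' while your route aims at the stronger ``eventual escape from the $O(c)$-neighbourhood of $\Lambda$,'' and it is precisely this strengthening that demands the box-counting regularity you do not have. To reach the stated generality you would need to (i) replace covering of $\Lambda$ by the conditional density $d\mu_{\x N}(\x n)\simeq |n|^{2\delta_\Gamma}\,d\nu((\x n)^-)$, and (ii) average over the $M$-fiber, which your write-up ignores even though the direction of the $k$-plane swept by the footpoint depends on the rotational part of the frame; at that point you have essentially reconstructed Lemmas \ref{comparison-noncompact} and \ref{integrale-finie}.
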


In fact, in the convex-cocompact case,   a stronger result holds: for all vectors $v\in T^1\mathcal{M}$ and almost all frames $\x$ 
in the fiber of $v$, the orbit $\x U$ is divergent, see Theorem \ref{divergence} for details.

\subsection{Overview of the proofs}

\subsubsection{Topological transitivity}

 The proof of the topological transitivity can be summarised as follows.
 \begin{itemize}
 \item The $U$-orbit of $\Omega$ is dense in $\mathcal{E}$ (Lemma \ref{UOmegadense}).
 \item The mixing of the $A$-action (Theorem \ref{Atopmix}) implies that there are couples $(\x,\y)\in \Omega^2$, generic in the sense that their orbit by the diagonal action of $A$ by negative times on $\Omega^2$ is dense in $\Omega^2$. 
\item But one can "align" such couples of frames so that $\x$ and $\y$ are in the same $U$-orbit, that is $\x U=\y U$ (Lemma \ref{UGeneric}).
 \end{itemize}

 These   facts easily imply topological transitivity of $U$ on $\mathcal{E}$ 
(see section \ref{prooftoptrans}).\\
 

\subsubsection{Ergodicity of $\mu$ and $\lambda$}

 In the convex-cocompact case, the Patterson-Sullivan $\nu$ is 
Ahlfors-regular of dimension $\delta_\Gamma$. To go beyond that case, we will need 
to consider the lower dimension of the Patterson-Sullivan measure:

$$\underline{\dim} \, \nu =\textrm{infess}\liminf_{r\to 0}\frac{\log \nu(B(\xi,r))}{\log r},$$
which satisfies the following important property.
\begin{prop}[Ledrappier \cite{Ledrappier-Platon}]\label{dim} If $\mu$ is  finite, then $\underline{\dim} \, \nu=\delta_\Gamma $.
\end{prop}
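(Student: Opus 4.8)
The plan is to prove the two inequalities $\underline{\dim}\,\nu\le\delta_\Gamma$ and $\underline{\dim}\,\nu\ge\delta_\Gamma$ separately, both resting on Sullivan's shadow lemma for the $\delta_\Gamma$-conformal density $\nu$ together with the consequences of the finiteness of $\mu$. Fix a base point $o\in\H^d$; for $\gamma\in\Gamma$ and $R>0$ let $\mathcal{O}_R(\gamma)$ be the shadow, seen from $o$, of the ball $B(\gamma o,R)$. Recall that in the visual metric based at $o$ the shadow $\mathcal{O}_R(\gamma)$ is comparable to a ball of radius $\asymp e^{-d(o,\gamma o)}$ around the direction of $\gamma o$, and that the shadow lemma furnishes, for $R$ large, a constant $C=C(R)$ with $\tfrac1C e^{-\delta_\Gamma d(o,\gamma o)}\le\nu(\mathcal{O}_R(\gamma))\le C e^{-\delta_\Gamma d(o,\gamma o)}$, the upper bound holding for all $\gamma$ and the lower bound when $\gamma o$ points in a radial direction. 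Since $\mu$ is finite the geodesic flow is conservative, so the radial limit set carries full $\nu$-measure; I may therefore fix a radial $\xi$, along whose ray $[o,\xi)$ there is a sequence $(\gamma_n)$ with $d(\gamma_n o,[o,\xi))\le R$ and $t_n:=d(o,\gamma_n o)\to\infty$.

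For the inequality $\underline{\dim}\,\nu\le\delta_\Gamma$ I would use only the discrete scales $r_n\asymp e^{-t_n}$. Since $\xi\in\mathcal{O}_R(\gamma_n)\subset B(\xi,c\,r_n)$ and $\nu(\mathcal{O}_R(\gamma_n))\ge\tfrac1C e^{-\delta_\Gamma t_n}$, one gets $\nu(B(\xi,c\,r_n))\gtrsim r_n^{\delta_\Gamma}$, whence $\frac{\log\nu(B(\xi,c\,r_n))}{\log(c\,r_n)}\le\delta_\Gamma+o(1)$ and $\liminf_{r\to0}\frac{\log\nu(B(\xi,r))}{\log r}\le\delta_\Gamma$ for almost every $\xi$; taking the essential infimum gives $\underline{\dim}\,\nu\le\delta_\Gamma$.

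The reverse inequality is the crux and is where the finiteness of $\mu$ is genuinely used: I would show that for almost every radial $\xi$ one has $\nu(B(\xi,r))\le r^{\delta_\Gamma-\varepsilon}$ for all small $r$. The upper bound of the shadow lemma already gives $\nu(B(\xi,c\,r_n))\le\nu(\mathcal{O}_R(\gamma_n))\le C e^{-\delta_\Gamma t_n}$ at the scales $r_n\asymp e^{-t_n}$, and by monotonicity of $r\mapsto\nu(B(\xi,r))$ this controls every intermediate scale $r\in[e^{-t_{n+1}},e^{-t_n}]$ provided the gaps $g_n:=t_{n+1}-t_n$ satisfy $g_n=o(t_n)$ (an elementary computation shows $g_n\le\frac{\varepsilon}{\delta_\Gamma-\varepsilon}t_n$ suffices at step $n$). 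To establish $g_n=o(t_n)$ I would read the excursions between successive $R$-close approaches to the ray as return times of the geodesic flow to a fixed compact set $K$ of positive $\mu$-measure: Kac's lemma (valid because $\mu$ is finite) makes this return time integrable, Birkhoff's theorem yields $t_n\asymp n$, and a Borel--Cantelli argument bounds the largest gap up to step $n$ by $o(n)=o(t_n)$. Feeding this into the interpolation gives $\nu(B(\xi,r))\le r^{\delta_\Gamma-\varepsilon}$ for every $\varepsilon>0$, hence $\liminf_{r\to0}\frac{\log\nu(B(\xi,r))}{\log r}\ge\delta_\Gamma$ almost everywhere, i.e. $\underline{\dim}\,\nu\ge\delta_\Gamma$.

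The main obstacle is precisely this passage from the discrete scales controlled by the shadow lemma to all scales: without quantitative recurrence the excursions of a radial geodesic could be long enough to drop the local dimension below $\delta_\Gamma$ on a set of positive measure, and it is the finiteness of $\mu$, through Kac's lemma and the resulting integrability of excursion lengths, that rules this out. A more conceptual alternative would identify $\underline{\dim}\,\nu$ with the ratio of the measure-theoretic entropy of the geodesic flow for $\mu$ to its unstable Lyapunov exponent: the exponent equals $1$ and, by Otal--Peign\'e, the entropy equals $\delta_\Gamma$ when $\mu$ is finite, so a Ledrappier--Young type argument would again produce $\delta_\Gamma$. I would nonetheless keep the geometric argument as primary, since it delivers the almost-everywhere pointwise statement directly.
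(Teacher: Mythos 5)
Your argument is correct, and for the crux inequality $\underline{\dim}\,\nu\ge\delta_\Gamma$ it takes a genuinely different route from the paper. The skeleton is shared (Sullivan shadows, full $\nu$-measure of the radial limit set, Birkhoff made available by the finiteness of $\mu$), and your upper-bound direction is the same as the paper's; but for the lower bound the paper (Proposition \ref{dim-mesfinie} together with Lemma \ref{dist}) never induces on a compact set. Instead it proves a shadow-type estimate valid at \emph{every} scale, $\nu(B(\xi,r))\le C\,r^{\delta_\Gamma}e^{2\delta_\Gamma d(\xi,-\log r)}$ (inequality (\ref{estimee-nu})), by taking for each $t$ the orbit point nearest to $\xi(t)$ and paying the multiplicative price $e^{2\delta_\Gamma d(\xi,t)}$; it then shows $d(\xi,t)=o(t)$ $\nu$-almost surely by applying Birkhoff to the bounded increment $f(v)=d(v,1)-d(v,0)$ and proving $\int f\,d\mu=0$ via a telescoping argument over annuli --- notably without assuming $d(v,0)\in L^1(\mu)$ and without Kac's lemma. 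You instead control only the discrete scales $e^{-t_n}$ given by returns to a compact set $K$ with $\mu(K)>0$ and interpolate, the gap bound $g_n=o(t_n)$ coming from Kac plus the ergodic theorem for the induced system. This is sound, with two small remarks: continuous-time Kac needs either a cross-section or the time-one map (whose ergodicity follows from the mixing of $\mu$ cited from Winter), and your Borel--Cantelli step can be dispensed with --- once Birkhoff for the induced map gives $t_n/n\to c\in(0,\infty)$, one gets $t_{n+1}/t_n\to 1$, hence $g_n=o(t_n)$ directly; even more simply, Birkhoff applied to $\mathbf{1}_K$ for the flow already forces the visit gaps to be $o(t)$, with no inducing at all. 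You should also make explicit the transfer from $\mu$-generic orbits to $\nu$-generic endpoints (the return-gap property depends only on $\xi^+$ up to bounded enlargements of $R$ and bounded time shifts); the paper carries out exactly this bookkeeping in Lemma \ref{dist}. What the paper's route buys beyond the dimension statement is the quantitative inequality (\ref{estimee-nu}) at all scales, which is reused verbatim in the proof of Lemma \ref{integrale-finie}, hence of Theorem \ref{divergence}; what yours buys is a softer, more classical mechanism (quantitative recurrence to a compact set), at the cost of gluing discrete scales. Your closing remark via entropy and Otal--Peign\'e is essentially the viewpoint of the cited result of Ledrappier, so it is a legitimate, though less self-contained, alternative.
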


 The first step in the proof of topological transitivity is the proof that
the closure of the set of $U$-orbits intersecting $\Omega$ is $\mathcal{E}$.  
The analogue here  is to show that for a $U$-invariant set $E$, 
it is sufficient to show that $\mu(E)=0$ or $\mu(E)=1$ to deduce that $\lambda(E)=0$ or $\lambda(E^c)=0$ respectively. Marstrand's  projection Theorem   and the hypothesis $\delta_\Gamma +\dim(U)>d-1$ allow  us to prove that the ergodicity of $\lambda$ is in fact equivalent to the ergodicity of $\mu$ (Proposition \ref{BMandBRequivalent}). Although it is highly unusual to study the ergodicity of non-quasi-invariant measures, 
it turns out here to be easier, thanks to finiteness of $\mu$. \\
 
 For the second step, we know thanks to Winter \cite{Winter} that 
the $A$-action on $(\Omega^2,\mu \otimes\mu)$ is mixing. So we can find couples $(\x,\y)\in \Omega^2$, 
which are typical in the sense that they satisfy   Birkhoff ergodic Theorem 
for the diagonal action of $A$ for negative times and continuous test-functions. 
By the same alignment argument than in the topological part, one can find such typical couples in the same $U$-orbit.\\
 
 Unfortunately, from the point of view of measures, existence of one individual orbit with some specified properties is meaningless. To circumvent this difficulty, we have to consider plenty of such typical couples on the same $U$-orbit. More precisely, we 
 consider a measure $\eta$ on $\Omega^2$ such that almost surely, 
a couple $(\x,\y)$ picked at random using $\eta$ is in the same $U$-orbit, 
and is typical for the diagonal $A$-action. For this to make sense when comparing 
with the measure $\mu$, we also require that both marginal laws of $\eta$ on 
$\Omega$ are absolutely continuous with respect to $\mu$. 
We check in section \ref{subsectionplenty} that the existence of such a 
measure $\eta$ is sufficient to prove Theorem \ref{ergodicity}. 
This measure $\eta$ is a kind of self-joining of the dynamical system $(\Omega,\mu)$, but instead of being invariant by a diagonal action, we ask that it 
reflects both the structure of $U$-orbits, and the mixing property of $A$.\\

 It remains to show that such a measure $\eta$ actually exists. In dimension $d=3$, we can construct it (at least locally on $\mathcal{F}\H^3$) as the direct image of $\mu\otimes \mu$ by the alignment map, so we present the simpler 3-dimensional case separately in section \ref{constrplentydim3}. The fact that $\eta$ is supported by typical couples on the same $U$-orbit is tautological from the chosen construction. The difficult part is to show that its marginal laws are absolutely continuous. This is a consequence of the following fact:\\
 
 {\em If two compactly supported, probability measures on the plane $\nu_1,\nu_2$ have finite $1$-energy, then for $\nu_1$-almost every $x$, the radial projection of $\nu_2$ on the unit circle around $x$ is absolutely continuous with respect to the Lebesgue measure on the circle.}\\
 
 Although probably unsurprising to the specialists, as there exists many related statements in the literature (see e.g. \cite{Mattila},\cite{MR1333890}), we were unable to find a reference. We prove this implicitly in our situation, using the $L^2$-regularity of the orthogonal projection in Marstrand's Theorem, and the maximal inequality of Hardy and Littlewood.\\
 
 In dimension $d\geq 4$, the construction of $\eta$, done in section \ref{higherdimeta}, is a bit more involved since there is not a unique couple aligned on the same $U$-orbit, especially if $\dim(U)\geq 2$, so we have to choose randomly amongst them, using smooth measures on Grassmannian manifolds. Again, the absolute continuity follows from Mastrand's projection Theorem and the maximal inequality.

\subsection{Organization of the paper}

Section 2 is devoted to introductory material. In section 3, we prove our results on topological
dynamics. In section 4, we introduce the measures $\mu$ and $\lambda$, establish the dimensional properties that we need, and prove Theorem \ref{divergence} and the fact that $U$-ergodicity of $\mu$ and $\lambda$ are
equivalent. Finally, we prove Theorem \ref{ergodicity} in section 5. 



\section{Setup and Notations}

\label{section2}


\subsection{Lie groups, Iwasawa decomposition}

Let $d \geq 2$, and $G=\SO^o (d,1)$, i.e. the subgroup of $\SL(d+1,\R)$ 
preserving the quadratic form $q(x_1,..,x_{d+1})=x_1^2+x_2^2+..-x_{d+1}^2$. 
It is the group of direct isometries of the hyperbolic $n$-space $\mathbb{H}^d=\{x\in\R^{d+1}, q(x)=-1, x_{d+1}>0\}$. 
Define $K<G$ as
\[
   K= \left\lbrace 
\left(  \begin{array}{cc}
 k & 0 \\
 0 & 1 \\
 \end{array} \right) : k \in \SO(d) \right\rbrace.
\]
It is a maximal compact subgroup of $G$, and it is the stabilizer of  the origin $x=(0,...0,1)\in \H^d$. 

 We choose the one-dimensional Cartan subgroup $A$, defined by
\[ 
 A = \left\lbrace a_t = \left(\begin{array}{cc}
   I_{d-2} & 0 \\
0 & \begin{array}{cc}
 \cosh(t) & \sinh(t) \\
 \sinh(t) & \cosh(t) \\
 \end{array}   \\
  \end{array}  \right) \, : \, t\in \R \right\rbrace.
\]
It commutes with the following subgroup $M$, which can be identified with $\SO(d-1)$.
\[  M=\left\lbrace 
\left(  \begin{array}{cc}
 m & 0 \\
 0 & I_2 \\
 \end{array} \right) : m \in \SO(d-1) \right\rbrace.
\]
In other words, the group $M$ is the centralizer of $A$ in $K$. 
The stabilizer of any vector $v\in T^1\H^d$ identifies with a conjugate of $M$, so that 
$T^1\H^d=\SO^o(d,1)/M$. 

 Let $\mathfrak{n}\subset \mathfrak{so}(d,1)$ be the eigenspace of $Ad(a_t)$ with eigenvalue $e^{-t}$. 
Let 
$$
N=\exp(\mathfrak{n}) 
\,.$$ 

It is an abelian, maximal unipotent subgroup, normalized by $A$.
The group $G$ is diffeomorphic to the product $K\times A\times N$. This decomposition is the {\em Iwasawa decomposition} of the group $G$.
\medskip
 
 The subgroup $N$ is normalized by $M$, and $M\ltimes N$ is a closed subgroup isomorphic 
to the orientation-preserving affine isometry group of an $d-1$-dimensional Euclidean space.
\medskip
 
 If $U$ is any closed, connected unipotent subgroup of $G$, 
it is conjugated to a subgroup of $N$ (see for example \cite{MR3012160}).
 Therefore, it is isomorphic to $\R^k$, for some $d \in \{ 0,.., d-1\}$. Through the article, we will always assume that $k\geq 1$.

In this paper, we are interested in the dynamical properties of the right actions of the subgroups
$A,N,U$ on the space $\Gamma\backslash G$. 


\subsection{Geometry}


\subsubsection*{Fundamental group, critical exponent, limit set}

 Let  $\Gamma \subset G=\mbox{Isom}^+(\H^d)$ be a discrete group. 
Let $\mathcal{M}= \Gamma \backslash \H^d$ be the corresponding hyperbolic manifold. 
The limit set $\Lambda_\Gamma$ is the set of accumulation points in $\partial \H^n \simeq \mathbb{S}^{d-1}$ of any orbit $\Gamma o$, where $o \in \H^d$. 
We will always assume that the group $\Gamma$ is nonelementary, that is $\#\Lambda_\Gamma=+\infty$.

\medskip
The critical exponent $\delta$ of the group $\Gamma$ is the infimum of the $s>0$ such that the Poincar\'e series
$$P_\Gamma(s)=\sum_{\gamma\in \Gamma} e^{-sd(o,\gamma o)},$$
is finite, where $o$ is the choice of a fixed point in $\H^d$. In the convex-cocompact case, 
the critical exponent $\delta$ equals the Hausdorff dimension of the limit set $\Lambda_\Gamma$.
 Since $\Gamma$ is non-elementary, we have $0<\delta\leq d-1$.


\subsubsection*{Frames}

 The space of orthonormal, positively oriented frames over $\H^d$  (resp. $\mathcal{M}$) will be denoted by $\mathcal{F}\H^d$ (resp.  $\mathcal{FM}$). 
As $G$ acts simply transitively on $\F\H^d$,  $\mathcal{F}\H^d$ (resp.  $\mathcal{FM}$) 
  can be identified with $G$  (resp. $\Gamma\backslash G$)  by the map  $g\mapsto g.\x_0$, where $\x_0$ is a fixed reference frame. 
Note that $\F\H^d$ is a $M$-principal bundle over $T^1\H^d$, and so is $\F\mathcal{M}$ over $T^1\mathcal{M}$. 
Denote by $\pi_1:\F \mathcal{M} \to T^1\mathcal{M}$ (resp. $\F\H^d\to T^1\H^d$) the projection of a frame onto its first vector. 

\medskip

As said above, we are interested in the properties of the right actions of $A,N,U$ on $\mathcal{FM}$. 

\begin{figure}[ht!]\label{groupactions}
\begin{center}
\input{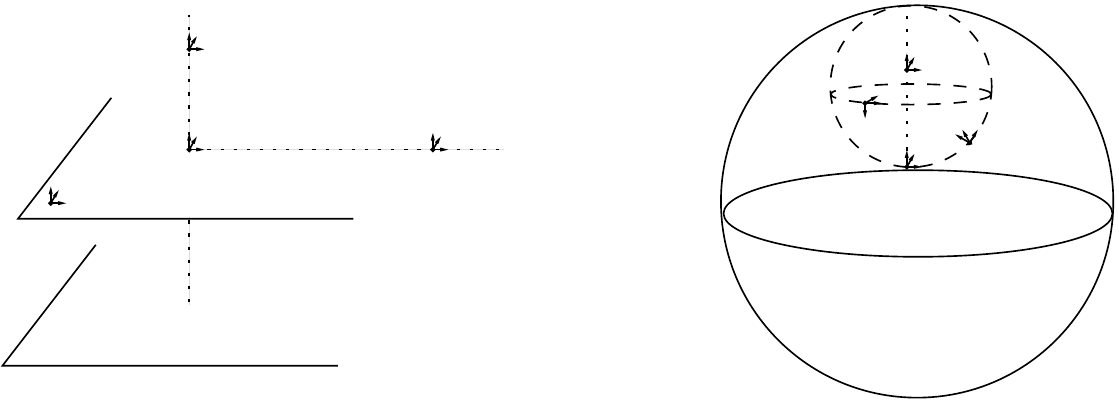_t}
\caption{Right actions of $A$, $N$, $U$ } 
\end{center}
\end{figure} 

\medskip 

Given a subset $E\subset \mathcal{M}$ (resp. $T^1 \mathcal{M}$, $\F \mathcal{M}$), we will write $\tilde{E}$ for its lift to $\H^d$  (resp. $T^1 \H^d$, $\F \H^d$). 

\medskip 
 Denote by $\F \mathbb{S}^{d-1}$ the set of (positively oriented) frames over $\partial \H^d=\mathbb{S}^{d-1}$.
 We will write $\F \Lambda_\Gamma$ for the subset of frames which are based at $\Lambda_\Gamma$.

\subsubsection*{Generalised Hopf coordinates}
Choose $o$ to be the point $(0,\dots, 0, 1)\in\H^d$. 
Recall that the Busemann cocycle is defined on $\mathbb{S}^{d-1}\times \H^d\times \H^d$ by
\[\beta_\xi(x,y)=\lim_{z\to \xi} d(x,z)-d(y,z)\]
 By abuse of notation, if $\x, \x'$ are frames (or $v,v'$ vectors) with basepoints $x,x'\in\H^d$,  
 we will write $\beta_\xi(\x,\x')$ or $\beta_\xi(v,v')$ 
for  $\beta_\xi(x,x')$. 

 We will use the following extension of the classical Hopf coordinates to describe frames. To a frame $\x \in \F\H^d$, we associate
\begin{align*}
\F\H^d & \to (\F \mathbb{S}^{d-1} \times_\Delta \mathbb{S}^{d-1})  \times \R,\\
 \x=(v_1,\dots, v_d) & \mapsto (\x^+,x^-,t_\x),
 \end{align*}
where $x^-$ (resp. $x^+$) is the negative (resp. positive) endpoint in $\mathbb{S}^{d-1}$ of the geodesic $\x A$, $t_\x=\beta_{x^+}(o,\x)$,  
 and $\x^+\in \F\mathbb{S}^{d-1}$ is the frame over $x^+$ obtained
for example by parallel transport along $\x A$ of the $(d-1)$-dimensional frame $(v_2,\dots, v_n)$. The subscript $\Delta$ in $(\F \mathbb{S}^{d-1} \times_\Delta \mathbb{S}^{d-1})$ indicates that this is the product set, minus the diagonal, i.e. the set of $(\x^+,x^-)$ where $\x^+$ is based at $x^-$.

\begin{figure}[ht!]\label{hopf}
\begin{center}
\input{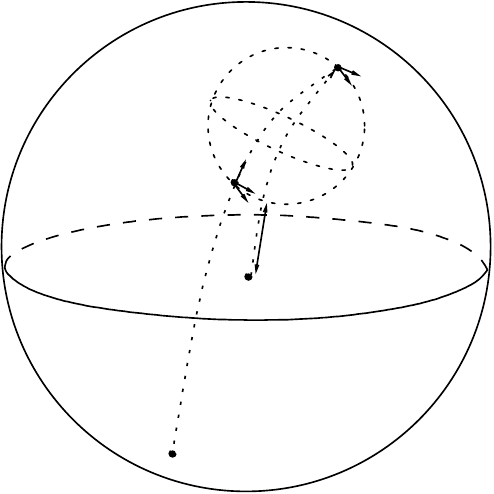_t}
\caption{Hopf frame coordinates } 
\end{center}
\end{figure} 
 
 \medskip
 
 Define the following subsets of frames in Hopf coordinates
 \[
  \widetilde{\Omega}=(\F \Lambda_\Gamma \times_\Delta \Lambda_\Gamma)  \times \R,
 \] 
 and 
 \[
  \tilde{\mathcal{E}} = (\F \Lambda_\Gamma \times_\Delta \partial \H^n)   \times \R.
 \]
  Consider their quotients $\Omega=\Gamma \backslash \widetilde{\Omega}$ and $\mathcal{E}=\Gamma \backslash \tilde{\mathcal{E}}$. 
These are  closed invariant subsets of $\F \mathcal{M}$ for the dynamics of $M\times A$ and $(M\times A) \ltimes N$ respectively, 
where all the dynamic happens. Let us state it more precisely. 
\medskip

The {\em non-wandering set} of the action of $N$ (resp. $U$) on $\Gamma\backslash G$ is the set of frames
$\x\in \F\M$ such that given any neighbourhood $\mathcal{O}$ of $\x$ there exists a sequence $n_k\in N$ (resp. $u_k\in U$) going to $\infty$
such that $n_k \mathcal{O}\cap \mathcal{O}\neq\emptyset$. 
As a consequence of Theorem  \ref{topologicallytransitive}, the following result holds. 
\begin{prop} The set $\mathcal{E}$ is the nonwandering set of $N$ and of any unipotent subgroup $\{0\}\neq U<N$. 
\end{prop}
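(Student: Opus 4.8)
The plan is to prove the two inclusions $\mathcal E\subseteq\mathrm{NW}(U)$ and $\mathrm{NW}(N)\subseteq\mathcal E$, where $\mathrm{NW}(H)$ denotes the non-wandering set of the right $H$-action on $\F\M$, and to combine them with the elementary observation that $\mathrm{NW}(U)\subseteq\mathrm{NW}(N)$ for every $\{0\}\neq U<N$. The latter is immediate: since $U$ is a closed subgroup, a sequence $u_k\to\infty$ in $U$ leaves every compact subset of $N$, so a non-trivial $U$-return of a neighbourhood is \emph{a fortiori} an $N$-return. Once the two displayed inclusions are established, the chain
$$\mathcal E\subseteq\mathrm{NW}(U)\subseteq\mathrm{NW}(N)\subseteq\mathcal E$$
forces all three sets to coincide, which proves the statement simultaneously for $N$ and for every non-trivial $U$.

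For $\mathrm{NW}(N)\subseteq\mathcal E$ I would argue geometrically. Recall that in the Hopf coordinates $\x\mapsto(\x^+,x^-,t_\x)$ the $N$-action fixes the positive endpoint $x^+$ and moves the coordinate $x^-$, so that membership in $\mathcal E$ is exactly the condition $x^+\in\Lambda_\Gamma$. If $x^+\notin\Lambda_\Gamma$, then $x^+$ lies in the domain of discontinuity $\partial\H^d\setminus\Lambda_\Gamma$, on which, together with $\H^d$, the group $\Gamma$ acts properly discontinuously. Since the $N$-orbit of $\x$ projects to a frame lift of the horosphere based at $x^+$, whose only accumulation point on $\partial\H^d$ is $x^+$ itself, a relatively compact neighbourhood $\tilde{\mathcal O}$ of a lift $\tilde\x$ satisfies $\Gamma\,\tilde{\mathcal O}n\cap\tilde{\mathcal O}=\emptyset$ for all large $n\in N$. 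Hence the $N$-return times of a small neighbourhood of $\x$ stay bounded, $\x$ is $N$-wandering, and $\mathrm{NW}(N)\subseteq\mathcal E$.

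The inclusion $\mathcal E\subseteq\mathrm{NW}(U)$ is where Theorem \ref{topologicallytransitive} enters. By that theorem the $U$-action on $\mathcal E$ is topologically transitive, so there is a point $\x_0$ with $\overline{\x_0 U}=\mathcal E$. I first show $\x_0\in\mathrm{NW}(U)$. Fix a neighbourhood $\mathcal O\ni\x_0$ and set $T=\{u\in U:\x_0 u\in\mathcal O\}$, an open non-empty subset of $U\cong\R^k$. I claim $T$ is unbounded: otherwise $T$ would lie in a compact $C$, whence $\x_0 U\cap\mathcal O\subseteq\x_0 C$, and density of the orbit would force $\mathcal O\subseteq\overline{\x_0 C}=\x_0 C$; but $\x_0 C$ is a continuous image of a compact set of dimension $\dim U=k\le d-1$, while $\mathcal E$ has local topological dimension at least $d$ everywhere, since the coordinates $x^-\in\partial\H^d$ and $t\in\R$ vary freely, so $\x_0 C$ has empty interior in $\mathcal E$, a contradiction. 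Choosing $u_1\in T$ and $u_2\in T$ with $|u_2|$ arbitrarily large and setting $u=u_2u_1^{-1}$, and using that $U$ is abelian, one gets $\x_0 u_1\in\mathcal O$ and $(\x_0 u_1)u=\x_0 u_2\in\mathcal O$, so $\mathcal O u\cap\mathcal O\neq\emptyset$ with $u\to\infty$; thus $\x_0\in\mathrm{NW}(U)$. As $\mathrm{NW}(U)$ is closed and $U$-invariant, it contains $\overline{\x_0 U}=\mathcal E$, giving the inclusion.

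The step I expect to require the most care is precisely this last upgrade from topological transitivity, which only yields a single non-trivial return, to the non-wandering property, which demands returns going to infinity. This is exactly where the absence of isolated points in $\mathcal E$ and the dimension comparison $\dim U<\dim\mathcal E$ intervene: one must rule out that the dense orbit $\x_0 U$ be locally closed with non-empty interior. The geometric inclusion, by contrast, is classical, the only point to verify carefully being that the horosphere based at $x^+$ accumulates on $\partial\H^d$ solely at $x^+$, so that proper discontinuity on the ordinary set genuinely applies.
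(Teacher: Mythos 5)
Your proof is correct and follows exactly the route the paper intends: the paper gives no details beyond asserting the proposition as a consequence of Theorem \ref{topologicallytransitive}, and your chain $\mathcal{E}\subseteq\mathrm{NW}(U)\subseteq\mathrm{NW}(N)\subseteq\mathcal{E}$ supplies precisely that deduction, combining the dense $U$-orbit furnished by transitivity with the standard proper-discontinuity argument for $\mathrm{NW}(N)\subseteq\mathcal{E}$. One small repair: a merely continuous image of a compact $k$-dimensional set can have larger topological dimension (Peano curves), so to rule out $\mathcal{O}\cap\mathcal{E}\subseteq\x_0 C$ you should invoke the smoothness of the orbit map $u\mapsto\x_0 u$, which is Lipschitz on the compact set $C$ and hence bounds the Hausdorff (and therefore topological) dimension of $\x_0 C$ by $k\leq d-1<d$, while $\mathcal{O}\cap\mathcal{E}$ contains a slice $\{\x^+\}\times V^-\times I$ of topological dimension $d$.
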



\section{Topological dynamics of geodesic and unipotent frame flows}

\label{sectiontop}

\subsection{Dense leaves and periodic vectors} 
For the proof of Theorem \ref{Atopmix}, we will need the following intermediate result, of independent interest . 

\begin{prop} \label{densityperiodic}
Let $\Gamma$ be a Zariski-dense subgroup of $\SO^o(d,1)$.  
Let {\rm $\x\in\Omega$} be a frame such that {\rm $\pi_1(\x)$} is a periodic orbit
of the geodesic flow on $T^1\M$. Then its $N$-orbit {\rm $\x N$} is dense in $\mathcal{E}$. 
\end{prop}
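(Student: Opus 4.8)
# Proof Proposal for Proposition \ref{densityperiodic}

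The plan is to exploit the periodicity of $\pi_1(\x)$ to generate, via the geodesic flow, an abundance of translated horospheres whose union is dense in $\mathcal{E}$. Let me set up the geometry first. Write $\xi = x^+$ for the positive endpoint of $\x A$, and recall that the $N$-orbit $\x N$ projects to the strong stable horosphere based at $\xi$. Since $\pi_1(\x)$ is periodic, there is an element $a = a_{-T} \in A$ (with $T>0$) and $m \in M$ such that $\gamma \x = \x m a$ for some $\gamma \in \Gamma$ fixing the geodesic $\x A$; equivalently $\gamma$ is a loxodromic element with attracting fixed point $x^-$ and repelling fixed point $\xi = x^+$. The key structural fact I would use is that $A$ normalizes $N$ with a contracting/expanding action: conjugation by $a_{-T}$ expands $N$. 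The central idea is then to study how the closure $\overline{\x N}$ interacts with the geodesic flow: since $a_{-T}$ expands $N$ and $\gamma$ acts as this expansion near $\xi$, the set $\overline{\x N}$ is invariant under a map that pushes it "outward," and combined with the recurrence built into $\Omega$ this should force density.

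First I would establish that $\overline{\x N} \supset \x N \cdot \Gamma$-translates in a usable form, by iterating the loxodromic element. Concretely, because $\gamma \x = \x m a_{-T}$ and $a_{-T} N a_{-T}^{-1} = N$ (with the expanding action), we get $\gamma \x N = \x m a_{-T} N = \x m N a_{-T}$; projecting to $\Gamma\backslash G$ and using $\gamma$-invariance, the closure $\overline{\x N}$ in $\mathcal{E}$ is invariant under the $M$-rotation $m$ followed by the $a_{-T}$-flow. The periodicity thus hands me a single element of the group $(M\times A)\ltimes N$ preserving $\overline{\x N}$. The plan is to upgrade this to invariance under the full group generating $\mathcal{E}$. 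This is where Zariski-density becomes essential: I would invoke a Guivarc'h–Raugi type equidistribution/density statement (the same input announced for Theorem \ref{Atopmix}) to argue that the subgroup of $M$ generated by the rotational parts of periodic loxodromic elements based at $\xi$, together with the $A$-flow, acts with dense orbits on the relevant fiber $\F\mathbb{S}^{d-1}$. Zariski-density of $\Gamma$ prevents the holonomy from being trapped in a proper closed subgroup of $M$, which is precisely the obstruction to density in the frame (as opposed to vector) bundle.

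The structure of the argument in Hopf coordinates is as follows. A point of $\tilde{\mathcal{E}}$ is a triple $(\y^+, y^-, s)$ with $\y^+$ a boundary frame based at a limit point and $y^-$ an arbitrary boundary point. Moving within $\x N$ changes $x^-$ (sweeping it across $\mathbb{S}^{d-1}\setminus\{\xi\}$) while fixing the frame $\x^+$ at $\xi$; flowing by $A$ changes the parameter $t_\x$; and the loxodromic holonomy rotates $\x^+$ by $m$. So to reach an arbitrary target $(\y^+, y^-, s) \in \tilde{\mathcal{E}}$ up to $\Gamma$, I must: (i) match the negative endpoint $y^-$, achievable by the $N$-action since $N$ acts transitively on $\mathbb{S}^{d-1}\setminus\{\xi\}$; (ii) match the boundary frame $\y^+$ over a dense set of limit points, achieved by combining $\Gamma$-translates (density of $\Gamma \xi$ in $\Lambda_\Gamma$, which holds as $\xi$ is a radial limit point) with the $M$-holonomy from periodic orbits to fix the frame orientation; and (iii) match the real parameter $s$ using the $A$-flow. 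I would organize these into successive approximations, using that each operation can be performed while keeping the point inside $\overline{\x N}$ thanks to the invariance established above.

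The main obstacle I expect is step (ii): controlling the $M$-component, i.e.\ showing the frame orientation $\x^+$ can be rotated densely. Merely knowing $\pi_1(\x) N$ is dense in $T^1\M$ is a weaker statement already available from Dal'bo-type results; lifting it to the frame bundle requires genuinely new input, namely that the closed subgroup of $M$ describing the holonomy is all of $M \cong \SO(d-1)$. This is exactly the role of the Guivarc'h–Raugi theorem and Zariski-density: without them, the holonomy could reduce (e.g.\ to a torus or to a proper symmetric subgroup), and the $N$-orbit would close up on a proper sub-bundle. I would therefore spend the bulk of the proof on a careful analysis of the holonomy group generated by the rotational parts of loxodromic elements whose axes accumulate on $\xi$, reducing the density claim to the non-existence of a proper closed subgroup of $M$ containing all these rotations, and then invoking Zariski-density to exclude that possibility.
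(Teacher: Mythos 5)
Your overall architecture coincides with the paper's: Dal'bo's result gives density of $W^{ss}(\pi_1(\x))$, hence of $\x NM$ in $\mathcal{E}$, and everything then hinges on proving $\x M\subset\overline{\x N}$; you also correctly identify that the whole difficulty is the $M$-holonomy and that Zariski density enters through a Guivarc'h--Raugi type statement. But the crux itself is only announced, not proved, and the route you sketch has a genuine gap. The theorem of Guivarc'h--Raugi that the paper uses is not a density statement about rotation parts of loxodromic elements: it is the minimality of the $\Gamma$-action on the extension $X^\rho=G\times C/\!\sim$ of the boundary by the \emph{compact connected} group $C=MA/\langle a_{l_0}m_0\rangle$ (for $d=3$) or $C=MA/\langle a_{l_0}\rangle$ (for $d\geq 4$). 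Minimality applied to $[\tilde{\x},e]$ produces $\gamma_k\tilde{\x}m_ka_kn_k\to\tilde{\x}m$ with $d_k=(m_ka_k)^{-1}(a_{l_0}m_0)^{j_k}\to e$, and the algebraic manipulation $\gamma_k\tilde{\x}(a_{l_0}m_0)^{j_k}(d_k^{-1}n_kd_k)=(\gamma_k\gamma_0^{j_k})\tilde{\x}(d_k^{-1}n_kd_k)\in\Gamma\tilde{\x}N$ converts this into $\x m\in\overline{\x N}$. Your substitute --- ``the subgroup of $M$ generated by the rotational parts of loxodromic elements whose axes accumulate on $\xi$'' --- is not even canonically defined (a rotation part lives in $M$ only after conjugating each axis to a reference frame), and, more importantly, you supply no mechanism by which the holonomy of any loxodromic element \emph{other} than the one through $\x$ acts on $\overline{\x N}$: the invariance you extract from periodicity applies only to frames lying on the axis of that element, and $\x$ lies on no other axis. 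Making several holonomies interact with a single orbit closure (a transitivity-group argument \`a la Brin) is plausible but is exactly the hard content, which your proposal defers.

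There is a second omission in dimension $d\geq 4$. Even granting your single holonomy element $m$, the closure $\overline{\langle m\rangle}$ is merely a torus in $M=\SO(d-1)$, and $\langle a_{l_0}m_0\rangle$ need not be normal in $MA$, so the quotient construction you would want for the extension is unavailable. The paper must instead take $C=MA/\langle a_{l_0}\rangle$, introduce the closed subgroup $M_\x=\{m\in M:\x m\in\overline{\x N}\}$, deduce from the modified limit argument that $M_\x\cdot\overline{\langle m_0\rangle}=M$, and then conclude $M_\x=M$ by a dimension count: $\dim\overline{\langle m_0\rangle}\leq\frac{d-1}{2}$ forces $\dim M_\x\geq\frac{(d-1)(d-3)}{2}$, which exceeds the Montgomery--Samelson bound $\dim\SO(d-2)=\frac{(d-2)(d-3)}{2}$ valid for every proper closed subgroup of $\SO(d-1)$. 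Your closing sentence --- ``invoking Zariski-density to exclude'' a proper holonomy subgroup --- is precisely the step that requires this (or an equivalent) argument; as written, the proposal identifies the obstruction but does not overcome it.
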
 

\begin{proof} First, observe that if $v=\pi_1(\x) \in T^1\M$ is a periodic vector for the geodesic flow, 
then its strong stable manifold $W^{ss}(v)$ is dense in 
$\pi_1(\mathcal{E})$  \cite[Proposition B]{MR1779902}. 

Therefore, $\pi_1^{-1}(W^{ss}(\pi_1(\x)))=\x NM=\x MN$ is dense in $\mathcal{E}$. 
Thus it is enough to prove that
\[
\x M \subset \overline{\x N}.
\]
The crucial tool is a Theorem of Guivarc'h and Raugi \cite[Theorem 2]{MR2339285}. 
We will use it in two different ways depending if $G=\SO^o(3,1)$ or $G=\SO^o(d,1)$, for $d\ge 4$, 
the reason being that $M=\SO(d-1)$ is abelian in the case $d=3$. 

Choose $\tilde{\x}$ a lift of $\x$ to $\widetilde{\Omega}$. As $\pi_1(\x)$ is periodic, 
say of period $l_0>0$, but $\x$ itself 
has no reason to be periodic, there exists $\gamma_0\in \Gamma$ and $m_0\in M$ such that
\[
\tilde{\x}a_{l_0}m_0=\gamma_0\tilde{\x}. 
\]

First assume $d=3$, so both $M$ and $MA$ are abelian groups. 
Let $C$ be the connected compact abelian group  $C=MA/\langle a_{l_0}m_0 \rangle$. 
Let $\rho$ be the homomorphism from $MAN$
to $C$ defined by $\rho(man)=ma$ mod $\langle a_{l_0}m_0 \rangle$. 
Define $X^\rho=G\times C/\sim$, where $(g,c)\sim (gman, \rho(man)^{-1}.c)$. 
The set $X^\rho$ is a fiber bundle over $G/MAN=\partial \H^n$, whose fibers are isomorphic to $C$. 
In other terms, it is an extension of the boundary containing additional information on how $g$ is positioned along $AM$, modulo $a_{l_0}m_0$. 
Let $\Lambda_\Gamma^\rho$ be the preimage of $\Lambda_\Gamma\subset \partial \H^n$ inside $X^\rho$. 
Now, since $C$ is connected, \cite[Theorem 2]{MR2339285} asserts that the action of $\Gamma$ on $\Lambda_\Gamma^\rho$ is minimal. 
Denote by $[g,m]$ the class of $(g,m)$ in $X^\rho$.

\medskip
Let us deduce that $\x M \subset \overline{\x N}$. Choose some $m\in M$. 
As $\Gamma$ acts minimally on $\Lambda_\Gamma^\rho$, 
there exists a sequence $(\gamma_k)_{k \geq 1}$ of elements of $\Gamma$, such
that 
$\gamma_k[\tilde{\x},e]$ converges to $[\tilde{\x}m,e]$. It means that there exist sequences $(m_k)_k\in M^\N$, $(a_k)_k\in A^\N$, $(n_k)_k\in N^\N$, 
such that $\gamma_k\tilde{\x}m_ka_kn_k \to \tilde{\x}m$ in $G$, whereas $\rho(m_ka_kn_k)\to e$ in $C$, which means that there exists some sequence
$j_k$ of integers, such that $d_k:=(m_ka_k)^{-1}(a_{l_0}m_0)^{j_k}\to e $ in $MA$. 

\medskip
Now observe that the sequence 
\[ 
\gamma_k\tilde{\x}(a_{l_0}m_0)^{j_k}(d_k^{-1}n_kd_k)=(\gamma_k\gamma_0^{j_k})\tilde{\x}(d_k^{-1}n_kd_k)\in \Gamma \tilde{\x} N
 \]
 has the same limit as the sequence 
 \[
 \gamma_k\tilde{\x}(a_{l_0}m_0)^{j_k}d_k^{-1}n_k=\gamma_k\tilde{\x}m_ka_kn_k,
\] which   by construction converges to  $ \tilde{\x}m$. 
On $\F\M=\Gamma\backslash G$, it proves precisely that $\x m\in \overline{\x N}$. 
As $m$ was arbitrary, it concludes the proof in the case $n=3$. \\

In dimension $d\ge 4$,  $\langle a_{l_0}m_0 \rangle$ is not always a normal subgroup of $MA$ anymore, so
we have to modify the argument as follows. 

Denote by $M_\x$ the set 
$$M_\x=\{m\in M, \x m\in\overline{\x N}\}\,.$$
 This is a closed subgroup of $M$; indeed, if $m_1,m_2 \in M_\x$, then $\x m_1 \in \overline{\x N}$, so $\x m_1m_2 \in \overline{\x N}m_2=\overline{\x m_2 N}$ since $m_2$  
normalises $N$. Since $\x m_2 \in \overline{\x N}$, we have $\x m_2 N\subset \overline{\x N}$. So $\x m_1 m_2 \in \overline{\x m_2 N}\subset \overline{\x N}$. Thus $M_\x$ is a subsemigroup, non-empty since it contains $e$, and closed. Since $M$ is a compact group, such a closed semigroup is automatically a group.\\

We aim to show that the group $M_\x$ is necessarily equal to $M$.\\

Let $C=MA/\langle a_{l_0} \rangle$. It is a compact connected group. Consider $\rho(man)=ma$ mod $\langle a_{l_0} \rangle$, and
the associated boundary $X^\rho=G\times C/\sim$. Choose some $m\in M$. As above, \cite[Theorem 2]{MR2339285} asserts that the action of $\Gamma$ on $\Lambda_\Gamma^\rho$ is minimal.  
Therefore, there exists a sequence $(\gamma_k)_{k \geq 1}$ of elements of $\Gamma$, such
that 
$\gamma_k[\tilde{\x},e]$ converges to $[\tilde{\x}m,e]$. As above, consider  sequences $(m_k)_k\in M^\N$, $(a_k)_k\in A^\N$, $(n_k)_k\in N^\N$,  
such that $\gamma_k\tilde{\x}m_ka_kn_k \to \tilde{\x}m$ in $G$, whereas $\rho(m_ka_kn_k)\to e$ in $C$, which 
with this new group $C$ means that there exists some sequence
$j_k$ of integers, such that $d_k:=(m_ka_k)^{-1}(a_{l_0})^{j_k}\to e $ in $MA$.

 Similarly to the $3$-dimension case, we can write
 \[ 
\gamma_k\tilde{\x}m_ka_kn_k d_k= \gamma^k \tilde{\x} a_{l_0}^{j_k}(d_k^{-1} n_k d_k)= (\gamma_k \gamma_0^{j_k}) \tilde{\x} m_0^{-j_k} (d_k^{-1} n_k d_k)
 \]
 
The above argument shows that some sequence of frames in $  \x  \langle m_0 \rangle N=\x N \langle m_0 \rangle$ converges to 
$\x m$. This implies that the set of products $M_\x.\overline{\langle m_0\rangle} $ is equal to $M$. 

We use a dimension argument to conclude the proof. The group
 $\overline{ \langle m_0 \rangle}$ is a torus inside $M=\SO(d-1)$, therefore of dimension at most $\frac{d-1}{2}$. The group 
$M$ has dimension $\frac{(d-1)(d-2)}{2}$, so that $M_\x.\overline{\langle m_0\rangle}=M$ implies that $\dim M_\x\ge \frac{(d-1)(d-3)}{2}$.
By \cite[lemma 4]{montgomery-samelson}, the dimension of any proper closed subgroup of $M=\SO(d-1)$ is smaller than $\dim \SO(d-2)=\frac{(d-2)(d-3)}{2}$. 
Therefore, $M_\x$ cannot be a proper subgroup of $M$, so that $M_\x=M$.

\end{proof}

The following corollary is a generalization to $\F \M$ of a well-known result on $T^1\M$, due to Eberlein. 
A vector $v\in T^1\M$ is said quasi-minimizing if there exists a constant $C>0$ such that for all $t\ge 0$, 
$d(g^t v,v)\ge t-C$. In other terms, the geodesic $(g^t v)$ goes to infinity at maximal speed. 
We will say that a frame $\x\in \F\M$ is quasi-minimizing if its first vector $\pi_1(\x)$ is quasi-minimizing.

\begin{coro}\label{density-N-orbites-pas-quasi-minimisantes} Let $\Gamma$ be a Zariski dense subgroup of   $G= \SO^o(d,1)$ . 
A frame {\rm $\x\in \Omega$} is not quasi-minimizing if and only if {\rm $\x N$} is dense in $\mathcal{E}$. 
\end{coro}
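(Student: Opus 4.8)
The plan is to reduce to the known $T^1\M$ statement of Eberlein — a vector $v$ fails to be quasi-minimizing exactly when its strong stable leaf $W^{ss}(v)$ is dense in $\pi_1(\mathcal{E})$ — and then to transfer this across the compact $M$-bundle $\pi_1 \colon \F\M \to T^1\M$ using Proposition \ref{densityperiodic}. Throughout I set $v = \pi_1(\x)$ and use three facts: that $\pi_1(\x N) = W^{ss}(v)$, that $\pi_1$ is proper (its fibres are copies of the compact group $M$) and hence a closed map, and that $\overline{\x N}$ is a closed $N$-invariant subset of $\mathcal{E}$.

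One direction is immediate. If $\x N$ is dense in $\mathcal{E}$, then applying the continuous surjection $\pi_1$ shows that $W^{ss}(v) = \pi_1(\x N)$ is dense in $\pi_1(\mathcal{E})$, so Eberlein's theorem forces $v$, and hence $\x$, not to be quasi-minimizing.

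For the converse I would argue by locating a periodic frame inside $\overline{\x N}$ and then invoking Proposition \ref{densityperiodic}. Assume $\x$, hence $v$, is not quasi-minimizing; Eberlein's theorem gives that $W^{ss}(v)$ is dense in $\pi_1(\mathcal{E})$. Since $\pi_1$ is proper, $\pi_1(\overline{\x N})$ is closed and therefore equals $\overline{W^{ss}(v)} = \pi_1(\mathcal{E})$. As $\Gamma$ is non-elementary it contains a hyperbolic element, whose axis descends to a periodic vector $w$ of the geodesic flow lying in $\pi_1(\Omega) \subseteq \pi_1(\mathcal{E}) = \pi_1(\overline{\x N})$. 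I then lift: there is a frame $\x' \in \overline{\x N}$ with $\pi_1(\x') = w$, and since both endpoints of $w$ lie in $\Lambda_\Gamma$ we have $\x' \in \Omega$. Proposition \ref{densityperiodic} applies to $\x'$ and yields $\overline{\x' N} = \mathcal{E}$. Finally, because $\x' \in \overline{\x N}$ and this set is closed and $N$-invariant, $\overline{\x' N} \subseteq \overline{\x N}$, so $\overline{\x N} = \mathcal{E}$ and $\x N$ is dense.

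The genuine work has already been done upstream — the fibrewise inclusion $\x M \subset \overline{\x N}$ via Guivarc'h–Raugi is buried in Proposition \ref{densityperiodic}, and the $T^1\M$ equivalence is Eberlein's — so the only point here demanding care is the properness of $\pi_1$: it is what guarantees simultaneously that $\pi_1(\overline{\x N}) = \overline{W^{ss}(v)}$ and that a periodic vector found in this closure can be lifted to a frame still lying in $\overline{\x N}$. After that, the nesting $\overline{\x' N} \subseteq \overline{\x N}$ promotes ``$\overline{\x N}$ meets one periodic fibre'' to ``$\overline{\x N} = \mathcal{E}$'' for free, which is what makes the reduction to the periodic case clean and avoids a second run of the minimality argument.
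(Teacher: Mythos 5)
Your proposal is correct and takes essentially the same approach as the paper: both directions rest on the Eberlein--Dal'bo equivalence on $T^1\mathcal{M}$, and the converse locates a frame of $\overline{\x N}$ lying over a periodic vector and concludes via Proposition \ref{densityperiodic} together with the $N$-invariance of the closed set $\overline{\x N}$. Your appeal to properness of $\pi_1$ to lift the periodic vector is just a repackaging of the paper's step $\y M \subset \overline{\x NM}=\overline{\x N}M$, both being the same compactness-of-$M$ input.
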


\begin{proof} First, observe that when $\x\in\Omega$ is quasi-minimizing, 
then the strong stable manifold $W^{ss}(\pi_1(\x))$ of its
first vector is not dense in $\pi_1(\Omega)$.  
Therefore, $\x N\subset \pi_1^{-1}(W^{ss}(\pi_1(\x))$ cannot be dense in $\Omega$. 

Now, let $\x\in \Omega$ be a non quasi-minimizing vector.
 Then $W^{ss}(\pi_1(\x))$ is  dense in $\pi_1(\Omega)$, 
so that $\x NM=\x MN=\pi_1^{-1}(W^{ss}(\pi_1(\x))$
is dense in $\Omega$, and therefore in $\mathcal{E}=\Omega N$. 
 Choose some $\y\in\Omega$ such that $\pi_1(\y) $ is a periodic orbit of the geodesic flow. 
By the above proposition, $\y N$ is dense in $\mathcal{E}$. 
As $\x NM$ is dense in $\mathcal{E}\supset \Omega$, we have $\y M\subset \overline{\x NM}=\overline{\x N}M$ (this last equality following from the compactness of $M$), so that there exists $m\in M$ with $\y m\in\overline{\x N}$. 
But $\pi_1(\y m)=\pi_1(\y)$ is periodic, so that $\y mN$ is dense in $\mathcal{E}$ and $\overline{\x N}\supset \overline{\y mN}\supset \mathcal{E}$. 
\end{proof}

\subsection{Topological Mixing of the geodesic frame flow}

 Recall that the continuous flow $(\phi_t)_{t\in \R}$ (or a continuous transformation $(\phi_k)_{k\in \Z}$) 
on the topological space $X$ is {\em topologically mixing} if for any two non-empty open sets $\mathcal{U},\mathcal{V} \subset X$, 
there exists $T>0$ such that for all $t>T$, 
 \[
 \phi_{-t} \mathcal{U}\cap \mathcal{V} \neq \emptyset. 
 \]

Let us now prove Theorem \ref{Atopmix}, by a refinement of an argument of Shub also used by Dal'bo \cite[p988]{MR1779902}.
\begin{proof} We will proceed by contradiction and assume that the action of $A$ is not mixing. 
Thus there exist $\mathcal{U},\mathcal{V}$  two non-empty open sets in $\Omega$, and a sequence $t_k\to +\infty$, 
such that $\mathcal{U}.a_{t_k}\cap \mathcal{V}=\emptyset$. 
Choose $\x\in \mathcal{V}$ such that $\pi_1(\x)$ is periodic for the geodesic flow - 
this is possible by density of periodic orbits in $\pi_1(\Omega)$ \cite[Theorem 3.10]{MR0310926}. 
Let $l_0>0, m_0\in M$ be such that $\x a_{l_0}m_0=\x$. 

\medskip 
 In particular, we have $\x a_{l_0}^{j}=\x m_0^{-j}\in \x G_{\varepsilon}$ for all $j\in J$. 
We can find integers $(j_k)_k$ (the integer parts of $t_k/l_0$) and real numbers $(s_k)_k$ such that:
$$
t_k=j_kl_0+s_k, \, \mathrm{with} \;  0\le s_k< l_0. 
$$

 Without loss of generality, we can assume that the sequence $(s_k)_{k\geq 0}$ converges to some $s_\infty\in [0,l_0]$, and that $m^{j_k}$ converge in the compact group $M$ to some $m_\infty\in M$.
By Proposition \ref{densityperiodic}, the $N$-orbit 
$\x a_{-s_\infty}m_\infty N$ is dense in $\mathcal{E}$. Notice that $\mathcal{U}N$ is an open subset of $\mathcal{E}$; 
therefore one can choose a point $\w=\x a_{-s_\infty}m_\infty n\in \mathcal{U}$, for some $n \in N$. 

\begin{figure}[ht!]\label{mixing-figure}
\begin{center}
\input{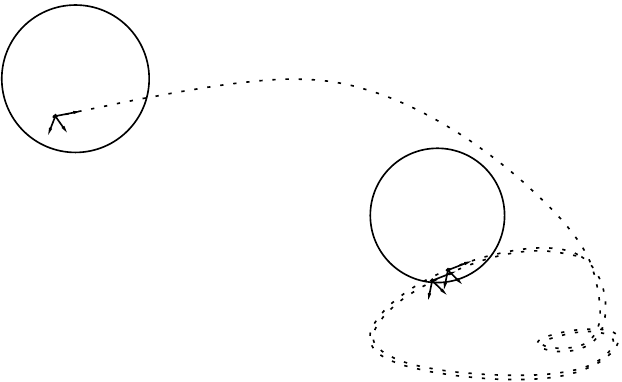_t}
\caption{The frame flow is mixing } 
\end{center}
\end{figure} 
We have

\begin{align*}
\w a_{t_{k}} & = \x a_{-s_\infty}m_\infty n a_{t_{k}} \\
             & =\x ( a_{l_0}m)^{j_k} (m^{-j_k} m_\infty) (a_{s_k-s_\infty}) (a_{t_k} n a_{-t_k})\\
             & = \x (m^{-j_k} m_\infty) (a_{s_k-s_\infty}) (a_{t_k} n a_{-t_k}).
\end{align*}

Observe that, as $N$-orbits are strong stable manifolds for the $A$-action, so 
$$\lim_k a_{t_k} n a_{-t_k}=e.$$
By definition of $m_\infty$ and $s_\infty$,  $\lim_k m^{-j_k} m_\infty=e$ and $\lim_k a_{s_k-s_\infty}=e$.
Therefore, $\w a_{t_{k}}$ tends to the frame $\x$ in the open set $\mathcal{V}$. Thus, we found a frame $\w\in \mathcal{U}$, with $\w a_{t_{k}}\in \mathcal{V}$ for all $k$ large enough. Contradiction. 
\end{proof}

\subsection{Dense orbits for the diagonal frame flow on $\Omega^2$}

 Recall that a continuous flow $(\phi_t)_{t\in \R}$ (or a continuous transformation $(\phi_k)_{k\in \Z}$) 
on the topological space $X$ is said to be {\em topologically transitive} if any nonempty invariant open set is dense.  

\medskip
In the case of a continuous transformation on a complete separable metric space without isolated points, 
topological transitivity is equivalent to the existence of a dense positive orbit, or equivalently, 
that the set of dense positive orbits is a $G^\delta$-dense set (see for example \cite{MR1973093}).

 \medskip
  It is clear that topological mixing implies topological transitivity. Moreover, as is easily checked, topological mixing 
of $(X,\phi_t)$ implies topological mixing for the diagonal action on the product $(X \times X,(\phi_t,\phi_t))$. 

 \medskip 
 A couple $(\x,\y) \in \Omega^2$ will be said  {\em generic} if the negative diagonal, 
discrete-time orbit $(\x a_{-k} ,\y a_{-k} )_{k\geq 0}$ 
is dense in $\Omega^2$. Theorem \ref{Atopmix} about topological mixing of the $A$-action on $\Omega$ has   the following corollary,
 which will be useful in the proof of Theorem \ref{topologicallytransitive}.

 \begin{coro} \label{densediagonal}
 If $\Gamma\subset G={\SO^o(d,1)}$ is a Zariski-dense discrete subgroup, 
then there exists a generic couple $(\textbf{\rm \bf x},\textbf{\rm \bf y}) \in \Omega^2$.
 \end{coro}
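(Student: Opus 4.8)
The plan is to deduce the existence of a generic couple directly from the topological mixing of the diagonal $A$-action, via the Birkhoff transitivity theorem \cite{MR1973093}. First I would record that, by Theorem \ref{Atopmix}, the $A$-action on $\Omega$ is topologically mixing, and that by the remark recalled above this mixing passes to the diagonal action of $A$ on $\Omega^2$, namely $(\x,\y)\mapsto (\x a_t,\y a_t)$. The only thing to notice here is that mixing is stated one-sidedly as $t\to+\infty$: for a box $\mathcal{U}_1\times\mathcal{U}_2$ tested against $\mathcal{V}_1\times\mathcal{V}_2$ one simply takes the larger of the two threshold times produced by mixing in each factor, so the same time $t$ works in both coordinates simultaneously.

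Next I would restrict to integer time. Writing $\phi_s$ for the right translation $\z\mapsto \z a_s$, set $T=(\phi_{-1},\phi_{-1})$ on $\Omega^2$. Applying the continuous diagonal mixing at the integer times $t=k\to+\infty$ shows that for all nonempty open $\mathcal{U},\mathcal{V}\subset\Omega^2$ one has $T^{k}\mathcal{U}\cap\mathcal{V}\neq\emptyset$ for all large $k$, so $T$ is in particular topologically transitive. The point of using $\phi_{-1}$ rather than $\phi_{1}$ is that a forward $T$-orbit of a point $(\x,\y)$ is exactly the negative diagonal discrete-time orbit $(\x a_{-k},\y a_{-k})_{k\geq 0}$; hence a point with dense forward $T$-orbit is precisely a generic couple in the sense of the statement.

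To invoke the characterization of transitivity by the existence (indeed $G^\delta$-density) of points with dense forward orbit, I must check its hypotheses for $(\Omega^2,T)$. The space $\Omega^2$ is a closed subset of $(\Gamma\backslash G)^2$, hence a complete separable metric space. It also has no isolated points: since $\Gamma$ is Zariski-dense it is nonelementary, so $\Lambda_\Gamma$ is infinite and $\Omega$ has more than one point; a topologically mixing transformation of a space with at least two points can have no isolated point (an isolated point would give an open singleton whose iterates, again singletons, could not meet every nonempty open set for all large times), and a finite product of perfect spaces is perfect. With these hypotheses in place, topological transitivity of $T$ yields a point whose forward orbit is dense, that is, a generic couple, which concludes the proof.

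I do not expect a genuine obstacle here, since the corollary is essentially a formal consequence of Theorem \ref{Atopmix}. The only points requiring care are bookkeeping: matching the backward time direction in the definition of \emph{generic} with the one-sided mixing hypothesis, using the diagonal rather than the product action, and verifying the no-isolated-points condition needed to apply the Birkhoff transitivity theorem.
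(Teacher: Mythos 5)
Your proof is correct and follows essentially the same route as the paper: topological mixing of the $A$-action on $\Omega$ (Theorem \ref{Atopmix}) passes to the diagonal action on $\Omega^2$, hence the map $(a_{-1},a_{-1})$ is topologically mixing, hence topologically transitive, and a dense positive orbit of this map is exactly a generic couple. Your additional verifications (one-sided mixing sufficing, $\Omega^2$ being complete separable without isolated points, as required for the Birkhoff transitivity theorem the paper cites from \cite{MR1973093}) are correct bookkeeping that the paper leaves implicit.
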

 \begin{proof} By Theorem \ref{Atopmix}, the geodesic frame flow is topologically mixing. 
Therefore, so is the diagonal flow action 
of $A$ on $\Omega^2$. This implies that the transformation $(a_{-1},a_{-1})$  
on $\Omega^2$ is also topologically mixing, hence topologically transitive, i.e. has a dense positive orbit.
 \end{proof}


 \subsection{Existence of a generic couple on the same $U$-orbit}

 \begin{lem} \label{UGeneric}
  There exists a generic couple of the form $(\textbf{\rm \bf x} ,\textbf{\rm \bf x}  u)$, 
with $\textbf{\rm \bf x}\in \Omega$ and $u\in U$.
 \end{lem}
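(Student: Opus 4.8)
The plan is to start from the generic couple supplied by Corollary~\ref{densediagonal} and to \emph{align} it onto a common $U$-orbit by operations that do not destroy genericity. First I would record which right translations preserve genericity: since genericity only concerns the closure of the backward diagonal orbit, right translation of either frame by an element of $MA$ preserves it (because $MA$ centralises $A$, hence commutes with the diagonal flow), and so does right translation by the horospherical subgroup $\overline N$ that is contracted by $a_{-k}$ as $k\to+\infty$ (then the two backward orbits are asymptotic, hence have the same closure). I would also use the elementary but decisive fact that $\mathrm{Ad}(a_t)$ acts as the homothety $e^{-t}$ on the abelian Lie algebra $\mathfrak n$: every linear subspace of $\mathfrak n$ is therefore $A$-invariant, so $A$ normalises $U$, and consequently the set of couples lying on a common $U$-orbit is invariant under the backward diagonal flow.

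Lifting the couple to $\widetilde\Omega^2$ and writing $g=\tilde{\mathbf x}^{-1}\tilde{\mathbf y}\in G$ for the relative position of the two frames, the admissible moves replace $g$ by $q^{-1}gp$ with $p,q\in P^-=MA\overline N$. Reaching a common $U$-orbit thus amounts to solving $q^{-1}gp\in U$, i.e.\ to showing $g\in P^-UP^-$. Here the rank-one structure helps: by the Bruhat decomposition there are only two $P^-$-double cosets, the trivial one $P^-$ and the open dense big cell $P^-NP^-$, and every nontrivial element of $N$ — in particular every nontrivial $u\in U$ — represents the big cell, so $P^-UP^-$ contains it. A generic couple is in general position, so its relative position $g$ lies in the big cell, whence $g\in P^-UP^-$ and the alignment into $U$ is \emph{algebraically} unobstructed.

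The genuine difficulty is that these $P^-$-moves are not innocuous on $\Omega$: their $\overline N$-component moves the forward endpoint, which will in general leave the limit set $\Lambda_\Gamma$, so the aligned couple falls out of $\Omega^2$. I expect this to be the main obstacle. To overcome it I would not perform the $P^-$-moves literally but exploit the recurrence of the \emph{honest} backward orbit: since it is dense in $\Omega^2$, one can find return times $k_j\to\infty$ and $\gamma_j\in\Gamma$ for which $\tilde{\mathbf x}a_{-k_j}$ and $\gamma_j\tilde{\mathbf y}a_{-k_j}$ both return close to a prescribed pair of $\widetilde\Omega$ chosen on a common $U$-orbit. The conformality of $\mathrm{Ad}(a_t)|_{\mathfrak n}$ ensures that the residual $N$-displacement between the two returning frames keeps a fixed direction under the expanding flow, so it can be steered into the subspace $\mathfrak u$; a closing and limiting argument then upgrades this approximate alignment to an exact $U$-alignment while staying inside $\Omega$. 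Producing an \emph{exact} common-$U$-orbit couple that is still generic, rather than merely a sequence of approximately aligned generic couples, is the crux of the argument.
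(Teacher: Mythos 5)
Your first two paragraphs are sound and essentially coincide with the paper's own strategy: the observation that genericity depends only on the pair of backward endpoints $(y^-,z^-)$ --- equivalently, that the set of lifts of generic couples is invariant under $\left( (M\times A)\ltimes N^- \right)^2$ --- is exactly the key point of the paper's proof, and your Bruhat double-coset computation is a correct algebraic repackaging of the paper's geometric alignment (the paper phrases it as: two frames with Hopf coordinates $(\mathbf{x}^+,y^-,0)$ and $(\mathbf{x}^+,z^-,0)$, sharing the frame at infinity $\mathbf{x}^+$, automatically lie on one $N$-orbit, and the displacement can be steered into $U$ by a suitable choice of $\mathbf{x}^+$). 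Your claim that a generic couple has $g\notin P^-$ is also fine, since generic couples must have distinct backward endpoints.

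The gap is in your third paragraph, which you yourself flag as the crux. First, the recurrence/closing scheme is only a sketch, and the final ``limiting argument'' cannot work as stated: genericity (density of the backward diagonal orbit) is a $G_\delta$ condition, not a closed one, so a limit of approximately aligned generic couples has no reason to be generic --- exactness of the $U$-alignment would be bought at the price of the very property you need to keep. Second, and more importantly, the detour is unnecessary: the obstruction you identify (the $\overline{N}$-components of the $P^-$-moves pushing the forward endpoints out of $\Lambda_\Gamma$) dissolves once you use the freedom in the solution set of $q^{-1}gp\in U$. After alignment the two frames lie on a single $N$-orbit, hence share their common forward endpoint, and as $q$ runs over $P^-$ this common endpoint can be prescribed to be \emph{any} point $\xi\in \partial\H^d\setminus\{y^-,z^-\}$. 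Since $\Gamma$ is non-elementary, $\Lambda_\Gamma$ is infinite, so one may take $\xi \in \Lambda_\Gamma\setminus\{y^-,z^-\}$; this is precisely what the paper does, choosing moreover the frame $\mathbf{x}^+$ over $\xi$ with first vector tangent to the circle through $(\xi,y^-,z^-)$ (after conjugating $U$ by an element of $M$), so that the $N$-displacement between $(\mathbf{x}^+,y^-,0)$ and $(\mathbf{x}^+,z^-,0)$ lies in $U$. The aligned couple then belongs to $\widetilde{\Omega}^2$, and it is generic because the backward endpoints $(y^-,z^-)$ were never touched --- your own invariance statement from the first paragraph finishes the proof. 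No recurrence, closing, or limit is needed.
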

 \begin{proof} By Corollary \ref{densediagonal}, there exists a generic couple.

  \medskip
Let $(\y,\z)\in (\F\H^d)^2$ be the lift of a generic couple. 
Notice that, since the actions of $A$ and $M$ commute  with $A$, 
the set of generic couples is invariant under the action of $(A\times M) \times (A\times M)$. This means that in Hopf coordinates, 
being the lift of a generic couple does not depend on the orientation of the frame $\y^+,\z^+$, nor of the times $t_\y,t_\z$. 
Moreover, since being generic is defined as density for {\em negative} times, one can also freely change the base-points of $\y^+,\z^+$ 
because the new negative orbit will be exponentially close to the old one. In short, being the lift of a generic couple 
(or not) depends only on the past endpoints $(y^-,z^-)$, or equivalently, is $\left( (M\times A)\ltimes N^- \right)^2$-invariant. 
Obviously, $y^-\neq z^-$ since generic couple cannot be on the diagonal.
  
  \medskip 
  Up to conjugation by elements of $M$, we can freely assume that $U$ contains the subgroup corresponding to following the 
direction given by the second vector of a frame. 
  Pick a third point $\xi \in \Lambda_\Gamma$ distinct from $y^-$ and $z^-$, and choose a frame $\x^+\in \F \Lambda_\Gamma$ 
based at $\xi$, whose first vector is tangent to the circle determined by $(\xi,y^-,z^-)$. Therefore, the two frames of Hopf
 coordinates $\x=(\x^+,y^-,0)$ and $( \x^+,z^-,0)$ lie in the same $U$-orbit, thus $(\x^+,z^-,0)=\x u$ for some $u \in U$. 
By construction, the couple $( \x,\x u)$ is the lift of a generic couple.
 \end{proof}

\subsection{Minimality of $\Gamma$ on $\F\Lambda_\Gamma$}

 We recall the following known fact. 

 \begin{prop} \label{minimalboundaryset}
  Let $\Gamma$ be a Zariski-dense subgroup of ${\SO}_o(d,1)$. Then the action of $\Gamma$ on $\F \Lambda_\Gamma$ is minimal.
 \end{prop}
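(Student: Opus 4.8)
The plan is to realise $\F\Lambda_\Gamma$ as the limit-set part of an $M$-bundle over the Furstenberg boundary and then to apply the Theorem of Guivarc'h and Raugi \cite[Theorem 2]{MR2339285}, exactly as in the proof of Proposition \ref{densityperiodic}, but with the simplest possible fibre. First I would identify $\F\mathbb{S}^{d-1}$ with $G/AN$: the stabiliser in $G$ of a boundary frame $\x^+$ is precisely $AN$, since $N$ fixes the endpoint $x^+$ and acts trivially on the parallel-transported limit frame, while $A$ slides $\x$ along the geodesic $\x A$ without changing $\x^+$, so that only $M=\SO(d-1)$ moves $\x^+$ inside its fibre. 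A dimension count, $\dim G/AN=\frac{d(d+1)}{2}-1-(d-1)=\frac{d(d-1)}{2}=\dim\F\mathbb{S}^{d-1}$, confirms this identification. Under it, the bundle map $G/AN\to G/MAN=\mathbb{S}^{d-1}$ is the projection of a frame onto its base point, with compact connected fibre $M$, and $\F\Lambda_\Gamma$ is exactly the preimage of $\Lambda_\Gamma$.

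Next I would cast this as the Guivarc'h-Raugi bundle $X^\rho$. I take $C=M$ and the homomorphism $\rho\colon MAN\to M$, $\rho(man)=m$, which is well defined and multiplicative since $M$ normalises $N$ and commutes with $A$. Then $X^\rho=G\times C/\!\sim$, with $(g,c)\sim(gman,\rho(man)^{-1}c)$, is canonically isomorphic as a $\Gamma$-space to $G/AN$ via $gAN\mapsto[g,e]$ (well defined and bijective because $\ker\rho=AN$), and the preimage $\Lambda_\Gamma^\rho$ of $\Lambda_\Gamma$ is $\F\Lambda_\Gamma$. Since $C=M=\SO(d-1)$ is compact and connected, the connectedness hypothesis of \cite[Theorem 2]{MR2339285} is met, and the theorem asserts that the $\Gamma$-action on $\Lambda_\Gamma^\rho=\F\Lambda_\Gamma$ is minimal, which is precisely the desired conclusion.

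The step I expect to require the most care is verifying that we are genuinely inside the hypotheses of \cite[Theorem 2]{MR2339285}, namely the clean identification $X^\rho\cong\F\mathbb{S}^{d-1}$ together with the connectedness of the fibre $M$, which is what makes the fibre direction fully covered. Minimality of $\Gamma$ on the base $\Lambda_\Gamma$ is classical; all the genuine content lies in filling the $M$-fibre, and this is exactly the phenomenon already exploited, through the relation $\x M\subset\overline{\x N}$ (i.e.\ $M_\x=M$), in Proposition \ref{densityperiodic}. One could in principle reprove minimality on $\F\Lambda_\Gamma$ by that self-contained route, combining classical minimality on $\Lambda_\Gamma$ with the fibre-filling argument of Proposition \ref{densityperiodic}; but invoking Guivarc'h-Raugi with $C=M$ is the shortest path, which is why this can be stated as a recollection of a known fact.
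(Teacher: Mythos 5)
Your proposal is correct and takes essentially the same route as the paper, which likewise invokes Guivarc'h--Raugi \cite[Theorem 2]{MR2339285} with $C=M$ and $\rho(man)=m$, identifying the boundary frame bundle with $G\times M/\!\sim$ (equivalently $G/AN$) and concluding minimality of $\Gamma$ on $\F\Lambda_\Gamma$. The only difference is expository: you spell out the identification $X^\rho\cong G/AN\cong \F\mathbb{S}^{d-1}$ and the stabiliser computation that the paper leaves implicit.
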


 In dimension $d=3$, this is due to Ferte \cite[Corollaire E]{MR1934285}. 
In general, this is again a consequence of Guivarc'h-Raugi \cite[Theorem 2]{MR2339285}, applied with $G=SO_o(d,1)$, $C=M$.
 The set $\F\H^d$ identifies with $G\times M/\sim$ where $(g,m)\sim (gm'an,m'^{-1}m)$. \cite[Theorem 2]{MR2339285}  asserts
that the $\Gamma$-action on $\F\H^d=G\times M/\sim$ has a unique minimal set, which is necessarily $\F\Lambda_\Gamma$.  
 
 
\subsection{Density of the orbit of $\Omega$}
 
 \begin{prop}\label{UOmegadense}
The $U$-orbit of $\Omega$ is dense in $\mathcal{E}$.
 \end{prop}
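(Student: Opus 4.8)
The plan is to reduce the statement to a question about round spheres in $\partial\H^d$. In the generalised Hopf coordinates $\x=(\x^+,x^-,t_\x)$, the subgroup $U\subset N$ fixes the frame at infinity $\x^+$ and the parameter $t_\x$, and sweeps the negative endpoint $x^-$ along a round $k$-dimensional sphere $S_\x\subset\partial\H^d$ that passes through the positive endpoint $x^+$ and is tangent there to the $k$-plane $W(\x^+)$ read off from the frame. Since a point of $\mathcal E$ already has $\x^+\in\F\Lambda_\Gamma$, and lying in $\widetilde\Omega$ only forces the negative endpoint to be a limit point, a frame $\x\in\mathcal E$ belongs to $\Omega U$ if and only if $S_\x$ meets $\Lambda_\Gamma$ in a point other than its tip $x^+$; equivalently, $\Omega U$ is exactly the set of frames of $\mathcal E$ whose sphere carries two limit points, the tip being one of them. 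The goal is then to show that such frames are dense in $\mathcal E$.

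First I would reduce to a convenient target. Using $\mathcal E=\Omega N$ together with Proposition \ref{densityperiodic} (the orbit $\y N$ is dense in $\mathcal E$ whenever $\pi_1(\y)$ is periodic), it suffices to prove $\y N\subset\overline{\Omega U}$ for one such $\y$, since then $\mathcal E=\overline{\y N}\subset\overline{\Omega U}\subset\mathcal E$. Writing $\gamma_0\in\Gamma$ for the loxodromic element with $\tilde\y a_{l_0}m_0=\gamma_0\tilde\y$, every frame $\y n$ has positive endpoint equal to the attracting fixed point $y^+=\gamma_0^+\in\Lambda_\Gamma$, so all the target spheres $S_{\y n}$ already pass through the limit point $y^+$. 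Given a target $\y n$ and $\varepsilon>0$, I would look for $\x\in\Omega$ with positive frame within $\varepsilon$ of that of $\y n$, with $t_\x=t_{\y n}$, and whose second limit endpoint $x^-\in\Lambda_\Gamma$ lies within $\varepsilon$ of $S_{\y n}$; flowing by $U$ from $x^-$ along $S_\x\approx S_{\y n}$ then produces a point of $\Omega U$ within $\varepsilon$ of $\y n$. Because $w^-$ may be far from $\Lambda_\Gamma$, one cannot simply place $x^-$ near $w^-$; instead I would use that $\gamma_0$ is a Möbius transformation fixing $y^+$ that permutes the spheres through $y^+$ and preserves $\Lambda_\Gamma$, so that an intersection of the sphere with $\Lambda_\Gamma$ near $y^+$ can be expanded by the iterates $\gamma_0^{-k}$ into an intersection at the required macroscopic location along $S_{\y n}$, with controlled tangent plane at $y^+$.

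The heart of the matter, and the step I expect to be the main obstacle, is to guarantee that a sphere through $y^+$ tangent to a $k$-plane $W$ close to the target plane $W_0$ really does meet $\Lambda_\Gamma$ a second time. This is precisely a directional (transversality) statement about $\Lambda_\Gamma$ at $y^+$: limit points must accumulate at $y^+$ from directions meeting, after the small rotation of $W_0$ allowed by perturbing the frame's orientation, the sphere of directions $\mathbb S^{k-1}(W_0)$ of the target plane. This is exactly what fails for non-Zariski-dense groups and is supplied here by the hypothesis, via the self-similar structure of $\Lambda_\Gamma$ at the radial point $y^+$: the linearisation of $\gamma_0$ at $y^+$ is the conformal map $e^{-l_0}m_0$, so for $q\in\Lambda_\Gamma\setminus\{\gamma_0^-\}$ the iterates $\gamma_0^k q$ tend to $y^+$ along the directions $m_0^k v_q$, and letting $q$ vary over $\Lambda_\Gamma$ while combining the rotation parts $m_0$ of different loxodromic elements, Zariski density (Guivarc'h--Raugi, as already invoked in Proposition \ref{densityperiodic}, together with minimality of $\Gamma$ on $\F\Lambda_\Gamma$, Proposition \ref{minimalboundaryset}) forces these approach directions to be dense in the sphere of directions at $y^+$. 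Hence every target direction $W_0$ is met by some nearby $W$, the required second limit point on an approximating sphere exists, and the density of $\Omega U$ in $\mathcal E$ follows.
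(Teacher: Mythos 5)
Your geometric characterisation of $\Omega U$ is correct (a frame $\x\in\mathcal{E}$ lies in $\Omega U$ if and only if the $k$-sphere $S_\x$ through $x^+$ determined by the frame meets $\Lambda_\Gamma$ at a second point), and it is indeed the picture underlying the paper's proof. But your route has a genuine gap at exactly the step you flag as the main obstacle. By reducing to a single dense $N$-orbit $\y N$ with $\pi_1(\y)$ periodic, you freeze the tip: every target sphere is based at the fixed point $y^+$ of $\gamma_0$, with an essentially fixed tangent $k$-plane $W_0$, since all frames $\y n$ share the same frame at infinity. You therefore need a \emph{pointwise} statement: the approach directions of $\Lambda_\Gamma$ at $y^+$ must meet every neighbourhood of the direction sphere $\mathbb{S}^{k-1}(W_0)\subset\mathbb{S}^{d-2}$. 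Your justification of this does not work as stated. The only self-similarity of $\Lambda_\Gamma$ available at $y^+$ is the linearisation $e^{-l_0}m_0$ of $\gamma_0$; from one point $q\in\Lambda_\Gamma$ it produces approach directions along a single orbit of the torus $\overline{\langle m_0\rangle}\subset M$, which has dimension at most $\frac{d-1}{2}$ and is in general far from dense in $\mathbb{S}^{d-2}$. You cannot ``combine the rotation parts of different loxodromic elements'' at $y^+$, because those elements do not fix $y^+$: conjugating moves the fixed point. And the results you cite — Guivarc'h--Raugi via Propositions \ref{densityperiodic} and \ref{minimalboundaryset} — are density statements in which the \emph{basepoint varies} over $\Lambda_\Gamma$: minimality of $\Gamma$ on $\F\Lambda_\Gamma$ produces frames close to a given frame at $y^+$ but based at nearby points $\gamma\xi\neq y^+$, so it gives no directional information at $y^+$ itself. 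Establishing the pointwise claim (if true) would require work of the same order as Proposition \ref{densityperiodic} — an extended-boundary argument together with the coupled scale/rotation analysis that forces the Montgomery--Samelson dimension bound there when $d\ge 4$. The same coupling also undermines your expansion step: applying $\gamma_0^{-k}$ to push a microscopic second limit point out to macroscopic distance along $S_{\y n}$ simultaneously rotates the tangent plane by $m_0^{-k}$, so scale and direction cannot be adjusted independently, which is again the unresolved difficulty.

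The paper sidesteps the pointwise statement entirely, and this is the idea your proposal is missing. It needs only \emph{one} pair $(\xi,R)\in\F\Lambda_\Gamma$ such that $\Lambda_\Gamma$ accumulates at $\xi$ tangentially to the first vector of $R$; such a pair exists by soft compactness (extract from any sequence $\xi_n\to\xi$ in $\Lambda_\Gamma$ a subsequence with converging directions and choose the frame accordingly). The set $E$ of such pairs is $\Gamma$-invariant and nonempty, hence dense in $\F\Lambda_\Gamma$ by minimality (Proposition \ref{minimalboundaryset}). Crucially, the final approximation then perturbs \emph{both the basepoint and the frame}: given a target $\x=(\x^+,x^-,t_\x)\in\tilde{\mathcal{E}}$, one picks $(\xi,R)\in E$ near $\x^+$ (with $\xi\neq x^+$ in general), a second limit point $\xi'$ near $\xi$ in the tangential direction, and a frame $\y^+$ at $\xi$ close to $\x^+$ whose first vector is tangent to the circle through $(\xi,\xi',x^-)$; then $(\y^+,x^-,t_\x)$ and $(\y^+,\xi',t_\x)\in\widetilde{\Omega}$ lie on the same $U$-orbit, so $(\y^+,x^-,t_\x)\in\widetilde{\Omega}U$ approximates $\x$. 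Because the approximating tip is allowed to differ from the target's, minimality — a statement about moving basepoints — is exactly the right tool, and no directional statement at a fixed point is needed. If you replace your reduction to the single orbit $\y N$ by this perturbation of the tip, your argument becomes the paper's proof.
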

\begin{proof}

 Up to conjugation 
by an element of $M$, it is sufficient to prove the proposition in the case where $U$ contains 
the subgroup corresponding to shifting in the direction of the first vector of the frame $\x^+$.
 \medskip

 Consider the subset $E$ of $\F \Lambda_\Gamma$ defined by $(\xi,R) \in E$ if $\xi \in \Lambda_\Gamma$ and there exists
 a sequence $(\xi_n)_{n\geq 0} \subset \Lambda_\Gamma \setminus\{ \xi \}$ such that $\xi_n \to \xi$ tangentially to the 
direction of the first vector of $R$, in the sense that the direction of the geodesic (on the sphere $\partial \H^n$) 
from $\xi$ to $\xi_n$ converges to the
direction of the first vector of $R$.  Clearly, $E$ is a non-empty, $\Gamma$-invariant set. By Proposition \ref{minimalboundaryset},
 it is dense in $\F \Lambda_\Gamma$.
\medskip

 Let $\x$ be a frame in $\tilde{\mathcal{E}}$, we wish to find a frame arbitrarily close to $\x$, which is in the $U$-orbit of $\widetilde{\Omega}$. 
Let $\x=(\x^+,x^-,t_\x)$ be its Hopf coordinates, by assumption $\x^+ \in \F \Lambda_\Gamma$. 
Pick $(\xi,R) \in E$ very close to  $\x^+$. By definition of $E$, there exist $\xi' \in \Lambda$, 
very close to $\xi$ such that the direction $(\xi\xi')$ is close to the first vector of the frame $R$. 
We can find a frame $\y^+\in \F \Lambda_\Gamma$, based at $\xi$, close to $\x^+$, whose first vector is tangent to the circle going through $(\xi,\xi',x^-)$.
 
\begin{figure}[ht!]\label{ }
\begin{center}
\input{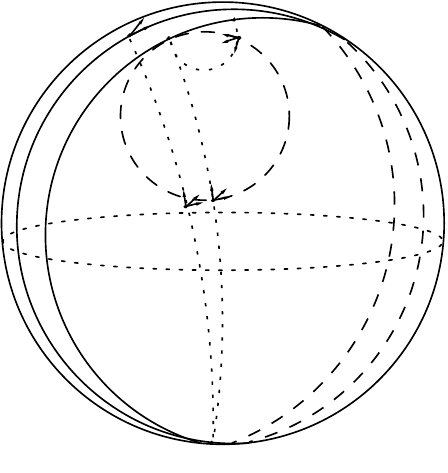_t}
\caption{  } 
\end{center}
\end{figure} 
 \medskip

 By construction, the two frames $\y=(\y^+,x^-,t_\x)$ and $\z=(\y^+,\xi',t_\x)$ belong to the same $U$-orbit; 
notice that $\z\in \widetilde{\Omega}$, so we have $\y\in \widetilde{\Omega}U$. Since $\y^+$ and $\x^+$ are arbitrarily close, so are $\x$ and $\y$.
\end{proof}

\subsection{Proof of Theorem \ref{topologicallytransitive}}
\label{prooftoptrans}

 Let $\mathcal{O},\mathcal{O}' \subset \mathcal{E}$ be non-empty open sets. 
We wish to prove that $\mathcal{O}'U\cap \mathcal{O}U\neq \emptyset$. 
By Proposition \ref{UOmegadense}, $\mathcal{O}\cap \Omega U \neq \emptyset$, 
therefore $\mathcal{O}U \cap \Omega$ is an open nonempty subset of $\Omega$. 
Similarly, $\mathcal{O}'U\cap \Omega\neq \emptyset$.
 
 \medskip

 Let $(\x,  \x u)$ a generic couple given by Lemma \ref{UGeneric}. By density, there exists a $k\geq 0$ such that $(\x a_{-k},  \x ua_{-k}) \in 
(\mathcal{O}U \cap \Omega)\times (\mathcal{O}'U\cap \Omega)$. But since $A$ normalizes $U$, $\x ua_{-k} \in \x a_{-k}U\subset \mathcal{O}U$. 
Therefore 
$\x ua_{-k} \in \mathcal{O}'U\cap \mathcal{O}U$, which is thus non-empty, as required.


\section{Mesurable dynamics}

\label{section4}

\subsection{Measures}

Let us introduce the measures that will play a role here. \\

The {\em Patterson-Sullivan measure on the limit set} is a measure $\nu $ on the boundary, whose support is $\Lambda_\Gamma$, 
which is quasi-invariant under the action of $\Gamma$, and more precisely satisfies 
for all $\gamma\in\Gamma$ and $\nu$-almost every $\xi\in\Lambda_\Gamma$, 
\[\frac{d\gamma_*\nu}{d\nu}(\xi)=e^{-\delta \beta_\xi(o,\gamma o)}\,.
\] 

When $\Gamma$ is convex-cocompact, this measure is proportional to the Hausdorff measure of the limit set \cite{Sullivan84}, 
it is the intuition to keep in mind here. \\

On the unit tangent bundle $T^1\H^d$, let us define a $\Gamma$-invariant measure by 
$$
d\tilde{m}_{BM}(v)=e^{\delta\beta_{v^-}(o,v)+\delta\beta_{v^+}(o,v)}d\nu(v^-)d\nu(v^+)dt^,.
$$ 
By construction, this measure 
is invariant under the geodesic flow 
and induces on the quotient on $T^1\mathcal{M}$ the so-called {\em Bowen-Margulis-Sullivan measure $m_{BMS}$}.
When finite, it is the unique measure
of maximal entropy of the geodesic flow, and is ergodic and mixing. 

On the frame bundle $\mathcal{F}\H^d$ (resp. $\F\M$), there is a unique way 
to define a $M$-invariant lift of the Bowen-Margulis measure, that we will denote by $\tilde{\mu}$ (resp. $\mu$). 
We still call it the {\em Bowen-Margulis-Sullivan measure}. 
On $\F\M$, this measure has support $\Omega$. 
When it is finite, it is ergodic and mixing \cite{Winter}. 
The key point in our proofs will be that it is mixing, and that it is locally equivalent to the product
$d\nu(x^-)d\nu(x^+)\,dt\,dm_\x$, where $dm_\x$ denotes the Haar measure on the fiber of $\pi_1(\x)$, for the fiber bundle $\F\M\to T^1\M$. 
This measure is $MA$-invariant, but not $N$-(or $U$)-invariant, nor even quasi-invariant. \\

The {\em Burger-Roblin measure} is defined locally on $T^1\H^d$ as 
\[d\tilde{m}_{BR}(v)=e^{ (d-1)\beta_{v^-}(o,v)+\delta\beta_{v^+}(o,v)}d\L(v^-)d\nu(v^+)dt\,,\] where $\L$ denotes the Lebesgue 
measure on the boundary $\mathbb{S}^{d-1}=\partial \H^d$, invariant under the stabiliser $K \simeq \SO(d)$ of $o$.  
We denote its $M$-invariant extension to $\F\H^d$ (resp. $\F\M$), still called the {\em Burger-Roblin measure}, by $\tilde{\lambda}$ (resp. $\lambda$). 
This measure is infinite, $A$-quasi-invariant, $N$-invariant. It is $N$-ergodic as soon as $\mu$ is finite.  This has been proven by Winter \cite{Winter}. 
See also \cite{Schapira2015} for a short proof that it is the unique $N$-invariant measure supported in $\mathcal{E}_{rad}$.  

In some proofs,  we will need to use the properties of the conditional measures of $\mu$ on the strong stable leaves of the $A$-orbits, that is 
the $N$-orbits. These conditional measures can easily be expressed as 
$$d\mu_{\x N}(\x n)=e^{\delta\beta_{(\x n)^-}(\x,\x n)}\,d\nu((\x n)^-), 
$$
and the quantity $e^{\delta\beta_{(\x n)^-}(x,\x n)}$ is equivalent to $|n|^{2\delta}$ when $|n|\to +\infty$. 

Observe also that by construction, the measure $\mu_{\x N}$ has full support in the set $\{\y\in \x N,  y^-\in\Lambda_\Gamma\}$. 

Another useful fact is that $\mu_{\x N}$ does not depend really on $\x$ in the sense that it comes from a measure on $\partial \H^n\setminus \{x^+\}$. 
In other terms, if $m\in M$ and $\y\in \x m N$, and $\z\in \x N$ is a frame with $\pi_1(\z)=\pi_1(\y)$, one has
$d\mu_{\x mN}(\y)=d\mu_{\x N}(\z)$.

\subsection{Dimension properties on the measure $\nu$}

Most results in this paper rely on certain dimension properties on $\nu$, allowing to use projection theorems due to Marstrand \cite{MR0063439}, 
and explained in the books of Falconer \cite{MR2118797} and Mattila \cite{MR1333890}.
These properties are easier to check in the convex-cocompact case, relatively easy in the geometrically finite, and more subtle in general, 
under the sole assumption that $\mu$ is finite.\\

Define the dimension of $\nu$, like in \cite{LL}, by
$$\underline{\dim} \, \nu =\textrm{infess}\liminf_{r\to 0}\frac{\log\nu(B(\xi,r))}{\log r}\,.$$

Denote by $g^t$ the geodesic flow on $T^1\mathcal{M}$. For $v\in T^1\mathcal{M}$,  let $d(v,t)$ be the distance between the base point of $g^t v$ and the point $\Gamma.o$. 

Proposition \ref{dim} in the introduction has been established by Ledrappier \cite{Ledrappier-Platon} when $\mu$ is finite.    
It is also an immediate consequence of Proposition \ref{dim-mesfinie} and Lemma \ref{dist} below, as it is well known that when the measure $\mu$ is finite, it is ergodic and conservative. 

\begin{prop}\label{dim-mesfinie}
If $\mu$-almost surely, we have $\frac{d(v,t)}{t}\to 0$, 
 then $\underline{\dim} \, \nu \ge \delta_\Gamma$. 

If $\mu$ is ergodic and conservative, then $\underline{\dim}\,\nu\le\delta_\Gamma$.
\end{prop}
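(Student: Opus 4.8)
The plan is to prove the two inequalities separately, each linking the dimension $\underline{\dim}\,\nu$ to the critical exponent $\delta_\Gamma$ through the shadow lemma and the recurrence/escape behaviour of the geodesic flow encoded in $d(v,t)$.

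For the lower bound, I would start from the quasi-invariance of $\nu$, namely $\frac{d\gamma_*\nu}{d\nu}(\xi)=e^{-\delta\beta_\xi(o,\gamma o)}$, which via the standard Sullivan shadow lemma gives that the $\nu$-measure of a shadow (equivalently of a ball $B(\xi,r)$ of radius comparable to $e^{-t}$ seen from depth $t$ along the geodesic) behaves like $e^{-\delta t}$ up to a multiplicative factor controlled by $d(v,t)$, the distance of $g^tv$ to $\Gamma o$. The key inequality is of the form
\begin{equation*}
\log\nu(B(\xi,r))\le -\delta t + O(d(v,t)),
\end{equation*}
with $r\asymp e^{-t}$. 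Dividing by $\log r\asymp -t$ and taking $\liminf$, the hypothesis $d(v,t)/t\to0$ $\mu$-almost surely makes the error term negligible, yielding $\liminf_{r\to0}\frac{\log\nu(B(\xi,r))}{\log r}\ge\delta_\Gamma$ for $\mu$-almost every $v$ (hence $\nu$-almost every $\xi$, by the correspondence between $\nu$ and the conditionals of $\mu$). Taking the essential infimum gives $\underline{\dim}\,\nu\ge\delta_\Gamma$.

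For the upper bound, I would argue by contradiction or directly by producing, for $\nu$-almost every $\xi$, a sequence of radii $r_n\to0$ along which $\nu(B(\xi,r_n))$ is \emph{not} too small, i.e. $\log\nu(B(\xi,r_n))/\log r_n\le\delta_\Gamma+o(1)$. Here ergodicity and conservativity of $\mu$ are the crucial inputs: conservativity forces the geodesic $(g^tv)$ to return infinitely often near $\Gamma o$, so that $d(v,t_n)$ stays bounded along a sequence $t_n\to+\infty$; along such returning times the shadow lemma gives a two-sided estimate $\nu(B(\xi,e^{-t_n}))\asymp e^{-\delta t_n}$ with a uniformly bounded constant, forcing the ratio down to $\delta_\Gamma$. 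Ergodicity is what guarantees these good return times occur for almost every orbit rather than merely a positive-measure set.

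The main obstacle I anticipate is the careful bookkeeping in the shadow lemma that converts the Busemann-cocycle estimate into a genuine estimate on $\nu(B(\xi,r))$ uniformly in the error term $d(v,t)$, and in particular matching the radius scale $r\asymp e^{-t}$ with the correct additive corrections coming from how far $g^tv$ has drifted from the orbit $\Gamma o$. The lower-bound direction is the delicate one, since one must control $\nu(B(\xi,r))$ for \emph{all} small $r$ (a $\liminf$ statement), so the almost-sure smallness of $d(v,t)/t$ must be leveraged along the entire ray $t\to+\infty$ and not merely along a subsequence; the interplay between the continuous parameter $t$ and the discrete scale of balls is where I would expect to spend the most effort.
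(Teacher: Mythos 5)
Your proposal follows essentially the same route as the paper: the lower bound is obtained by adapting the proof of Sullivan's Shadow Lemma (rather than its statement) to get $\nu(B(\xi,r))\le r^{\delta_\Gamma}e^{2\delta_\Gamma\, d(\xi,-\log r)}$ --- exactly your inequality $\log\nu(B(\xi,r))\le -\delta_\Gamma t+O(d(\xi,t))$ with $r\asymp e^{-t}$ --- after which the sublinear-growth hypothesis kills the error term along the whole ray, and the upper bound comes from the classical Shadow Lemma applied along bounded-return times, which conservativity and ergodicity guarantee for $\nu$-almost every (radial) point. This is correct and matches the paper's argument, including your anticipated difficulty of controlling the shadow estimate uniformly in the drift $d(\xi,t)$.
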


\begin{proof} 
We will come back to the original proof of the Shadow Lemma, of Sullivan, and adapt it (the proof, not the statement) to our purpose. 
The Shadow $\mathcal{O}_o(B(x,R))$ of the ball $B(x,R)$ viewed from $o$ is the set $\{\xi\in\partial\H^d$, $[o\xi)\cap B(x,R)\neq\emptyset\}$. Denote by $\xi(t)$ the point at distance $t$ of $o$ on the geodesic $[o\xi)$. 
It is well known that for the usual spherical distance, a ball $B(\xi,r)$ in the boundary is comparable to a shadow $\mathcal{O}_o(B(\xi(-\log r),R))$. 
More precisely, there exists a universal constant $t_1>0$ such that for all $\xi\in\partial \H^d$ and $0<r<1$, one has 
$$
  \mathcal{O}_o(B(\xi(-\log r+t_1),1))
\subset B(\xi,r)\subset \mathcal{O}_o(B(\xi(-\log r-t_1),1))
$$
Denote by $d(\xi,t)$ the distance $d(\xi(t),\Gamma.o)$. 
By assumption (in the application this will be given by Lemma \ref{dist}), for $\nu$-almost all $\xi\in\partial\H^d$ and $0<r<1$ small enough, 
the quantity $d(\xi,-\log r\pm t_1 )\le t_1+d(\xi,-\log r)$ is negligible compared to $t=-\log r$. 
Let $\gamma\in \Gamma$ be an element minimizing this distance $d(\xi,t)$. It satisfies obviously
$|d(o,\gamma o)-t|\le d(\xi,t)$.
Observe that, by a very naive inclusion, using just $1\le 1+(C+1)d(\xi,t)$, 
$$
\mathcal{O}_o(B(\xi(t-t_1),1)\subset \mathcal{O}_o(B(\gamma.o, 1+d(\xi,t-t_1))
$$

Now, using the $\Gamma$-invariance properties of the probability measure $\nu$, 
and the fact that for $\eta\in \mathcal{O}_o(B(\gamma.o,1+d(\xi,t-t_1))$, the quantity 
$|-\beta_\eta(o,\gamma o)+d(o,\gamma.o)-2d(\xi,t) | $ is bounded by some
universal constant $c$, one can compute  
\begin{eqnarray*}
 \nu(B(\xi,r))&\le& \nu(\mathcal{O}_o(B(\gamma.o,1+d(\xi,t-t_1))))\\
&=&\int_{O_o\left(B(\gamma.o,1+d(\xi,t-t_1))\right)} e^{-\delta_\Gamma\beta_{\eta}(o,\gamma o) }d\gamma_*\nu(\eta)\\
&\le& e^{\delta_\Gamma c}\, e^{-\delta_\Gamma d(o,\gamma o)+ 2\delta_\Gamma d(\xi,t)}\gamma_*\nu(O_o(B(\gamma.o,1+d(\xi,t-t_1))))\\
&\le& e^{\delta_\Gamma c}\, e^{-\delta_\Gamma t+ 2\delta_\Gamma d(\xi,t)}
\end{eqnarray*} 
Recall that $t=-\log r$. 
Up to some universal constants, we deduce that 
\begin{eqnarray}\label{estimee-nu}\nu(B(\xi,r))\le r^{\delta_\Gamma}e^{2\delta_\Gamma \,d(\xi,\log r)}
\end{eqnarray}
It follows immediately that $\underline{\dim} \, \nu\ge \delta_\Gamma$. 

The other inequality follows easily from the classical version of Sullivan's Shadow Lemma, or from the well known fact that $\delta_\Gamma$ is the Hausdorff dimension of the radial limit set, which has full $\nu$-measure. 
\end{proof}
\begin{lem}\label{dist} The following  assertions are equivalent, and hold when $\mu$ is finite.  
\begin{itemize}
\item for $\mu$-a.e. {\rm $\x\in \F M$}, one has
{\rm $$
\lim_{t\to +\infty}\frac{d(\x,\x a_t)}{t}=0\,.
$$
}
\item for $\lambda$-a.e. {\rm $\x\in \F M$}, one has
 {\rm $$
\lim_{t\to +\infty}\frac{d(\x,\x a_t)}{t}=0\,.
$$}
\item for $m_{BM}$ or $m_{BR}$ a.e. $v\in T^1M$, one has
$$
\lim_{t\to +\infty}\frac{d(v,g^tv)}{t}=\lim_{t\to +\infty}\frac{d(v,t)}{t}=0\,.
$$  
\item $\nu$-almost surely, 
$$
\lim_{t\to +\infty} \frac{d(\xi(t),\Gamma.o)}{  t}=\lim_{t\to +\infty} \frac{d(\xi,t)}{  t}=0\,.
$$
\end{itemize}
\end{lem}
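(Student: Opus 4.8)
The plan is to reduce all four assertions to a single statement about the forward endpoint of the geodesic, which is governed by $\nu$, and then to establish that statement from the finiteness of $\mu$. The starting observation is that $d(\x,\x a_t)=d(\pi_1(\x),g^t\pi_1(\x))$ is the distance in $\M$ between the basepoints of $v=\pi_1(\x)$ and $g^tv$, so it is constant along the fibres of $\pi_1\colon\F\M\to T^1\M$. As $\mu$ and $\lambda$ are the $M$-invariant lifts of $m_{BM}$ and $m_{BR}$, we have $(\pi_1)_*\mu=m_{BM}$ and $(\pi_1)_*\lambda=m_{BR}$, and since the integrand is $\pi_1$-invariant the exceptional sets in the first two assertions are exactly the $\pi_1$-preimages of those in the third; this gives the equivalence of the first assertion with the $m_{BM}$-part of the third, and of the second with the $m_{BR}$-part. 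Inside the third assertion the two displayed limits agree because, writing $p,q$ for the basepoints of $v$ and $g^tv$, the triangle inequality gives $|d(v,t)-d(v,g^tv)|=|d(q,\Gamma o)-d(p,q)|\le d(p,\Gamma o)$, a constant which vanishes after division by $t$.

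The core of the remaining equivalences is that the event $\frac{d(v,t)}{t}\to 0$ depends only on the forward endpoint $\xi=v^+$. Indeed the forward ray $(g^tv)_{t\ge0}$ and the ray $[o\xi)$ are asymptotic, so the basepoint of $g^tv$ stays within bounded distance of $\xi(t-\beta_\xi(v,o))$ (the shift synchronising Busemann heights), whence $d(v,t)=d(\xi,t-\beta_\xi(v,o))+O(1)$; dividing by $t$ erases both the shift and the $O(1)$, and the same computation shows the event is $\Gamma$-invariant, so it descends to a property of $\nu$-almost every $\xi$. Now in Hopf coordinates $\tilde m_{BM}$ and $\tilde m_{BR}$ are respectively of the form (a positive density)$\,d\nu(v^-)\,d\nu(v^+)\,dt$ and (a positive density)$\,d\L(v^-)\,d\nu(v^+)\,dt$, so their $v^+$-marginals are locally equivalent to $\nu$; as the event depends only on $v^+$, the phrases ``for $m_{BM}$-a.e.\ $v$'', ``for $m_{BR}$-a.e.\ $v$'' and ``for $\nu$-a.e.\ $\xi$'' define the same null sets. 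This proves all four assertions equivalent, with no finiteness hypothesis.

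It remains to show they hold when $\mu$, equivalently $m_{BM}$, is finite, in which case $m_{BM}$ is ergodic and conservative; I would prove the $m_{BM}$-form of the third assertion directly. The function $\psi(v)=\limsup_{t\to\infty}\frac{d(v,t)}{t}$ is $g^t$-invariant, hence $m_{BM}$-a.e.\ equal to a constant $\beta$; suppose $\beta>0$. Since $m_{BM}$ is a finite (probability) measure, choose a compact $K\subset\M$ with $m_{BM}(T^1K)>1-\beta/2$, and set $D_K=\sup_{x\in K}d(x,\Gamma o)<\infty$. For a.e.\ $v$ the Birkhoff theorem gives $\frac1T\int_0^T\mathbf 1_{T^1K}(g^sv)\,ds\to m_{BM}(T^1K)$. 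Pick $t_n\to\infty$ with $d(v,t_n)\ge(\beta/2)t_n$ and let $a_n\le t_n$ be the last time before $t_n$ at which $g^{a_n}v\in T^1K$; the basepoint being at distance $\le D_K$ from $\Gamma o$ at time $a_n$ and moving at unit speed forces
\[
(\beta/2)t_n\le d(v,t_n)\le D_K+(t_n-a_n),
\]
so the orbit avoids $T^1K$ on $[a_n,t_n]$ and the time-fraction spent in $T^1K$ up to $t_n$ is at most $a_n/t_n\le 1-\beta/2+D_K/t_n$. Letting $n\to\infty$ contradicts the Birkhoff limit $>1-\beta/2$, so $\beta=0$, which is the third assertion; the others follow by the equivalences above.

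The main obstacle is this last step: converting the soft recurrence information supplied by Birkhoff's theorem --- that the orbit spends a definite proportion of its time near $\Gamma o$ --- into the quantitative sublinear escape bound. The geometric fact that makes it go through is that every excursion away from $T^1K$ must begin at bounded distance from $\Gamma o$ and proceed at unit speed, so a deep excursion is necessarily long; this caps the attainable time-fraction inside $T^1K$ from below by $1-\limsup_t d(v,t)/t$, which is exactly the tension exploited above.
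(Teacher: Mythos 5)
Your proof is correct, and while the equivalence portion is essentially the paper's, the finiteness step takes a genuinely different route. On the equivalences: the authors likewise reduce along the compact fibres of $\F\M\to T^1\M$, observe that the limit depends only on the stable leaf (hence only on $v^+$), and use the product structure of the Hopf-coordinate densities to pass between $m_{BM}$, $m_{BR}$ and $\nu$; your reorganization of these reductions is cosmetic. Where you diverge is the proof that the property holds when $\mu$ is finite. The paper applies the Birkhoff theorem to the bounded cocycle increment $f(v)=d(v,1)-d(v,0)$ (bounded because the geodesic flow is $1$-Lipschitz), so that $d(v,t)/t\to\int f\,d\mu$ almost everywhere, and the whole difficulty becomes showing $\int f\,d\mu=0$: since $d(v,0)$ is not known to be $\mu$-integrable one cannot simply invoke flow-invariance, so the authors truncate to $B_N=T^1B(o,N+1)$, bound $\left|\int_{B_N}f\,d\mu\right|\le (N+2)(a_N+a_{N+1})$ where $a_n=\mu(K_n)$ is the measure of the $n$-th annulus, and extract a subsequence along which this tends to $0$. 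You instead show directly that the a.e.-constant value $\beta$ of the flow-invariant function $\limsup_{t}d(v,t)/t$ vanishes, using tightness of the finite measure to choose $T^1K$ of measure $>1-\beta/2$, the Birkhoff theorem for $\mathbf{1}_{T^1K}$, and the last-exit observation that a deep excursion must be long. Both arguments rest on ergodicity of the finite Bowen-Margulis-Sullivan measure; yours is softer and sidesteps the integrability issue that is the heart of the paper's computation, at the price of not identifying the limit as a drift $\int f\,d\mu$ — irrelevant here, since $\limsup=0$ forces the limit to exist and vanish. One pedantic point: for small $n$ the last visit time $a_n$ may fail to exist, but then the time fraction spent in $T^1K$ up to $t_n$ is $0$ and the same contradiction applies, so nothing is lost.
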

When $\Gamma$ is geometrically finite, a much better estimate is known thanks to Sullivan's logarithm law (see \cite{MR688349}, \cite{MR1327939}, \cite[Theorem 5.6]{MR2732978}), since the distance grows typically in a logarithmic fashion. However, this may not hold for geometrically infinite manifolds with finite $\mu$. In any case, the above sublinear growth is sufficient for our purposes.  

\begin{proof}  First, observe that all statements are equivalent.
Indeed, first, as $m_{BR}$ and $m_{BM}$ differ only by their conditionals on stable leaves, and the limit 
$d(v,g^t v)/t$ when $t\to +\infty$ depends only on the stable leaf $W^{ss}(v)$, this property holds (or not) equivalently for $m_{BR}$ and $m_{BM}$. 

Moreover, as $\F M$ is a compact extension of $T^1M$, this property holds (or not) equivalently for 
$\lambda$ on $\F M$ and $m_{BR}$ on $T^1M$ or $\mu$ on $\F M$ and $m_{BM}$ on $T^1M$. 

As this limit depends only on the endpoint $v^+$ of the geodesic, and not really on $v$, 
the product structure of $m_{BR}$ implies that this property holds true equivalently for $m_{BM}$-a.e. $v\in T^1M$ and $\nu$ almost surely on the boundary. 

Let us prove that all these equivalent properties indeed hold when $\mu$ is finite. 

Let $f(v)=d(v,1)-d(v,0)$. As the geodesic flow is $1$-lipschitz, this map is bounded, and therefore
$\mu$-integrable. Thus, $\frac{S_n f}{n}$ converges a.s. to $\int f\,d\mu$, and therefore 
$d(v,t)/t\to \int f\,d\mu$, $\mu$-a.s. 

It is now enough to show 
that  this integral is $0$. This would be obvious if we knew that the distance $d(v,0)$ is $\mu$-integrable. 

Divide $\Omega$ in annuli   $K_n=\{v\in T^1\mathcal{M},d(\pi(v),o)\in   (n,n+1)\}$, and set $B_n=T^1B(o,n+1)$.
If $a_n=\mu(K_n)$, we have $\sum_n a_n=1$. 

Observe that $\int f\,d\mu=\lim_{n\to \infty}\int_{B_n} f\,d\mu$.

It is enough to find a sequence $n_k\to +\infty $ such that these integrals are arbitrarily small.
Observe that 
\[
\int_{B_N} f(x)d\mu(x)=\int_ {g^1(B_N) } d(v,0)d\mu- \int_{B_N} d(v,0)d\mu
\]
But now, the symmetric difference between $g^1B_N$ and $B_N$ is included in $K_N\cup   K_{N+1}$. 
As $d(v,0)\le N+2$ in this union, we get 
\[
\left| \int_{B_N} f(x)d\mu(x) \right| \leq (N+2)(a_N+a_{N+1}).
\]
As $\sum a_n=1$, there exists a subsequence $n_k\to +\infty$, such that 
$(n_k+2)(a_{n_k}+a_{n_k+1})\to 0$. 
This proves the lemma. 
\end{proof}

\subsection{Energy of the measure $\nu$}

The $t$-energy of $\nu$ is defined as 
$$
I_t(\nu)=\int\int_{\Lambda^2}\frac{1}{|\xi-\eta|^{t}}d\nu(\xi)\,d\,\nu(\eta) \, .
$$
The finiteness of a $t$-energy is sufficient to get the absolute continuity of the projection of $\nu$ on almost every $k$-plane of dimension $k<t$.
However, a weaker form of finiteness of energy will be sufficient for our purposes, namely


\begin{lem}\label{energie-finie} For all $t<\underline{\dim} \,\nu$, there exists an increasing  sequence $(A_k)_{k\geq 0}$ such that $I_t(\nu_{|A_k})<\infty$, and $\nu(\cup_k A_k)=1$.
\end{lem}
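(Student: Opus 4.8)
The plan is to use the definition $\underline{\dim}\,\nu=\operatorname{infess}\liminf_{r\to 0}\frac{\log\nu(B(\xi,r))}{\log r}$ to produce, for each fixed $t<\underline{\dim}\,\nu$, a set of full measure on which the local dimension exceeds $t$, and then to exhaust that set by pieces of finite energy. First I would fix a number $s$ with $t<s<\underline{\dim}\,\nu$. By the very definition of the essential infimum of the $\liminf$, the set
\[
\Lambda_s=\Bigl\{\xi\in\Lambda:\liminf_{r\to 0}\frac{\log\nu(B(\xi,r))}{\log r}>s\Bigr\}
\]
has full $\nu$-measure. For $\xi\in\Lambda_s$ there exists $r(\xi)>0$ such that $\nu(B(\xi,r))\le r^{s}$ for all $0<r<r(\xi)$. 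The natural thing is therefore to stratify $\Lambda_s$ according to the size of $r(\xi)$: set
\[
A_k=\{\xi\in\Lambda_s:\nu(B(\xi,r))\le r^{s}\text{ for all }0<r\le 1/k\}.
\]
The sequence $(A_k)$ is increasing, and since every $\xi\in\Lambda_s$ lies in some $A_k$ (take $k>1/r(\xi)$), we have $\bigcup_k A_k=\Lambda_s$, hence $\nu(\bigcup_k A_k)=\nu(\Lambda_s)=1$, as required.

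It remains to check that each restricted measure $\nu_{|A_k}$ has finite $t$-energy. I would estimate $I_t(\nu_{|A_k})$ by decomposing the inner integral dyadically in the distance $|\xi-\eta|$. For a fixed $\xi\in A_k$ the uniform bound $\nu(B(\xi,r))\le r^{s}$ (valid for $r\le 1/k$, the remaining range being harmless since $\Lambda$ is bounded) gives, via the standard layer-cake / dyadic-annulus argument,
\[
\int_{A_k}\frac{d\nu(\eta)}{|\xi-\eta|^{t}}\le \int_{A_k}\frac{d\nu_{|A_k}(\eta)}{|\xi-\eta|^{t}}\lesssim \sum_{j\ge j_0}2^{jt}\,\nu\bigl(B(\xi,2^{-j})\bigr)\lesssim\sum_{j\ge j_0}2^{jt}2^{-js},
\]
where $j_0\asymp\log_2 k$. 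Since $t<s$ the geometric series $\sum_j 2^{j(t-s)}$ converges, yielding a bound independent of $\xi$. Integrating this uniform bound in $\xi$ against the finite measure $\nu_{|A_k}$ then gives $I_t(\nu_{|A_k})<\infty$.

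The only genuine subtlety — what I would treat as the main point rather than an obstacle — is to be careful that the pointwise radius threshold $r(\xi)$ is not uniform over $\Lambda_s$, which is precisely why one cannot bound $I_t(\nu)$ directly and must pass to the sets $A_k$: on $A_k$ the inequality $\nu(B(\xi,r))\le r^{s}$ holds uniformly for all $r\le 1/k$, and it is this uniformity that makes the dyadic sum above converge with a constant depending only on $k$, not on $\xi$. Everything else is the routine energy estimate from the dimension bound; the key idea is simply that finiteness of energy is a global, uniform requirement whereas the dimension is an almost-everywhere pointwise notion, and the exhaustion $(A_k)$ is the standard device that converts the latter into the former.
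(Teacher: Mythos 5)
Your proof is correct, and it runs on the same engine as the paper's: the pointwise bound $\nu(B(\xi,r))\le r^{s}$ for some $s\in(t,\underline{\dim}\,\nu)$, valid $\nu$-almost everywhere for small $r$, fed into a dyadic estimate of the Riesz potential $\Phi(\xi)=\int|\xi-\eta|^{-t}\,d\nu(\eta)$. Where you differ is in the choice of exhausting sets, which is the actual content of the lemma. The paper first deduces that $\Phi(\xi)<\infty$ for $\nu$-a.e.\ $\xi$ (via the layer-cake identity $\Phi(\xi)=t\int_0^\infty \nu(B(\xi,r))r^{-t-1}\,dr$, of which your dyadic-annulus sum is the discrete form; note the exponent in the paper's display is a typo) and then takes the sublevel sets $A_M=\{\xi:\Phi(\xi)\le M\}$, for which $I_t(\nu_{|A_M})\le M\,\nu(A_M)<\infty$ is immediate. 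You instead stratify by the uniformity radius, $A_k=\{\xi:\nu(B(\xi,r))\le r^{s}\ \forall\, 0<r\le 1/k\}$, and re-run the dyadic sum to get a bound on the restricted potential that is \emph{uniform} on $A_k$. The two are logically equivalent — your computation in effect shows $A_k\subset\{\xi:\int_{A_k}|\xi-\eta|^{-t}\,d\nu_{|A_k}(\eta)\le C(k)\}$ — and each buys something small: the paper's sets are automatically measurable (sublevel sets of a function measurable by Tonelli) and make the finite-energy claim a one-liner, while yours give a uniform potential bound on each piece, a marginally stronger and sometimes more usable conclusion. Two pedantic points you glossed over: measurability of your $A_k$ needs the remark that, by monotonicity of $r\mapsto\nu(B(\xi,r))$, the condition over all $r\le 1/k$ can be checked over rational radii; and your far-field bound uses finiteness of $\nu$ (harmless, as $\nu$ is a probability measure with bounded support).
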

\begin{proof} When $t<\dim\nu$, choose some $t<t'<\dim\nu$. One has, for $\nu$-almost all $x$, and $r$ small enough, $\nu(B(x,r))\le Cst. r^{t'}$. 
It implies the convergence of the integral 
$$\int_{\Lambda}\frac{1}{|\xi-\eta|^{t}}d\,\nu(\eta)=t\int_0^\infty\frac{\nu(B(\xi,r))}{r^{t-1}}\,dr<\infty
$$
Therefore, the sequence of sets 
$A_M=\{x\in\partial\H^n,\int_{\Lambda_\Gamma}\frac{1}{|\xi-\eta|^{t}}d\,\nu(\eta)\le M\}$ is an increasing sequence whose union has full measure. 
And of course, $I_t(\nu_{|A_M})<\infty$. 
\end{proof}

 It is interesting to know when the following stronger assumption of finiteness of energy is satisfied. In \cite{MO}, when $\dim N=2$ and $\dim U=1$,  Mohammadi and Oh used  the following:

\begin{lem} If $\Gamma$ is convex-cocompact and $\delta>d-1-\dim U$ then $I_{d-1-\dim U}(\nu)<\infty$. 
\end{lem}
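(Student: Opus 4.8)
The plan is to reduce the double integral defining $I_t(\nu)$, where I write $t=d-1-\dim U$ for brevity, to a one-variable estimate on the $\nu$-measure of small balls, and then to feed in the Ahlfors regularity of $\nu$ that is available in the convex-cocompact setting. First I would fix $\xi\in\Lambda_\Gamma$ and rewrite the inner integral by the layer-cake (distribution function) formula: since $s\mapsto s^{-t}$ is decreasing, Fubini gives
$$\int_{\Lambda_\Gamma}\frac{d\nu(\eta)}{|\xi-\eta|^{t}}=t\int_0^{\infty}\frac{\nu(B(\xi,r))}{r^{t+1}}\,dr.$$
Because $\Lambda_\Gamma$ has finite diameter $D$, the tail $r>D$ contributes at most $\|\nu\|\,D^{-t}$, so the only issue is the behaviour of the integrand as $r\to 0$.

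The key input is that when $\Gamma$ is convex-cocompact the Patterson-Sullivan measure is Ahlfors $\delta$-regular; in particular there is a constant $C>0$ such that $\nu(B(\xi,r))\le C\,r^{\delta}$ for every $\xi\in\Lambda_\Gamma$ and $0<r\le D$. This upper bound is exactly what the estimate \eqref{estimee-nu} produces once one observes that, in the convex-cocompact case, every geodesic ray towards a point of $\Lambda_\Gamma$ stays within bounded distance of the orbit $\Gamma.o$ (the convex core being compact modulo $\Gamma$), so that the distance term $d(\xi,-\log r)$ appearing there is bounded uniformly in $\xi$ and $r$. Inserting this bound into the layer-cake formula, the small-$r$ part is dominated by $tC\int_0^{D}r^{\delta-t-1}\,dr$, which converges precisely when $\delta-t-1>-1$, i.e. exactly under the hypothesis $\delta>t=d-1-\dim U$.

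Combining the two ranges of $r$, I obtain a bound $\int_{\Lambda_\Gamma}|\xi-\eta|^{-t}\,d\nu(\eta)\le C'$ that is \emph{uniform} in $\xi\in\Lambda_\Gamma$. Integrating this against $d\nu(\xi)$ and using that $\nu$ is a finite measure then yields $I_t(\nu)\le C'\|\nu\|<\infty$, as claimed. I do not foresee any serious obstacle here: the whole argument rests on the uniform upper regularity $\nu(B(\xi,r))\le C r^{\delta}$, which is the classical consequence of Sullivan's Shadow Lemma for convex-cocompact groups and is recovered from \eqref{estimee-nu} above. The only point requiring a little care is the bookkeeping at the two ends of the $r$-integral, together with the verification that $\delta>d-1-\dim U$ is exactly the convergence threshold of $\int_0 r^{\delta-t-1}\,dr$.
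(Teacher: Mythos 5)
Your proof is correct and follows essentially the same route as the paper's: both arguments reduce everything to the uniform upper bound $\nu(B(\xi,r))\le C\,r^{\delta}$, obtained from Sullivan's Shadow Lemma together with the fact that convex-cocompactness keeps the geodesic ray $[o\xi)$ at bounded distance from $\Gamma o$ (so the distance term in the shadow estimate is uniformly bounded), the only cosmetic difference being that you evaluate the inner integral by the layer-cake formula $t\int_0^\infty \nu(B(\xi,r))\,r^{-t-1}\,dr$ while the paper sums over dyadic annuli $A_k$. In both cases the resulting bound is uniform in $\xi$, and the convergence threshold $\delta>d-1-\dim U$ appears identically, so the proposal matches the paper's proof in substance.
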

 

\begin{proof} For $\xi\in\Lambda_\Gamma$, 
 define $A_k=\{\eta\in\partial \H^d,\,|\xi-\eta|\in ]2^{-k-1},2^{-k}]\}$, and compute
\[
\int_{\Lambda_\Gamma}\frac{1}{|\xi-\eta|^{\dim N-\dim U}}d\,\nu(\eta) \simeq  \sum_{k\in \N^*}  2^{k(\dim N-\dim U)}\nu(A_k)
\]
Denote by $\xi_{k\log 2}$ the point at distance $k\log 2$ of $o$ on the geodesic ray $[o\xi)$. 
As $\Gamma$ is convex-cocompact, $\Omega$ is compact, so that  $\xi_{k\log 2}$ 
is at bounded distance from $\Gamma o$. Sullivan' Shadow lemma implies that, 
up to some multiplicative constant, 
 $\nu(A_k)\le \nu(B(\xi,2^{-k}))\le Cst. 2^{-k\delta}$. 
We deduce that, up to multiplicative constants, 
\begin{eqnarray*}
\int_{\Lambda_\Gamma}\frac{1}{|\xi-\eta|^{\dim N-\dim U}} d\,\nu(\eta) &\le &  \sum_{k } 2^{k(\dim N-\dim U-(1-\varepsilon)k\delta)}
\end{eqnarray*}
If $\delta>\dim N-\dim U$, for $\varepsilon>0$ small enough, the above series converges, uniformly in $\xi\in\Lambda_\Gamma$, so that the integral 
$\int\int_{\Lambda_\Gamma^2}\frac{1}{|\xi-\eta|^{\dim N-\dim U}} d\,\nu(\eta)d\,\nu(\xi)$ is finite, 
 and the Lemma is proven. 
\end{proof}

 As mentioned before, the reason we have to be interested in these energies is the following version of Marstrand's projection theorem, see for example \cite[thm 9.7]{MR1333890}. 

\begin{theo} \label{projL2}
Let $\nu_1$ be a finite measure with compact support in $\R^m$, such that $I_t(\nu_1)<\infty$, for some $0<t<m$. 
For all integer $k<t$, and almost all $k$-planes $P$ of $\R^m$, the orthogonal projection $(\Pi_P)_*\nu_1$
of $\nu_1$ on $P$ is absolutely continuous w.r.t. the $k$-dimensional Lebesgue measure of $P$. Moreover, 
its Radon-Nikodym derivative satisfies the following inequality
$$
\int_{\mathcal{G}_k^m} \int_P \left(\frac{d (\Pi_P)_*\nu_1}{d\mathcal{L}_P}\right)^2 d\mathcal{L}_P d\sigma_k^m < c. I_k(\nu_1)
 $$
where $\sigma_k^m$ is the natural measure on the Grassmannian $\mathcal{G}_k^m$, invariant by isometry, and $c$ some constant depending only on $k$ and $m$.  
\end{theo}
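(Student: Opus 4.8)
The plan is to prove this by Fourier analysis, following the classical scheme behind Marstrand-type projection theorems. The organizing principle is the $L^2$-criterion for absolute continuity: a finite Borel measure $\sigma$ on a Euclidean space is absolutely continuous with an $L^2$-density if and only if its Fourier transform is square-integrable, in which case Plancherel gives $\int (d\sigma/d\mathcal{L})^2\,d\mathcal{L}$ as a constant multiple of $\int |\hat\sigma|^2$. So I would reduce the whole statement to a single $L^2$-estimate on $\hat\nu_1$, averaged over the Grassmannian.

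First I would compute the Fourier transform of the projected measure. Identifying a plane $P\in\mathcal{G}_k^m$ with $\R^k$, one has $\langle \Pi_P x,\xi\rangle=\langle x,\xi\rangle$ for every $\xi\in P$, whence
\[
\widehat{(\Pi_P)_*\nu_1}(\xi)=\int_{\R^m}e^{-i\langle x,\xi\rangle}\,d\nu_1(x)=\hat\nu_1(\xi),\qquad \xi\in P.
\]
Thus the Fourier transform of the projection is just the restriction of $\hat\nu_1$ to $P$, and the quantity to control is exactly $\int_P|\hat\nu_1|^2\,d\mathcal{L}_P$. Next I would integrate over the Grassmannian. The measure $\Theta:=\int_{\mathcal{G}_k^m}\mathcal{L}_P\,d\sigma_k^m(P)$ on $\R^m$ is invariant under the orthogonal group, and a scaling argument (the pushforward of $\mathcal{L}_P$ under $\xi\mapsto\lambda\xi$ is $\lambda^{-k}\mathcal{L}_P$) shows that its density with respect to $m$-dimensional Lebesgue measure is radial and homogeneous of degree $k-m$, hence equal to $c\,|\xi|^{k-m}$. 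By Tonelli (everything is nonnegative),
\[
\int_{\mathcal{G}_k^m}\int_P|\hat\nu_1(\xi)|^2\,d\mathcal{L}_P(\xi)\,d\sigma_k^m(P)=c\int_{\R^m}|\hat\nu_1(\xi)|^2|\xi|^{k-m}\,d\xi.
\]

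Finally I would invoke the Riesz-kernel identity: the distributional Fourier transform of $|x|^{-k}$ on $\R^m$ is a constant times $|\xi|^{k-m}$, which yields $\int_{\R^m}|\hat\nu_1|^2|\xi|^{k-m}\,d\xi=c'\,I_k(\nu_1)$. Since $\nu_1$ is compactly supported and $k<t$, one has $I_k(\nu_1)\le (\mathrm{diam}\,\supp\nu_1)^{\,t-k}\,I_t(\nu_1)<\infty$, so the averaged integral above is finite. Consequently $\int_P|\hat\nu_1|^2\,d\mathcal{L}_P<\infty$ for $\sigma_k^m$-almost every $P$; by the Plancherel criterion this forces $(\Pi_P)_*\nu_1$ to be absolutely continuous with an $L^2$-density for almost every $P$, and combining the displayed identities with $\int_P(d(\Pi_P)_*\nu_1/d\mathcal{L}_P)^2\,d\mathcal{L}_P=(2\pi)^{-k}\int_P|\hat\nu_1|^2\,d\mathcal{L}_P$ gives the stated inequality with a constant $c$ depending only on $k$ and $m$.

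The steps needing the most care are the two ``soft'' identities rather than the formal manipulations. The first is the Plancherel criterion for measures, namely that square-integrability of $\hat\sigma$ genuinely produces an $L^2$-density; this is handled by mollifying $\sigma$, applying Plancherel to the smooth approximants, and passing to the limit by Fatou and weak-$*$ convergence. The second is pinning down $\Theta=c\,|\xi|^{k-m}\,d\xi$, where one must combine rotation invariance with the homogeneity computation and separately check that the constant is finite and nonzero. The Riesz-kernel Fourier identity is standard but must be used in its regularized, tempered-distribution form, since $|x|^{-k}$ is only locally integrable precisely because $k<m$; all interchanges of integration are legitimate by nonnegativity.
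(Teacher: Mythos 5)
Your proof is correct, and it is worth noting at the outset that the paper itself contains no proof of this statement: Theorem \ref{projL2} is quoted as a known result, with a pointer to \cite[Theorem 9.7]{MR1333890}, so the only meaningful comparison is with the proof in that cited source. Mattila's proof there is Fourier-free: it rests on the Grassmannian ``tube'' estimate $\sigma_k^m\{P : |\Pi_P z|\le \delta\}\le c\,(\delta/|z|)^k$, Fatou's lemma, and the differentiation theory of measures --- one integrates the lower derivative $\liminf_{\delta\to 0}(2\delta)^{-k}\,(\Pi_P)_*\nu_1(B(u,\delta))$ against $(\Pi_P)_*\nu_1$ and over $\mathcal{G}_k^m$, bounds the result by $c\,I_k(\nu_1)$ via the tube estimate, and concludes both absolute continuity (finiteness of the lower derivative almost everywhere) and the $L^2$ bound. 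Your route is the other classical proof of the same statement (the one found, e.g., in Fourier-analytic treatments of projection theorems): the restriction identity $\widehat{(\Pi_P)_*\nu_1}=\widehat{\nu_1}$ on $P$, the identification of $\int_{\mathcal{G}_k^m}\mathcal{L}_P\,d\sigma_k^m(P)$ with $c\,|\xi|^{k-m}\,d\xi$ by rotation invariance plus $k$-homogeneity, the Riesz-kernel identity $\int_{\R^m}|\widehat{\nu_1}(\xi)|^2|\xi|^{k-m}\,d\xi = c'\,I_k(\nu_1)$, and the $L^2$ Plancherel criterion via mollification are all sound, as is the reduction $I_k(\nu_1)\le(\mathrm{diam}\,\mathrm{supp}\,\nu_1)^{t-k}\,I_t(\nu_1)$ showing the right-hand side is finite under the hypothesis $I_t(\nu_1)<\infty$ with $k<t$. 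The trade-off: your argument is shorter and makes the provenance of the constant $c(k,m)$ transparent (it assembles from the Riesz transform constant and the Grassmannian averaging constant), but it needs the tempered-distribution form of the Fourier transform of $|x|^{-k}$ and the Plancherel criterion for measures; the cited proof uses only elementary measure theory plus one geometric estimate on $\sigma_k^m$, which is why it is the standard textbook route. For the way the theorem is used in the paper (where the $L^2$ densities are subsequently fed into the Hardy--Littlewood maximal inequality), both proofs deliver exactly the needed statement.
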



\subsection{Conservativity/ Dissipativity of $\lambda$}

In this section, we aim to prove Theorem \ref{nonergodicity}.

The measure $\lambda$ is $N$-invariant (and $N$-ergodic), 
therefore, $U$-invariant for all unipotent subgroups $U<N$. 

It is $U$-conservative iff for all sets $E\subset \F\M$ with positive measure, 
and $\lambda$-almost all frames $\x\in\F\M$, the integral $\int_0^\infty {\bf 1}_E(\x u)du$ diverges,
 where $du$ is the Haar measure of $U$. 
In other words, it is conservative when it satisfies the conclusion of Poincar\'e recurrence theorem 
(always true for a finite measure). 

It is $U$-dissipative iff for all sets $E\subset \F\M$ with positive finite measure, 
and $\lambda$-almost all frames $\x\in\F\M$, the integral $\int_0^\infty {\bf 1}_E(\x u)du$ converges. 

A measure supported by a single orbit can be both ergodic and dissipative. In other cases, ergodicity implies conservativity \cite{Aar}. 
Therefore, Theorem \ref{ergodicity} implies that when the Bowen-Margulis-Sullivan measure is finite, and $\delta_\Gamma>\dim N-\dim U=d-1-\dim U$, 
the Burger-Roblin measure $\lambda$ is $U$-conservative. 

In the case $\delta_\Gamma<\dim N-\dim U$,   we prove below (Theorem \ref{divergence}) that the measure
$\lambda$ is $U$-dissipative.  
Unfortunately, our method  does not work   in the case $\delta_\Gamma=\dim N-\dim U$. 
We refer to works of Dufloux \cite{Dufloux2016} and \cite{Dufloux2017}
for the proof that 
\begin{itemize}
\item When $\mu$ is finite and $\Gamma$ Zariski dense, the measure $\mu$ is $U$-dissipative iff $\delta_\Gamma\le \dim N-\dim U$\\
\item When moreover $\Gamma$ is convex-cocompact, if $\delta_\Gamma=\dim N-\dim U$, then $\lambda$ is $U$-conservative. 
\end{itemize}


\begin{theo}\label{divergence} Let $\Gamma$ be a discrete Zariski dense subgroup of $G={\SO}_o(d,1)$  group and $U<G$ a unipotent subgroup. If $\delta<d-1-\dim U$, then for all compact sets 
$K\subset \F\M$ and $\lambda$-almost all {\rm $\x\in \F M$} the time spent by {\rm$ \x  U$} in $K$ is finite. 
\end{theo}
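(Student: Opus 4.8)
The plan is to bound the total time $\int_U \mathbf 1_K(\x u)\,du$ (with $du$ the Haar measure on $U$) by a sum of hyperbolic orbit–counting contributions, and then to show that this sum is finite for $\lambda$-almost every $\x$ by averaging over the compact fibre $M$ (on which $\lambda$ is invariant) and invoking the dimension bound $\underline{\dim}\,\nu=\delta$.

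First I would pass to the universal cover and work in horospherical coordinates based at $\xi=\x^+\in\Lambda_\Gamma$. Since $U\subset N$ preserves the strong stable horosphere $H_\xi$ through the basepoint of $\x$, all frames $\x u$ ($u\in U$) have basepoints on $H_\xi$ at a fixed Busemann height, and their backward endpoints sweep out an affine $k$-plane $P=\eta_0+V$ in $\partial\H^d\setminus\{\xi\}\cong\R^{d-1}$, where $V\cong\R^{k}$ is the subspace attached to $U$ and $\eta_0=x^-$. Identifying Haar measure on $U$ with $k$-dimensional Lebesgue measure $\L_V$ on $P$, the event $\x u\in K$ forces the basepoint of $\x u$ to lie within a bounded distance $R=R(K)$ of the orbit $\Gamma o$. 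Estimating $\mathbf 1_K(\x u)\le\sum_{\gamma\in\Gamma}\mathbf 1[\,d(\mathrm{bp}(\x u),\gamma o)\le R\,]$ and integrating, each $\gamma$ contributes the $\L_V$-measure of the slice $P\cap B(\gamma o,R)$; a hyperbolic $R$-ball meets $H_\xi$ only when the height $h_\gamma$ of $\gamma o$ is comparable to that of $\x$, and then the slice is a Euclidean ball of radius comparable to the common height. After normalising the height to $1$ by a dilation this yields
\[
\int_U\mathbf 1_K(\x u)\,du\ \lesssim\ \#\{\gamma\in\Gamma:\ h_\gamma\asymp 1,\ \mathrm{dist}(a_\gamma,P)\lesssim 1\},
\]
where $a_\gamma$ is the horospherical projection of $\gamma o$, so it suffices to show this count is finite for $\lambda$-a.e. $\x$.

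The key point is that the relative direction of $V$ is governed by the $M$-coordinate of $\x$, on which $\lambda$ is invariant: writing $\x=\x_0 m$, the rotation $m\in M=\SO(d-1)$ fixes $\eta_0$ and turns $P$ into $\eta_0+mV$, the push-forward of Haar measure on $M$ being the invariant measure on the Grassmannian of $k$-planes through $\eta_0$. I would therefore bound the $M$-average of the count. For fixed $\gamma$ with $h_\gamma\asymp 1$, a random $k$-plane through $\eta_0$ passes within distance $\lesssim 1$ of $a_\gamma$ with probability $\asymp|a_\gamma-\eta_0|^{-(d-1-k)}$ (it must align within angle $\asymp 1/|a_\gamma-\eta_0|$ in each of the $d-1-k$ transverse directions). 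Grouping the $\gamma$ by dyadic spherical scale $2^{-j}$ of $a_\gamma$ around $\xi$, the generalised Shadow Lemma estimate \eqref{estimee-nu} together with $\underline{\dim}\,\nu=\delta$ (Proposition \ref{dim}) gives $2^{\,j\delta+o(j)}$ relevant $\gamma$ at scale $j$, each with $|a_\gamma-\eta_0|\asymp 2^{j}$. Hence the $M$-average is bounded by
\[
\sum_{j\ge 0} 2^{\,j\delta+o(j)}\,2^{-j(d-1-k)}\ =\ \sum_{j\ge 0} 2^{\,j(\delta+\dim U-(d-1))+o(j)},
\]
which converges precisely because $\delta+\dim U<d-1$. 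By Tonelli, finiteness of this average over $M$ gives finiteness of the count for a.e. $m$, hence for $\lambda$-a.e. $\x$; keeping the underlying vector fixed and averaging only over its $M$-fibre yields the stronger convex-cocompact statement.

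The sub-exponential factor $2^{o(j)}$ is harmless at radial generic $\xi$, where the drift $d(\xi,t)/t\to 0$ (Lemma \ref{dist}) makes the correction term in \eqref{estimee-nu} negligible; since $\mu$ is finite this holds for $\nu$-a.e. $\xi$, and the radial points carry full measure, so the estimate is valid on a set of full $\lambda$-measure. The hard part will be the geometric dictionary of the first step: turning $\x u\in K$ into a clean orbit-count requires carefully matching the hyperbolic ball radius $R$, the Euclidean slice size on $H_\xi$, and the spherical scale near $\xi$, and checking that the normalising dilation interacts correctly with the height of $\x$. Once the count $\#\{\gamma:h_\gamma\asymp1,\ \mathrm{dist}(a_\gamma,P)\lesssim1\}$ is in hand, the probabilistic averaging over $M$ and the convergence of the dyadic series are comparatively routine.
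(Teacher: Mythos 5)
Your proposal is correct in substance and its analytic core coincides with the paper's: after the first reduction, both arguments average over the fibre $M$, use the same Grassmannian transversality estimate (a random $k$-plane through $\eta_0$ comes within distance $1$ of a point at distance $2^j$ with probability $\asymp 2^{-j(d-1-\dim U)}$, which is exactly the paper's computation of $\int_M {\bf 1}_{\{mUm^{-1}\cap nN_r\neq\emptyset\}}\,dm \simeq (r/|n|)^{\dim N-\dim U}$ in Lemma \ref{integrale-finie}), decompose dyadically, invoke the corrected shadow estimate (\ref{estimee-nu}) with the sublinear drift of Lemma \ref{dist} to absorb the $2^{o(j)}$ factor, and obtain the same series $\sum_j 2^{j(\delta+\dim U-(d-1))+o(j)}$. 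Where you genuinely diverge is the first reduction. The paper never counts orbit points: Lemma \ref{comparison-noncompact} converts the time $\int_U {\bf 1}_K(\x u)\,du$ into the conditional Bowen--Margulis mass $\mu_{\x N}(\x U N_{2r})$ by a Vitali-type covering, using that $\z\mapsto \mu_{\z N}(\z N_r)$ is continuous and positive (hence pinched on compacts, via \cite[Cor.~1.4]{FS}), and then works only with the density $d\mu_{\x N}(\x n)\simeq |n|^{2\delta}d\nu((\x n)^-)$, for which the \emph{upper} bound (\ref{estimee-nu}) suffices. You instead bound the time directly by the lattice count $\#\{\gamma:\ h_\gamma\asymp 1,\ \mathrm{dist}(a_\gamma,P)\lesssim 1\}$; this buys a more concrete, self-contained geometric statement (and makes the stronger fibrewise conclusion in the convex-cocompact case transparent, as in Theorem \ref{divergence} combined with the remark after Theorem \ref{nonergodicity}), at the cost of needing a genuine counting input that the paper's route avoids.

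That input is the one step you should not wave through: the claim that (\ref{estimee-nu}) together with $\underline{\dim}\,\nu=\delta$ ``gives $2^{j\delta+o(j)}$ relevant $\gamma$ at scale $j$'' does not follow from (\ref{estimee-nu}) alone, since an upper bound on $\nu$ of balls cannot by itself bound a cardinality. To convert measure into a count you also need the \emph{lower} half of Sullivan's shadow lemma --- $\nu(\mathcal{O}_o(B(\gamma o,R)))\ge c\,e^{-\delta d(o,\gamma o)}$ uniformly in $\gamma$ for $R$ large, which holds for any non-elementary conformal density --- plus a bounded-multiplicity argument (discreteness of $\Gamma$ ensures the shadows of the balls $B(\gamma o,R)$ at a given scale overlap boundedly), and all these shadows must be packed into a single ball $B(x^+,C2^{-j})$ to which (\ref{estimee-nu}) is applied; the dilation normalising the horosphere height must then be matched with the drift point $\x a_{j\log 2}$ so that the correction term is the one Lemma \ref{dist} controls. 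All of this is standard and fillable, so the gap is not fatal; but note that in the paper this two-sided information is hidden instead in the pinching of $\z\mapsto\mu_{\z N}(\z N_r)$ from \cite{FS}, which (like your route, and like Lemma \ref{dist}) uses finiteness of $\mu$ --- an hypothesis that is implicit in the theorem as proved and which your write-up correctly invokes.
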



 Let $d=\dim  U$.
 Let $r>0$. Let $N_r\subset N$ (resp. $U_r\subset U$) be the closed ball of radius $r>0$ and center $0 $ in $ N$ (resp. in $U$). 
Let $K_r=K.N_r $ be the $r$-neighbourhood
of $K $ along the $N$-direction.  


Let $\mu_{\x N}$ be the conditional measure on $W^{ss} (\x)=\x N$ 
of  the Bowen-Margulis measure. 
 
\begin{figure}[ht!]\label{dissipative }
\begin{center}
\input{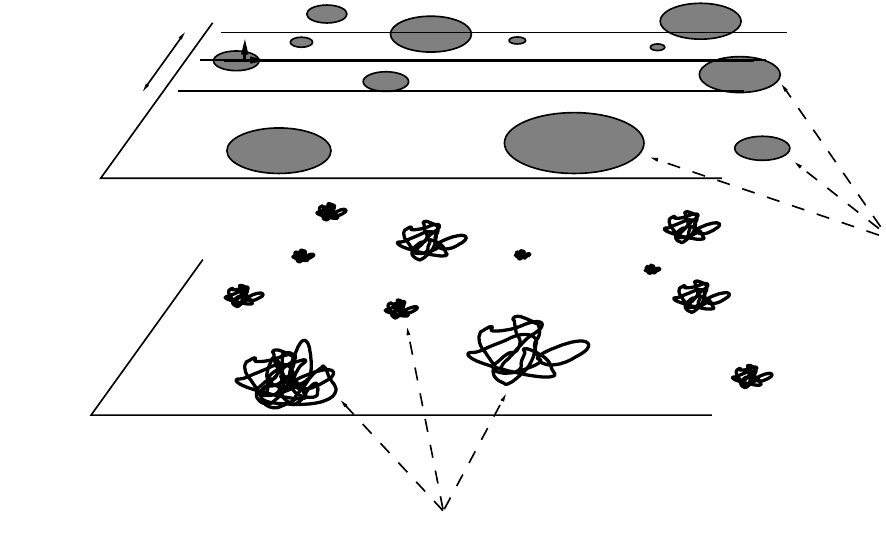_t}
\caption{Intersection of a $U$-orbit with the $\Gamma$-orbit of a compact set $\widetilde{K}_r$} 
\end{center}
\end{figure}

\begin{lem}\label{comparison-noncompact}\rm For all 
compact sets  $K\subset \Omega$, and all $\x\in \mathcal{E}$, if
$K_r=K.N_r$, for all $r>0$, there exists $c=c(x,r,K)>0$ such that 
\[
\int_ U 1_{K_{2r}}(\x u ) du \leq c \, \mu_{\x N }(\x U N_{2r} ).
\]
\end{lem}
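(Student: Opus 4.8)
The plan is to control the left-hand side, which is simply the Haar measure $\mathrm{Leb}_U(R)$ of the return set $R=\{u\in U:\x u\in K_{2r}\}$, by a packing argument along the $U$-orbit: I will spread out a separated family of return times, attach to each one a disjoint horospherical slice of the leaf $\x N$, and show that every such slice carries a definite amount of conditional mass. Summing the disjoint masses then bounds $\mathrm{Leb}_U(R)$ from above by $\mu_{\x N}(\x U N_{2r})$.

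Concretely, I would first fix an orthogonal splitting $N=U\oplus V$ with projection $\pi_U$, so that $|\pi_U(n)|\le |n|$, and extract a maximal $4r$-separated subset $\{u_i\}_{i\in I}$ of $R$. Maximality forces the balls $B_U(u_i,4r)$ to cover $R$, whence $\mathrm{Leb}_U(R)\le \omega_k(4r)^k\,\#I$, where $k=\dim U$ and $\omega_k$ is the volume of the unit ball of $U$. For the matching mass estimate I would note that for $n'\in N_{2r}$ the point $u_i n'\in N$ has $U$-component in $B_U(u_i,2r)$; since the $u_i$ are $4r$-separated these are disjoint, so the slices $\x u_i N_{2r}$ are pairwise disjoint subsets of $\x U N_{2r}$ and
\[
\mu_{\x N}(\x U N_{2r})\ \ge\ \sum_{i\in I}\mu_{\x N}(\x u_i N_{2r}).
\]
This part is routine; it uses nothing beyond $U\subset N$ and the product structure of the horospherical ball.

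The crux, and the step I expect to be the genuine obstacle, is the uniform lower bound $m_0:=\inf\{\mu_{\x N}(\x u N_{2r}):\x u\in K_{2r}\}>0$. For this I would pass to $\F\H^d$, fix a compact lift $\widetilde K$ of $K$, and set $\widetilde K_{2r}=\widetilde K N_{2r}$, which is compact. Given $\x u\in K_{2r}$ there is $\gamma\in\Gamma$ with $\gamma\tilde{\x}u\in\widetilde K_{2r}$, and since projecting to $\Gamma\backslash G$ cannot decrease the conditional mass of a lifted ball, $\mu_{\x N}(\x u N_{2r})\ge \tilde{\mu}_{\tilde{\x}u N}(\tilde{\x}u N_{2r})=\Phi(\gamma\tilde{\x}u)$, where $\Phi(\w):=\tilde{\mu}_{\w N}(\w N_{2r})$; here I use $\Gamma$-equivariance of the leafwise conditional measures. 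Thus it suffices to bound $\Phi$ below on $\widetilde K_{2r}$. Using the explicit formula $d\tilde{\mu}_{\w N}(\w n)=e^{\delta\beta_{(\w n)^-}(\w,\w n)}\,d\nu((\w n)^-)$, I would check that $\Phi$ is $\Gamma$-invariant and lower semicontinuous (it is the $\tilde{\mu}_{\cdot N}$-measure of a fixed open horospherical ball, and the conditional measures vary weak-$*$ continuously with the leaf), and, crucially, strictly positive on $\widetilde K_{2r}$: writing $\w=\tilde{\y}n''$ with $\tilde{\y}\in\widetilde K\subset\widetilde\Omega$ and $n''\in N_{2r}$, the endpoint $\tilde{\y}^-$ lies in $\Lambda_\Gamma=\supp\nu$ and is the image under $n\mapsto(\w n)^-$ of a point of $\w N_{2r}$, so the ball $\w N_{2r}$ meets the support of $\tilde{\mu}_{\w N}$ in a set of positive measure. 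A lower semicontinuous, strictly positive function on a compact set attains a positive minimum, giving $m_0>0$.

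Finally I would combine the two estimates. Since each term satisfies $\mu_{\x N}(\x u_i N_{2r})\ge m_0$, we get $\#I\le m_0^{-1}\sum_{i\in I}\mu_{\x N}(\x u_i N_{2r})$, and therefore
\[
\mathrm{Leb}_U(R)\le \omega_k(4r)^k\,\#I\le \frac{\omega_k(4r)^k}{m_0}\sum_{i\in I}\mu_{\x N}(\x u_i N_{2r})\le \frac{\omega_k(4r)^k}{m_0}\,\mu_{\x N}(\x U N_{2r}),
\]
so the lemma holds with $c=\omega_k(4r)^k/m_0=c(x,r,K)$. In effect the only substantial point is the positivity of $\Phi$, a Shadow-Lemma type statement asserting that a fixed-size horospherical ball based over the compact piece $K$ always sees a uniformly positive amount of Patterson–Sullivan mass; everything else is elementary packing together with the disjointness forced by $U\subset N$.
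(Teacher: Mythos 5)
Your proof follows essentially the same route as the paper's: a Vitali-type packing along the $U$-orbit (a maximal separated family of return points, pairwise disjoint leafwise slices inside $\x U N_{2r}$), combined with uniform two-sided bounds on the conditional mass of fixed-radius horospherical balls over a compact set. The paper does exactly this, extracting a maximal disjoint family of balls $\z_i N_r$ and using that $\z\mapsto \mu_{\z N}(\z N_r)$ is continuous (via the cited result of \cite{FS} that spheres are $\mu_{\z N}$-null) and positive, hence pinched between constants $c_r$ and $C_{4r}$ on compacta; your separation, disjointness-of-slices and covering estimates are the same argument with the bookkeeping made explicit.

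The one place where you diverge, and where your write-up has a genuine soft spot, is the lower bound $m_0>0$. You center the slices at the return points $\x u_i\in K_{2r}$ and deduce positivity of $\Phi(\w)=\tilde{\mu}_{\w N}(\w N_{2r})$ on $\widetilde{K}_{2r}$ from full support. But if $\w=\y n''$ with $\y\in\widetilde{K}$ and $|n''|$ exactly equal to $2r$, the support point $\y$ lies on the boundary sphere of the closed ball $\w N_{2r}$, and full support of $\tilde{\mu}_{\w N}$ (equivalently, $y^-\in\supp\nu$) does \emph{not} force the ball to have positive mass: nothing prevents the limit set from locally lying entirely outside the open shadow corresponding to $\w N_{2r}$, in which case $\Phi(\w)=0$ and your infimum vanishes (lower semicontinuity cannot rescue a zero value at such a point). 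The paper sidesteps this by re-centering: for each return point $\y\in\x U\cap\Gamma K N_r$ it picks $\z\in\y N_r\cap\Gamma K$, so each ball $\z_i N_r$ is centered at a lift of a point of the compact set $K\subset\Omega$, hence at an \emph{interior} support point, and positivity of $\mu_{\z N}(\z N_r)$ is immediate; the price is a radius slack (indicator of $K_r$ on the left against $N_{2r}$ on the right — note the paper's own statement with $K_{2r}$ versus its proof with $K_r$ already carries this looseness, which is harmless since the lemma is invoked for all $r>0$). Your argument is repaired the same way: re-center at points of $\Gamma\widetilde{K}$ and accept the slack, after which the rest of your proposal — including the equivariance-up-to-bounded-factors of the leafwise conditionals on compacta, which you use when lifting to $\F\H^d$ — matches the paper's proof.
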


\begin{proof} 
Let $r>0$. First, the map $\x\in\Omega \mapsto \mu_{ \x N  }(\x N_r )$ is continuous. It is
 an immediate consequence of \cite[Cor. 1.4]{FS}, where they establish that $\mu_{\x N}(\partial\x N_r)=0$ for all
$\x\in \Gamma\backslash G$ and $r>0$. 
 In this reference, they assume at the beginning $\Gamma$ to be convex-cocompact, 
but they use in the proof of corollary 1.4 only the finiteness of $\mu$. 

The above map is also  positive, and therefore 
bounded away from $0$ and $+\infty$ on any compact set. 
 Let $0<c_r=\inf_{\z \in K_{4r} } \mu_{\z N   }(\z N_r )\le C_r=\sup_{\z \in K_{4r} } \mu_{ \z N   }(\z N_r )<\infty$. 

Let us work now on $G$ and not on $\Gamma\backslash G$. 
Fix a frame $\x\in \widetilde{\mathcal{E}}\subset \F \H^d$.  
For all $\y\in \x U\cap \Gamma K N_r$,  choose some  $\z \in \y N_{r}\cap \Gamma K$ 
and consider the ball $\z N_r$. 
Choose among them  a maximal (countable) family of balls $\z_i N_{r}\subset \x U N_{2r}$ which are pairwise disjoint. By maximality, the family of balls $z_i N_{4r}$ cover $\x U N_{r}\cap \Gamma K N_r$.  
 
We deduce on the one hand 
\[
\int_\U 1_{K_r}(\x u) du \leq\sum_i \mu_{\x N} (\z_i N_{4r}) \le C_{4r} |I|\,.
\]
On the other hand, 
as the balls $\z_i N_{r}$ are disjoint, 
\[
\mu_{\x N}(\x U N_{2r})\ge \sum_i \mu_{\x N} (\z_i N_r)\ge c_r |I|. 
\]
This proves the  lemma.
\end{proof}


 To prove  Theorem \ref{divergence}, it is therefore sufficient to prove the following lemma. 
 
\begin{lem}\label{integrale-finie} Assume that  $\delta_\Gamma<\dim N-\dim U$. Then  for all {\rm $\x \in \mathcal{E}$} such that
{\rm $\frac{d(\x, \x a_t)}{t}\to 0$} when $t\to +\infty$, we have \rm 
\[
\int_{M} \mu_{\x m N}(\x m  U N_r) dm<\infty.
\]
\end{lem}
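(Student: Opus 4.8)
The plan is to integrate the measure of the thickened $U$-orbit over all rotated directions and recognise it, via Fubini, as a spherical-projection average whose convergence is governed by the gap between $\dim N-\dim U$ and the dimension $\delta_\Gamma$ of $\nu$ near the forward endpoint $x^+$.

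First I would reduce the $M$-integral to a Grassmannian average of slab measures. Since $M$ normalises $N$ and preserves the ball $N_r$, one has $\x m U N_r=\x(mUm^{-1})N_r\,m$, and the remark on conditional measures (that $d\mu_{\x mN}$ and $d\mu_{\x N}$ agree once one identifies frames with the same first vector) gives
$$\int_M \mu_{\x mN}(\x mUN_r)\,dm=\int_M \mu_{\x N}\bigl(\x\,(mUm^{-1})\,N_r\bigr)\,dm.$$
Identifying $\x N$ with $N\cong\R^{d-1}$, the set $\x(mUm^{-1})N_r$ is the $r$-neighbourhood of the rotated $k$-plane $m(U)$, so by Fubini and the transitivity of $M=\SO(d-1)$ on spheres of $N$, the inner $M$-integral of the indicator that $n$ lies within distance $r$ of $m(U)$ equals the measure of rotations $m$ with $|\Pi_{m(U)^\perp}(n)|\le r$. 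An elementary computation of the law of the norm of the projection of the uniform measure on $\mathbb{S}^{d-2}$ onto a $(\dim N-\dim U)$-plane shows this quantity is comparable to $\min\bigl(1,(r/|n|)^{\dim N-\dim U}\bigr)$. Hence the whole expression is comparable to $\mu_{\x N}(\x N_r)+r^{\dim N-\dim U}\int_{|n|>r}|n|^{-(\dim N-\dim U)}\,d\mu_{\x N}(\x n)$, and as the first term is finite it remains to bound the tail.

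Next I would convert this tail into a boundary integral concentrated near $x^+$. As $|n|\to\infty$ the endpoint $(\x n)^-$ tends to $x^+$, with $|n|\asymp|(\x n)^- -x^+|^{-1}$ for the spherical metric, while the conditional density satisfies $e^{\delta_\Gamma\beta_{(\x n)^-}(\x,\x n)}\asymp|n|^{2\delta_\Gamma}\asymp|(\x n)^- -x^+|^{-2\delta_\Gamma}$. Writing $\xi=(\x n)^-$ and $p=\dim N-\dim U$, the tail becomes, up to multiplicative constants,
$$\int_{B(x^+,C/r)}|\xi-x^+|^{\,p-2\delta_\Gamma}\,d\nu(\xi).$$
Decomposing dyadically over $\{2^{-j-1}\le|\xi-x^+|<2^{-j}\}$ and using the shadow estimate \eqref{estimee-nu}, namely $\nu(B(x^+,2^{-j}))\le 2^{-j\delta_\Gamma}e^{2\delta_\Gamma d(x^+,j\log 2)}$, this is dominated by $\sum_j 2^{-j(p-\delta_\Gamma)}e^{2\delta_\Gamma d(x^+,j\log 2)}$. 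The hypothesis $\frac{d(\x,\x a_t)}{t}\to0$ says precisely that the forward geodesic of $\x$, which terminates at $x^+$, satisfies $d(x^+,t)/t\to0$ (cf. the equivalences of Lemma \ref{dist}), so the error factor is $e^{o(j)}=2^{o(j)}$; together with the assumption $\delta_\Gamma<\dim N-\dim U$ this makes the exponent $-(p-\delta_\Gamma-o(1))\,j$ eventually negative and the series converges.

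The main obstacle I expect is the bookkeeping near $x^+$: one must simultaneously track the blow-up of the conditional density $\asymp|n|^{2\delta_\Gamma}$, the conformal relation $|n|\asymp|\xi-x^+|^{-1}$, and the non-uniform error term $e^{2\delta_\Gamma d(x^+,\cdot)}$ in \eqref{estimee-nu}, and check that the sublinear recurrence of $x^+$ is exactly what is needed to beat the strictly positive gap $p-\delta_\Gamma$. The spherical-projection computation of the first step, although elementary, also demands care to produce the correct exponent $\dim N-\dim U$.
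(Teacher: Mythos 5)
Your proposal is correct and is essentially the paper's own proof: the same reduction $\int_M\mu_{\x mN}(\x m U N_r)\,dm=\int_M\mu_{\x N}(\x\, mUm^{-1}N_r)\,dm$ via the remark that $\mu_{\x N}$ only depends on the underlying horosphere, the same Fubini/rotation estimate producing the factor $\min\bigl(1,(r/|n|)^{\dim N-\dim U}\bigr)$, the same conversion through $d\mu_{\x N}(\x n)\simeq|n|^{2\delta}\,d\nu((\x n)^-)$ and $|n|\asymp|(\x n)^--x^+|^{-1}$, and the same dyadic summation controlled by the refined shadow estimate (\ref{estimee-nu}) combined with the sublinear growth hypothesis on $d(\x,\x a_t)$. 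The only (harmless) difference is bookkeeping: you handle the region $|n|\le r$ explicitly via the finiteness of $\mu_{\x N}(\x N_r)$, where the paper simply restricts the integral to $N_0=\{n\in N:|n|\ge 1\}$.
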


Indeed, Lemma \ref{dist} ensures that the assumption of  Lemma \ref{integrale-finie}
 is satisfied $\lambda$-almost surely. 
And by Lemma \ref{comparison-noncompact}, its conclusion implies 
that for $\lambda$-a.e. $\x\in\mathcal{E}$ and almost all $m\in M$, the 
orbit $\x m U$ does not return infinitely often in a compact set $K$. 
As $\lambda$ is by construction the lift to $\F M$ of $m_{BR}$ on $T^1M$, with the Haar measure of $M$ on the fibers, this implies that for $\lambda$-almost all $\x$, the orbit $\x m U$ does not return infinitely often in a compact set $K$. This implies the dissipativity of $\lambda$ w.r.t. the action of $U$,
so that Theorem \ref{divergence} is proved. 

\begin{proof} Recall first that for $n\in N$ not too small, 
one has $d\mu_{\x N}(\x n)\simeq |n|^{2\delta} d\nu((\x n)^-)$. 
We want to estimate the   integral $\int_{M} \mu_{\x m N }(\x m  U N_r  ) d m$. 

First, observe that the measure $\mu_{\x N}$ on $\x N$ does not depend really on the orbit $\x N$, in
the sense that it is the lift of a measure on $W^{ss}(\pi_1(\x))$ through the inverse of the canonical projection $\y\in\x N\to \pi_1(\y)$ from $\x N$ to $W^{ss}(\pi_1(\x))$.  
Therefore, one has $\mu_{\x m N}(  \x m  U N_r)=\mu_{\x N}(\x m U m^{-1} N_r)$. 

Thus, by Fubini Theorem, one can compute\,: 
\begin{align*}
F(\x)& =\int_{M} \mu_{\x m N }(\x m  U N_r  ) d m \\
& =\int_{M} \mu_{\x N}(\x m  U m^{-1} N_r ) d m \\
& =\int_{M\times N} 1_{m \in M, m U m^{-1}\cap n N_r\neq\emptyset }(m)d m d\mu_{\x N}(n)\\
& \simeq \int_{N_0} r^{\dim N-\dim U} |n|^{\dim U-\dim N} d\mu_{\x N}(n)\\
& \simeq \int_{N_0} r^{\dim N-\dim U} |n|^{ \dim U-\dim N+2\delta} d\nu ((\x n)^-) \\
\end{align*}
 where $N_t=\{n\in N; |n|\geq 2^t\}$. 

The estimate comes from the probability that a point in the sphere of dimension $k-2$ falls in the $r/|n|$-neigborhood of a fixed subsphere of dimension $d-1$, see for example \cite[chapter 3]{MR1333890}.

  Therefore,  we get 
\[F(\x) 
\le \sum_{l\ge 0} 2^{l(\dim U-\dim N+2\delta)} \nu((\x N_l)^-)
\]

Now, observe that $(\x N_l)^-$ is comparable to the ball of center $x^+$ and radius $2^{-l}$ on the
boundary. 
By  Inequality (\ref{estimee-nu}), we deduce that
$$\nu((\x N_l)^-)\le 2^{-\delta l}e^{\delta_\Gamma d(\x a_{l\log 2},\Gamma o)}$$

For all $\varepsilon >0$, there exists $l_0\ge 0$, such that $d(\x a_{l\log 2})\leq  \varepsilon l\log 2$ for $l\ge l_0$. 
Thus, up to the $l_0$ first terms of the series, we  get the following upper bound for $F(\x)$. 
\begin{eqnarray*}
F(\x) 
& \le &  \sum_{l=0}^{l_0-1}\dots +\sum_{l\ge 0} 2^{l(\dim U-\dim N+\delta)}e^{\delta_\Gamma d(\x a_{l\log 2},\Gamma o)}\\
&\le &\sum_{l=0}^{l_0-1}\dots +\sum_{l\ge l_0} 2^{l(\dim U-\dim N+\delta+\varepsilon\delta)}
\end{eqnarray*}

Thus, if $\delta<\dim N-\dim U$, we can choose $\varepsilon>0$  
so that $\dim U-\dim N+\delta+\varepsilon\delta<0$, and $F(\x)$ is finite. 
\end{proof}

\begin{rema}\rm
Observe that the above argument, in the case $\delta+\dim  U= \dim N$, would   lead to the fact 
that 
\[
\int_{M} \mu_{\x m N}(\x m  U N_r) dm=\infty, 
\]
which is not enough to conclude to the conservativity, that is that almost surely, $\mu_{\x m N}(\x m  U N_r)=+\infty$. 
We refer to the works of Dufloux for a finer analysis in this case. 
\end{rema}


 \subsection{Equivalence of the Bowen-Margulis-Sullivan
 measure and the Burger-Roblin measure for invariants sets}

 As claimed in the introduction, we reduce the study of 
ergodicity of the Burger-Roblin measure $\lambda$ to the ergodicity of 
the Bowen-Margulis-Sullivan measure $\mu$. The rest of the section is devoted to the proof of the following Proposition:

\begin{prop} \label{BMandBRequivalent} Assume that $\Gamma$ is Zariski-dense.
If $\mu$ finite and $\delta_\Gamma +\dim(U)>d-1$, 
then for any $U$-invariant Borel set $E$, we have $\lambda(E)>0$ if and only if $\mu(E)>0$.
\end{prop}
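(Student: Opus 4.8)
The plan is to exploit the fact that $\mu$ and $\lambda$ share the \emph{same} transverse measure to the strong stable ($N$-)foliation and differ only in their conditionals along the $N$-leaves: by construction $\mu$ disintegrates with the Patterson--Sullivan type conditionals $\mu_{\x N}$ (built from $\nu$ on the backward endpoint), whereas $\lambda$ disintegrates, against the \emph{identical} transverse measure, with conditionals equal to the Lebesgue measure along the leaf. First I would fix this common disintegration and observe that a $U$-invariant set $E$ meets each leaf $\x N\cong N\cong\R^{d-1}$ in a $U$-saturated set, i.e. in $\pi^{-1}(B)$ for some Borel $B\subset N/U\cong\R^{d-1-\dim U}$, where $\pi\colon N\to N/U$ is the linear quotient. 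Hence on each leaf the $\mu$-conditional mass of $E$ is $(\pi_*\mu_{\x N})(B)$, while its $\lambda$-conditional mass is governed by the Lebesgue measure of $B$. The statement thus reduces, after integration against the shared transverse measure, to comparing the pushforward $\pi_*\mu_{\x N}$ (essentially a projection of $\nu$, to which it is locally comparable) with Lebesgue measure on $N/U$.

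Next I would use the $M$-invariance of both measures to turn this single, fixed projection into a family over \emph{all} directions. Since $\mu$ and $\lambda$ are the $M$-invariant lifts and $M\simeq\SO(d-1)$ rotates the subspace $\mathfrak u\subset\mathfrak n$ defining $U$, integrating over the fibre of $\F\M\to T^1\M$ realizes the orthogonal projections of $\nu$ onto $(d-1-\dim U)$-planes of every orientation. The hypothesis $\delta_\Gamma+\dim(U)>d-1$ says exactly that $d-1-\dim(U)<\delta_\Gamma$, and since $\mu$ is finite Proposition~\ref{dim} gives $\delta_\Gamma=\underline{\dim}\,\nu$; so one may pick $t$ with $d-1-\dim(U)<t<\underline{\dim}\,\nu$. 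Lemma~\ref{energie-finie} then supplies an exhaustion $A_k$ with $I_t(\nu_{|A_k})<\infty$, and Marstrand's projection theorem in its $L^2$ form (Theorem~\ref{projL2}) applies: for almost every direction the projection of $\nu_{|A_k}$ is absolutely continuous, with an $L^2$ density, with respect to Lebesgue measure on $N/U$.

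This absolute continuity yields one implication at once: $\pi_*\mu_{\x N}\ll\mathcal{L}_{N/U}$ for almost every leaf and almost every frame orientation $m\in M$, so a Lebesgue-null $B$ is $\pi_*\mu_{\x N}$-null, and integrating gives $\lambda(E)=0\Rightarrow\mu(E)=0$, that is $\mu(E)>0\Rightarrow\lambda(E)>0$. The reverse implication $\lambda(E)>0\Rightarrow\mu(E)>0$ is the delicate one, and it is where I expect the real difficulty. It \emph{cannot} be obtained leaf by leaf, because in a single flow box the support of $\pi_*\mu_{\x N}$ is the projection of the local (fractal) limit set, a proper subset of $N/U$; thus there exist local $U$-saturated sets that are $\lambda$-positive yet $\mu$-null. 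The saving point is that a \emph{global} $U$-invariant set cannot avoid $\Lambda_\Gamma$ in every chart simultaneously, since its $U$-orbits connect the charts. Here I would bring in the global dynamics already established — the density of $U$-orbits and topological transitivity (Proposition~\ref{UOmegadense}, Theorem~\ref{topologicallytransitive}), together with the $N$-ergodicity of $\lambda$ and the recurrence of $\lambda$-typical $U$-orbits that holds in the regime $\delta_\Gamma+\dim(U)>d-1$ — to force a $\lambda$-positive $U$-invariant set to carry positive $\mu$-conditional mass on a set of leaves of positive transverse measure, hence $\mu(E)>0$.

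The hard part, then, is precisely this converse. Marstrand furnishes absolute continuity in only one direction (projected $\nu$ is a.c. with respect to Lebesgue), whereas the ergodicity transfer we ultimately want — that $\mu$-null $U$-invariant sets are $\lambda$-null — needs the opposite comparison, which is false in any fixed chart and must be recovered globally from the positivity of the $L^2$ density combined with the recurrence and transitivity of the $U$-action. Reconciling the $L^2$ projection estimate with almost-everywhere positivity of the density, with no compactness assumption on $\M$, is the crux; it is exactly here that the finiteness of $\mu$, through the equality $\underline{\dim}\,\nu=\delta_\Gamma$, is indispensable.
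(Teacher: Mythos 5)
Your first implication is sound and is in fact the same argument as the paper's Lemma \ref{muposimplieslamdapos}: disintegrate both $\mu$ and $\lambda$ over the same transverse data $(\x^+,t_\x)$, observe that the slice of a $U$-invariant set in a leaf is saturated under $mUm^{-1}$ (hence a product with its projection on $mVm^{-1}$), truncate $\nu$ to sets of finite energy via Lemma \ref{energie-finie} (since $d-1-\dim U<\delta_\Gamma=\underline{\dim}\,\nu$ by Proposition \ref{dim}), and apply the $L^2$ form of Marstrand's theorem, using the $M$-invariance of $\mu$ and $\lambda$ so that ``almost every plane'' becomes ``almost every $m\in M$''. Up to taking the contrapositive, this is exactly how the paper proves $\mu(E)>0\Rightarrow\lambda(E)>0$.

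The converse implication, however, is a genuine gap in your proposal, and the tools you invoke to fill it would not work. Topological transitivity and the density of $\Omega U$ (Proposition \ref{UOmegadense}, Theorem \ref{topologicallytransitive}) are purely topological statements and cannot distinguish a $\lambda$-positive $U$-invariant set from a $\mu$-null one; and the ``recurrence of $\lambda$-typical $U$-orbits'' in the regime $\delta_\Gamma+\dim U>d-1$ is not available at this stage — in the paper it is a \emph{consequence} of Theorem \ref{ergodicity} (ergodicity implies conservativity), so invoking it here is circular; what Dufloux provides independently is $U$-recurrence of $\mu$, not of $\lambda$, and you give no mechanism to pass from it to the desired comparison. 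The paper's actual argument is quantitative and entirely absent from your outline: given a $U$-invariant set $E$ with $\mu(E)=1$, first replace $E$ by $\cap_{k\in\Z}Ea^k$ to make it invariant under an element $a\in A$ expanding $N$; introduce the density-radius function $F_{\varepsilon,E}(\x)$, the supremum of those $T$ such that $E$ fills every ball $\x V_t$, $t<T$, in the complementary direction $V$ to proportion at least $1-\varepsilon$. Lebesgue density points together with Lemma \ref{muposimplieslamdapos} (applied to the $U$-invariant set $F_{\varepsilon,E}^{-1}(0)\cap E$) show $F_{\varepsilon,E}>0$ $\mu$-a.e.; the exact equivariance $F_{\varepsilon,E}(\x a)=\lambda_a F_{\varepsilon,E}(\x)$ combined with the conservativity of the \emph{finite} measure $\mu$ under $a$ forces $F_{\varepsilon,E}\equiv+\infty$ $\mu$-a.e.; the set $\mathcal{I}_E$ of frames whose $V$-slice of $E$ has full Lebesgue measure is then $N$-invariant, so the $N$-ergodicity of $\lambda$ (Winter) gives it full $\lambda$-measure; finally, on $\mathcal{I}_E$ the density function of $E^c$ vanishes identically, which kills $\lambda(E^c)$. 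Without this (or some substitute exploiting the $A$-dynamics measurably rather than topologically), the implication $\lambda(E)>0\Rightarrow\mu(E)>0$ remains unproven in your proposal, exactly as you yourself suspected.
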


 We denote by $\mathcal{B}$ the Borel $\sigma$-algebra of $\mathcal{E}$, and $\mathcal{I}_U \subset \mathcal{B}$ 
the sub-$\sigma$-algebra of $U$-invariant sets. 
The first part of the proof of Proposition \ref{BMandBRequivalent} is the following.
 
 \begin{lem} \label{muposimplieslamdapos}
 Assume that $\Gamma$ is Zarisi-dense in $\SO_o(d,1)$ and that $\mu$ is finite. 
If $\delta>\dim N-\dim U$ and $E$ is a Borel $U$-invariant set such that $\mu(E)>0$, then $\lambda(E)>0$.
 \end{lem}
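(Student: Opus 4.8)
The plan is to exploit the fact, visible from the explicit local formulas for $\tilde\mu$ and $\tilde\lambda$, that $\mu$ and $\lambda$ share the \emph{same} transverse invariant measure to the strong stable (i.e.\ $N$-orbit) foliation and differ only through their conditionals along the leaves: on each leaf $\x N\cong\R^{d-1}$ the conditional of $\mu$ is the Patterson--Sullivan measure $\mu_{\x N}$, of lower dimension $\underline{\dim}\,\nu=\delta$ by Proposition \ref{dim}, whereas the conditional of $\lambda$ is the Lebesgue measure $\lambda_{\x N}$. Writing $d\tau$ for the common transverse measure, indexed by $(\x^+,t)$ with $\x^+\in\F\Lambda_\Gamma$ and $t\in\R$, Rokhlin disintegration gives
\[
\mu(E)=\int \mu_{\x N}(E\cap \x N)\,d\tau,\qquad \lambda(E)=\int \lambda_{\x N}(E\cap \x N)\,d\tau.
\]
I would argue by contraposition: assuming $\lambda(E)=0$, I will deduce $\mu(E)=0$. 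Since $\lambda(E)=0$, the Lebesgue measure of $E\cap\x N$ vanishes for $\tau$-a.e.\ leaf, so it suffices to show that $\lambda_{\x N}(E\cap\x N)=0$ forces $\mu_{\x N}(E\cap\x N)=0$ for $\tau$-a.e.\ leaf.

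Fix a leaf and identify it with $\R^{d-1}$ so that the right $U$-action becomes translation by a $k$-dimensional linear subspace, $k=\dim U$. Because $E$ is $U$-invariant, $E\cap\x N$ is a cylinder over the orthogonal projection $\Pi$ onto $U^\perp\cong\R^{d-1-k}$; hence $\lambda_{\x N}(E\cap\x N)=0$ is equivalent to $\L_{U^\perp}(\Pi(E\cap\x N))=0$, while $\mu_{\x N}(E\cap\x N)>0$ is equivalent to $(\Pi)_*\mu_{\x N}(\Pi(E\cap\x N))>0$. The required implication therefore holds as soon as $(\Pi)_*\mu_{\x N}$ is absolutely continuous with respect to $\L_{U^\perp}$. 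This is exactly Marstrand's projection Theorem \ref{projL2} applied in $\R^{d-1}$ to the $(d-1-k)$-plane $U^\perp$: the hypothesis $\delta>d-1-\dim U$ means $d-1-k<\delta=\underline{\dim}\,\nu$, so by Lemma \ref{energie-finie} one exhausts $\nu$ by subsets of finite $t$-energy for some $d-1-k<t<\delta$ and obtains absolute continuity of the projection for almost every $(d-1-k)$-plane.

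The one genuine difficulty is that Theorem \ref{projL2} controls only \emph{almost every} plane, whereas $U^\perp$ is a single prescribed one. I would circumvent this using the $M$-action. The transverse measure $d\tau$ integrates over the fibre of the bundle $\F\Lambda_\Gamma\to\Lambda_\Gamma$, that is over $m\in M=\SO(d-1)$ acting by rotations; by $M$-invariance of the conditionals (the Patterson--Sullivan conditional $\nu^{x^+}$ depends only on the geodesic carrying the leaf, as recorded in the Measures subsection, and Lebesgue is rotation invariant) all leaves over a fixed endpoint $x^+$ carry the same conditional $\nu^{x^+}$, while the $U$-direction is rotated to $mU$, so $U^\perp$ sweeps out the full Grassmannian $\mathcal{G}_{d-1-k}^{d-1}$ as $m$ runs over $M$, Haar measure pushing to the invariant measure $\sigma$. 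Thus, for fixed $x^+$, Marstrand gives $(\Pi_{mU^\perp})_*\nu^{x^+}\ll\L$ for a.e.\ $m$, while $\lambda(E)=0$ gives, by Fubini, $\L(E_m)=0$ for a.e.\ $m$, where $E_m$ is the slice of $E$ over the leaf indexed by $m$. Intersecting these two full-measure sets of $m$ yields $\nu^{x^+}(E_m)=0$ for a.e.\ $m$; integrating in $m$ and then in $(x^+,t)$ gives $\mu(E)=0$, as required. The main obstacle is precisely this alignment of the prescribed projection direction with Marstrand's generic conclusion, and it is exactly the rotational freedom carried by the transverse measure — together with the reformulation via contraposition, which lets the two ``almost every $m$'' statements be intersected — that resolves it.
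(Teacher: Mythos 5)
Your proposal is correct and follows essentially the same route as the paper's proof: both disintegrate $\mu$ and $\lambda$ over the common transverse measure $d\nu(\x^+)\,dt\,dm$, use $U$-invariance to identify each slice with a full cylinder over its projection onto the complement of $U$, restrict $\nu$ to finite-energy pieces via Lemma \ref{energie-finie}, and resolve the ``almost every plane'' limitation of Marstrand's Theorem \ref{projL2} by letting the frame orientation $m\in M$ rotate $U^\perp$ over the Grassmannian, applying Marstrand to the fixed leafwise measure rather than to the $m$-dependent set. Your contrapositive organization ($\lambda(E)=0\Rightarrow\mu(E)=0$) is only a cosmetic repackaging of the paper's direct argument, which handles the same $m$-dependence issue via $1_{E^F_{\x^+,t}}\nu_{|L}\ll\nu_{|L}$.
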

 \begin{proof} 

Let $E$ be a Borel $U$-invariant set with $\mu(E)>0$.
 It is sufficient to show that $\tilde{\lambda}(\tilde{E})>0$. Let $\x_0=(\x_0^+,x_0^-,t_{\x_0})$ be a frame in the support of the (non-zero) measure $1_{\tilde{E}}\tilde{\mu}$, and $F$ be a small neighbourhood of $\x_0$. Denote by  $\mathcal{H}(x^+,t_\x)$ the horosphere passing through the base-point of the frame $\x$. 
The measure $\tilde{\mu}(\tilde{E}\cap F)$ can be written
 $$
\tilde{\mu}(\tilde{E}\cap F)= 
\int_{\mathcal{F}\Lambda_\Gamma \times \R} \left( \int_{\mathcal{H}(x^+,t)}  1_{\tilde{E}\cap F} (\x^+,x^-,t)\,.g 
 d\nu(x^-) \right) d\tilde{\nu}(\x^+)dt_\x,
$$
 where $g$ is a positive continuous function, namely the exponential of some Busemann functions, and $\tilde{\nu}$ the $M$-invariant lift of $\nu$ to $\mathcal{F}\Lambda_\Gamma$. The main point is that it is positive, so for a set $J\subset \mathcal{F}\Lambda_\Gamma \times \R$ of positive $\tilde{\nu}\otimes dt$ measure,  for any $(\x^+,t_\x)\in J$, the set
 $$E_{\x^+,t}^F=\{x^-  \; :\; (\x^+,x^-,t_\x)\in \tilde{E}\cap F, \},$$
 has positive $\nu$-measure. \\
 
\begin{figure}[ht!]\label{comparison-mu-lambda}
\begin{center}
\input{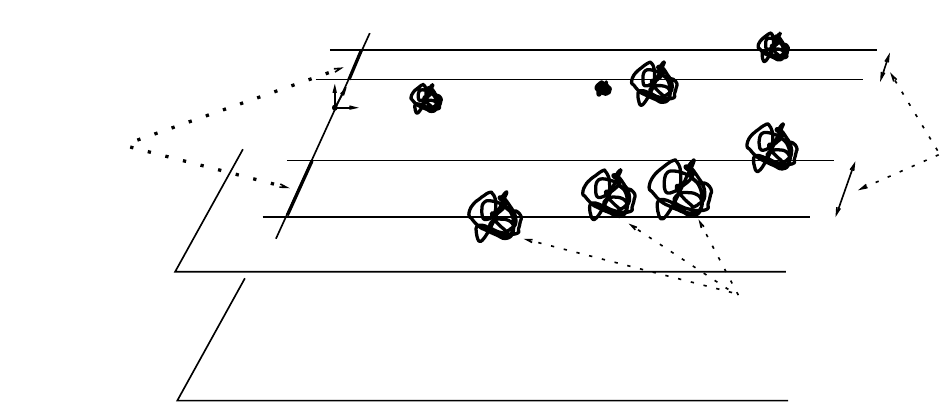_t}
\caption{  } 
\end{center}
\end{figure}  
 
 Since similarly,
 $$\tilde{\mu}(\tilde{E})= \int_{\mathcal{F}\Lambda_\Gamma \times \R} \left( \int_{\mathcal{H}(x^+,t_\x)} g'.1_{\tilde{E}} (\x^+,x^-,t_\x)  d\mathcal{L}(x^-) \right) d\tilde{\nu}(\x^+)dt_\x,$$
with $g'>0$, it is sufficient to show that for a subset of $(\x^+,t_\x) \in J$ of positive measure, the set 

 $$
E_{\x^+,t}=\{x^-  \; :\; (\x^+,x^-,t_\x)\in \tilde{E}, \}
$$
has positive Lebesgue $\mathcal{L}$-measure.\\

On each horosphere $\mathcal{H}(x^+,t_\x)$, we wish to use Marstrand's projection Theorem, and therefore to use 
an identification of the horosphere with $\R^{d-1}$. A naive way would be to say that 
$\mathcal{H}(x^+,t_\x)$ is diffeomorphic to $\x N$, and therefore to $N\simeq \R^{d-1}$. 
However,    it will be more convenient to use an identification of these horospheres with $N\simeq \R^{d-1}$ which does not depend on a frame $\x$ in $\pi_1^{-1}(\mathcal{H}(x^+,t_\x))$. 

 In order to obtain these convenient coordinates, we fix a smooth section $s$ 
from a neighbourhood of $x_0^+$ to $\mathcal{F} \partial \H^d$. If $\x \in F$, the horosphere $\mathcal{H}(x^+,t_\x)$  can be identified (in a non-canonical way) with $N$ the following way: let $n\in N$, we associate to it the base-point of $(s(x^+),x_0^-,t_\x)n$. This way, the identification does depend only on the $MN$-orbit of $\x$, that is depends on the horosphere only.\\
 
 For $\x^+\in \mathcal{F}\Lambda_\Gamma$, define $m=m(\x^+) \in M$ by the relation $\x^+=s(x^+)m$. 
If $\x \in \tilde{E}$, then so does $\x u= (s(x^+)m,x_0^-,t_\x)u
 =(s(x^+),x_0^-,t_\x)mu$, which has the same base-point as $(s(x^+),x_0^-,t_\x)mum^{-1}$. This means that the set $E_{\x^+,t}$, viewed as a subset of $N$, is invariant by translations by the subspace $mUm^{-1}$ in these coordinates. From now on, $E_{\x^+,t}$ will always be seen as a subset of $N$. \\ 
 
 Let $V$ be the orthogonal complement of $U$ in $N$, and $\Pi_{mVm^{-1}}:N\to mVm^{-1}$ be the orthogonal projection onto $mVm^{-1}$. What we saw is that the set $E_{\x^+,t}$ is a product of $mUm^{-1}$ and $\Pi_{mVm^{-1}}(E_{\x^+,t})$. Clearly, it contains the product of $mUm^{-1}$ and $\Pi_{mVm^{-1}}(E_{\x^+,t}^F)$, so it is of positive Lebesgue $V$-measure as soon as $\Pi_{mVm^{-1}}(E_{\x^+,t}^F)$ has positive Lebesgue measure in $mVm^{-1}$.
 
 The strategy is now to use the projection Theorem \ref{projL2} on each horosphere to deduce that $\Pi_{mVm^{-1}}(E_{\x^+,t}^F)$ is of positive Lebesgue measure for almost every $m \in M$. Unfortunately, we cannot apply it to the measure $1_{E_{\x^+,t}^F}\nu$ directly, since the set $E_{\x^+,t}^F$ depends on the orientation $m$ of the frame $\x^+=s(x^+)m$ (and not only on the Horosphere $\mathcal{H}(x^+,t_\x)$), so it depends on $M$. \\
 
 By Lemma \ref{energie-finie}, we can find a subset $L\subset \Lambda_\Gamma$, such that  $\nu_{|L}$ has finite $\dim(N)-\dim(U)$-energy, and  $E_{\x^+,t}^F\cap L$ has positive $\nu$-measure for any $(\x^+,t)\in J'$, where $J'\subset J$ is of positive $\tilde{\nu}\otimes dt$- measure. \\
 
 One can moreover assume that for every horosphere $\mathcal{H}(x^+,t_\x)$ with $\x \in F$, $L$ lies in a fixed compact set of $N$ using both identifications of the horosphere with $\partial \H^d$ and $N$. Notice that these identifications are smooth maps, so the finiteness of the energy of $\nu_{|L}$ does not depend on the model metric space chosen.\\
 
 By Theorem \ref{projL2}, applied on each horosphere $\mathcal{H}(x^+,t)\simeq N=U\oplus V$, the orthogonal projection $(\Pi_{mVm^{-1}})_*\nu_{|L}$ is $m$-almost surely absolutely continuous with respect to the Lebesgue measure  on $mVm^{-1}$. But since $1_{E_{\x^+,t}^F}\nu_{|L} \ll \nu_{|L}$, we have for almost every $m$
 $$(\Pi_{mVm^{-1}})_* (1_{E_{\x^+,t}^F}\nu_{|L}) \ll (\Pi_{mVm^{-1}})_*\nu_{|L} \ll \mathcal{L}_{mVm^{-1}}.$$
 This forces the projection set $\Pi_{mVm^{-1}} E_{\x^+,t}^F$ to be of positive $\mathcal{L}_{mVm^{-1}}$-measure $m$-almost surely, for those $m$ such that 
 $(s(x^+)m,t)\in J'$.
 \end{proof}

The second step of the proof is the following. 

\begin{lem} Assume that $\Gamma$ is Zariski-dense in ${SO^o(d,1)}$, that $\mu$ is ergodic and conservative, 
and $\delta>\dim N-\dim U$. 
If $E$ is a Borel $U$-invariant set such that $\mu(E)=1$, then $\lambda(E\triangle \mathcal{E})=0$.  
\end{lem}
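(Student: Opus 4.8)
The plan is to set $F:=\mathcal E\setminus E$ and prove $\lambda(F)=0$; since $\lambda$ is carried by $\mathcal E$ this is exactly $\lambda(E\triangle\mathcal E)=0$, and together with Lemma \ref{muposimplieslamdapos} it yields Proposition \ref{BMandBRequivalent}. The set $F$ is again $U$-invariant, with $\mu(F)=0$. The mechanism of Lemma \ref{muposimplieslamdapos} cannot simply be reused: there one passes from $\mu(E)>0$ to $\lambda(E)>0$ using that $(\Pi_{mVm^{-1}})_*\nu$ is absolutely continuous with respect to Lebesgue, i.e. that Lebesgue-null projections are $\nu$-null; here one needs the opposite implication, $\nu$-null $\Rightarrow$ Lebesgue-null, on each strong stable leaf. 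On a single horosphere this is plainly false, as $\Lambda_\Gamma$ is Lebesgue-null, so the geodesic dynamics must intervene.

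The key structural remark is that $\mu$ and $\lambda$ differ only in their conditional measures along the strong stable foliation (the $N$-orbits, i.e. the $x^-$-direction), $\mu_{\x N}\sim\nu$ versus $\lambda_{\x N}\sim\mathcal L$, while they induce the \emph{same} transverse measure $\tilde\nu\otimes dt$ on the leaf space $\mathcal F\Lambda_\Gamma\times\R$ and share their conditionals in the unstable $x^+$-direction. I would exploit this through a Hopf argument. Consider the forward Cesàro averages $\phi_T(\x)=\frac1T\int_0^T\mathbf 1_F(\x a_s)\,ds$ along the geodesic direction that contracts the stable leaves. Since $\mu$ is finite, hence $A$-ergodic and conservative, Birkhoff's theorem gives $\phi_T\to\mu(F)=0$ for $\mu$-a.e.\ $\x$. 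Moreover, because $A$ normalises $U$ (the Lie algebra $\mathfrak n$ is a single $Ad(A)$-eigenspace), one has $a_{-s}Ua_s=U$ and hence $\mathbf 1_F(\x u a_s)=\mathbf 1_F(\x a_s)$ for every $u\in U$; combined with the contraction of stable leaves under the flow, a Hopf-type argument makes these averages constant along whole $N$-leaves. As forward averages descend to the common leaf space, the $\mu$-a.e.\ vanishing should transfer to a $\lambda$-a.e.\ vanishing of $\phi_T$.

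To conclude from $\phi_T\to0$ $\lambda$-a.e.\ that $\lambda(F)=0$, I would invoke conservativity. As $\Gamma$ is of divergent type, $\nu$—and therefore $\lambda$ in the $x^+$-variable—gives full mass to conical limit points, so the forward geodesic orbit of $\lambda$-a.e.\ frame returns to a fixed compact set infinitely often (this is conicality, consistent with Lemma \ref{dist}); thus the geodesic flow is forward-conservative for $\lambda$ on $\mathcal E$. For a conservative system a non-negative observable whose forward Cesàro averages vanish almost everywhere must itself vanish almost everywhere (Hopf–Hurewicz ratio theorem), forcing $\lambda(F)=0$.

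The hard part, and the sole place where the hypothesis $\delta_\Gamma>\dim N-\dim U$ is genuinely used, is the transfer in the second step. The averages $\phi_T$ are only morally leaf-constant: $\mathbf 1_F$ is merely Borel, and $\lambda$ charges exactly the points $x^-\notin\Lambda_\Gamma$ that $\mu$ ignores, so controlling $\phi_T$ there is not formal—indeed a transfer valid for all $\delta_\Gamma$ would contradict Theorem \ref{nonergodicity}. This is where Marstrand's projection Theorem \ref{projL2} and the finite-energy Lemma \ref{energie-finie} enter: since $\dim V=\dim N-\dim U<\delta_\Gamma=\underline\dim\,\nu$ (Proposition \ref{dim}), for $\tilde\nu\otimes dt$-a.e.\ leaf and a.e.\ orientation $m$ the projected conditional $(\Pi_{mVm^{-1}})_*\nu$ is absolutely continuous with an $L^2$ density $h$. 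The whole difficulty is to show $h>0$ Lebesgue-a.e.\ on the relevant region: this positivity is the reverse absolute continuity $\mathcal L\ll(\Pi_{mVm^{-1}})_*\nu$ that upgrades the $\nu$-null slice $\Pi_{mVm^{-1}}(F)$ to a Lebesgue-null one, equivalently the $\mu$-a.e.\ vanishing of $\phi_T$ to its $\lambda$-a.e.\ vanishing. Establishing it—by feeding the $L^2$-bound of Theorem \ref{projL2} into a Hardy–Littlewood maximal estimate and using the self-similarity of $\nu$ across scales supplied by the recurrence of the geodesic flow—is the technical heart of the lemma, and I expect it to be the principal obstacle.
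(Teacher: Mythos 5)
Your plan has a genuine gap, and it sits exactly where you placed it yourself: the transfer of the vanishing of the forward averages from $\mu$-a.e.\ to $\lambda$-a.e. The mechanism you propose for closing it --- showing the Marstrand density $h$ of $(\Pi_{mVm^{-1}})_*\nu$ is positive Lebesgue-a.e., i.e.\ the reverse absolute continuity $\mathcal{L}_{mVm^{-1}} \ll (\Pi_{mVm^{-1}})_*\nu$ --- cannot work: the projected measure is carried by the projection of $\Lambda_\Gamma$ (intersected with the horosphere), which is typically a proper compact subset of $mVm^{-1}$, and even on its support nothing in Theorem \ref{projL2} (which only gives $h\in L^2$) or in any self-similarity of $\nu$ forces $h>0$ a.e.; no statement of this kind is available under the hypothesis $\delta_\Gamma>\dim N-\dim U$. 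Moreover your closing step is independently flawed: $\lambda$ is an infinite measure and only quasi-invariant under $A$, and for conservative ergodic systems in infinite measure the plain Ces\`aro averages of any integrable nonnegative observable tend to zero a.e.\ regardless (this is Aaronson's theorem, and it is why Hopf--Hurewicz theorems are stated for \emph{ratio} averages), so ``$\phi_T\to 0$ $\lambda$-a.e.'' would carry no information and cannot force $\lambda(F)=0$. Finally, the leafwise constancy of the Hopf averages for the merely Borel function $\mathbf{1}_F$ is precisely the kind of statement that fails here, since $\mu$ and $\lambda$ have mutually singular conditionals along $N$-leaves; you flag this but do not repair it.

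The paper's proof avoids any reverse absolute continuity. Using Lemma \ref{muposimplieslamdapos} as the \emph{only} input of the dimension hypothesis (invoked three times, always in the direction $\mu$-positive $\Rightarrow$ $\lambda$-positive), it first replaces $E$ by $\cap_k E a^k$ to make it $a$-invariant, then introduces the $U$-invariant ``density radius'' function
\[
F_{\varepsilon,E}(\x)=\sup\Bigl\{T>0 \, : \, \forall t\in(0,T),\ \int_V \mathbf{1}_{\x V_t\cap E}\,dv \ge (1-\varepsilon)\,|V_t|\Bigr\},
\]
where $V$ is a complement of $U$ in $N$. A Lebesgue density point argument combined with Lemma \ref{muposimplieslamdapos} shows $F_{\varepsilon,E}>0$ $\mu$-a.e.; the commutation relation gives the exact scaling $F_{\varepsilon,E}(\x a)=\lambda_a F_{\varepsilon,E}(\x)$, and the conservativity of $\mu$ under $a$ then forces $F_{\varepsilon,E}=+\infty$ $\mu$-a.e.\ (a set of the form $F^{-1}(t_1,t_2)$ of positive measure would be pushed off itself by $a^k$). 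The set $\mathcal{I}_E=\cap_j F_{1/j,E}^{-1}(+\infty)$, where $E$ has \emph{full} transverse Lebesgue density, is $N$-invariant of positive $\lambda$-measure (Lemma \ref{muposimplieslamdapos} again), hence of full $\lambda$-measure by Winter's $N$-ergodicity of $\lambda$ --- this is the step that plays the role of your hoped-for transfer, and it replaces positivity of a projection density by an ergodicity argument for the big group $N$. One then rules out $\lambda(E^c)>0$ because on $\mathcal{I}_E$ one has $F_{\varepsilon,E^c}\equiv 0$ along $V$, contradicting the density-point statement applied to $E^c$. If you want to salvage your approach, the lesson is that you should aim the dynamics not at the indicator $\mathbf{1}_F$ itself but at a $U$-invariant quantitative density function whose scaling under $A$, together with conservativity of the \emph{finite} measure $\mu$ and the $N$-ergodicity of $\lambda$, does the upgrading for you.
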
 

\begin{proof} First, pick some  element $a\in A$ whose adjoint action has eigenvalue $\log(\lambda_a)>0$ on $\mathfrak{n}$  such that 
$a n a^{-1} =\lambda_a v$ for all $n \in N$.

Replacing $E$ by $\cap_{k\in \Z}E.a^k$ (another set of full $\mu$-measure), 
we can freely assume that $E$ is $a$-invariant. \\

By Lemma \ref{muposimplieslamdapos}, we already know that $\lambda(E)>0$. As above also, let $V\subset N$ be a
supplementary of $U$ in $N$. 
As $\lambda(E)>0$, we know that for $\lambda$-almost all $\x\in E$, the set 
$V(\x,E)=\{v\in V, \x v\in E\}$ has positive $V$-Lebesgue (Haar) measure $dv$.

The Lebesgue density points of $V(\x,E)$  have full $dv$-measure. 
Recall that $V_t$ is the ball of radius $t$ in $V$. 

Let $\epsilon \in (0,1)$, and define for all $\x\in \mathcal{E}$ (not only $\x\in E$)
\[
F_{\varepsilon,E}(\x)=\sup\left\{T>0 \, : \, \forall t \in (0,T), \int_V {\bf 1}_{\x V_t \cap E} dv \ge (1-\varepsilon) |V_t|\right\}\,,
\]
with the convention that it is zero if no such $T$ exists; it may take the value $+\infty$. Observe that $F_{\varepsilon,E}$ is a $U$-invariant map, because $E$ is $U$-invariant. \\

Since the Lebesgue density points of $V(\x,E)$ have full $dv$-measure, then for $\lambda$-almost all $\x \in E$, and $dv$-almost all $v\in V(\x,E)$, $F_{\varepsilon,E}(\x v)>0$. Moreover, this statement stay valid for other $U$-invariant sets $E'$ of positive $\lambda$-measure.\\


 We claim that for $\mu$-almost every $\x \in E$, $F_{\varepsilon,E}(\x)>0$. 
Assuming the contrary, $E'=F_{\varepsilon,E}^{-1}(0)\cap E$ is a $U$-invariant 
 set of positive $\mu$-measure, so by Lemma \ref{muposimplieslamdapos}, it 
is also of positive $\lambda$-measure. 
As   $E'\subset E$, $F_{\varepsilon,E'}\leq F_{\varepsilon,E}$, so that the function $F_{\varepsilon, E'}$ 
is identically zero on $E'$. 
But there exists $\x \in E'$ and $v\in V(\x,E')$ such that $\x v\in E'$ (by definition of $V(\x,E')$)  and $F_{\varepsilon,E'}(\x v)>0$, by the previous consideration of Lebesgue density points, leading to an absurdity.\\

We will now show that $F_{\varepsilon,E}$ is in fact infinite, $\mu$-almost surely. 
First, the classical commutation relations between $A$ and $N$ (and therefore $A$ and $V\subset N$) give
 $a V_T a^{-1}=V_{\lambda_a T}$.   Observe also that,by $a$-invariance of $E$,
\[
V(\x a, E)=\{v,\x a v\in E\}=\{v\in V, \x a v a^{-1}=\x.(\lambda_a.v)\in E \}= \lambda_a^{-1} V(\x,E).
\]
Therefore, $F_{\varepsilon,E}(\x a)= \lambda_a  F_{\varepsilon,E}(\x)$, 
i.e. it is a function increasing along the dynamic of an ergodic and conservative measure-preserving system. 
This situation is constrained by the conservativity of $\mu$. Indeed, 
assume there exists $t_1<t_2$ such that $\mu(F_{\varepsilon,E}^{-1}(t_1,t_2))>0$. 
Then for all $k$ large enough (namely s.t. $\lambda_a^k>t_2/t_1$), we have
$$
\left(F_{\varepsilon,E}^{-1}(t_1,t_2) \right) a^k\cap \left(F_{\varepsilon,E}^{-1}(t_1,t_2) \right)=\emptyset,
$$
in contradiction to the conservativity of $\mu$ w.r.t. the action of $a$.\\
 
This shows that $F_{\varepsilon,E}(\x)=+\infty$ for $\mu$-almost all $\x \in \mathcal{E}$. 

Define now $\mathcal{I}_E=\cap_{j\in\N^*} F_{1/j,E}^{-1}(+\infty)$.
It is a $U$-invariant set of full $\mu$-measure as a countable intersection of sets of full $\mu$-measure. 
Therefore $\lambda(\mathcal{I})>0$ by Lemma \ref{muposimplieslamdapos}. 
By definition of $F_{\varepsilon, E}$, $\mathcal{I}$ consists of the frames $\x$ such that $V(\x,E)$ is of full measure in $V$, a property that is $V$-invariant. Hence $\mathcal{I}$ is $N$-invariant of positive $\lambda$-measure, so by ergodicity of $(N,\lambda)$, it is of full $\lambda$-measure.\\

 Unfortunately, we know that $E\subset \mathcal{I}_E$ but $\mathcal{I}_E$ does not have to be a subset of $E$. 
To be able to conclude the proof (i.e. show that $\lambda(E^c)=0$),  we consider the complement set $E'=E^c$, and assume it to be of positive $\lambda$-measure. For any $\x \in \mathcal{I}_E$ and $v\in V$, by definition of $\mathcal{I}_E$, $F_{\varepsilon,E^c}(\x v)=0$. So the intersection of $\mathcal{I}_E$ and $E^c$ is of zero measure, and thus $\lambda(E^c)=0$.

 
\end{proof}

Let us now conclude the proof of Proposition \ref{BMandBRequivalent}.   
 Let $E$ be a $U$-invariant set. We already know that $\mu(E)>0$ implies $\lambda(E)>0$. 
For the other direction,  assume that $\mu(E)=0$, so that
$\mu(E^c)=1$. The above Lemma applied to $E^c$ therefore would imply $E^c=\mathcal{E}$
 $\lambda$-almost surely, so that $\lambda(E)=0$. 
Thus, $\lambda(E)>0$ implies  $\mu(E)>0$.


\section{Ergodicity of the Bowen-Margulis-Sullivan measure}


 \subsection{Typical couples for the negative geodesic flow}
 
  Let us say that a couple $(\x,\y)\in \Omega^2$ is {\em typical} (for $\mu\otimes \mu$) 
if for {\em every} compactly supported continuous 
 function $f\in C^0_c(\mathcal{E}^2)$, the conclusion of the Birkhoff ergodic Theorem holds for the couple $(\x,\y)$ in negative discrete time for the action of $a_1$, more precisely:
  \[
   \lim_{N\rightarrow +\infty} \frac1{N} \sum_{k=0}^{N-1} f(\x a_{-k},\y a_{-k}) = \mu\otimes \mu(f).
  \]
 Write $\mathcal{T}$ for the set of typical couples, which is a subset of the set of generic couples. 

 \medskip
 
Let us explain briefly why this is a set of full $\mu\otimes \mu$-measure. 
Since the action of $A$ on $(\Omega,\mu)$ is mixing, so is the action of $a_{-1}$.
 A fortiori, the action of $a_{-1}$ on $(\Omega,\mu)$ is weak-mixing, so the diagonal action of $a_{-1}$ on $(\Omega^2,\mu\otimes \mu)$ is ergodic. 
It follows from the Birkhoff ergodic Theorem applied to a countable dense subset of the separable space 
$(C^0_c(\mathcal{E}^2),\|.\|_\infty)$ that $\mu\otimes\mu$-almost every couple is typical. 
 
 \medskip
 
As the set of generic couples used in the topological part of the article (see section \ref{sectiontop}), 
the subset of typical couples enjoys the same nice invariance properties by $\left( (M\times A)\ltimes N^-\right)^2$.
 That is, $(\x,\y)\in (\F \H^d)^2$ being the lift of a typical couple only depends on $(x^-,y^-)$ in Hopf coordinates. 
This follows from the fact that $M\times A$ acts isometrically on $C_c^0(\mathcal{E}^2)$ and commutes with $a_{-1}$, 
so $\mathcal{T}$ is $(M\times A)^2$-invariant, and the fact that, 
since elements of $C^0_c(\mathcal{E}^2)$ are uniformly continuous,
 two orbits in the same strong unstable leaf have the same limit for their ergodic averages.


 \subsection{Plenty of typical couples on the same $U$-orbit}
\label{subsectionplenty}

 We will say that there are {\em plenty of typical couples on the same $U$-orbit} if there exists a probability measure $\eta$ on $\Omega^2$ 
such that the three following conditions are satisfied:
 \begin{enumerate}\label{plenty}
 \item Typical couples are of full $\eta$-measure, that is $\eta(\mathcal{T})=1$.
 \item Let $ p_1(\x,\y)=\x,  p_2(\x,\y)=\y$ be the coordinates projections. 
We assume that, for $i=1,2$, $( p_i)_*\eta$ is absolutely continuous with respect to $\mu$. 
We denote by $D_1$, $D_2$ their respective Radon-Nikodym derivatives, so that $( p_i)_*\eta=D_i \mu$.
  We assume moreover that $D_2 \in L^{2}(\mu)$. 
 \item Let $\eta_\x$ and $\eta^\y$ be the measures on $\Omega$ obtained by disintegration of $\eta$ along the maps $ p_i$, $i=1,2$ respectively. 
More precisely, for any $f\in L^1(\eta)$,
 \[
  \int_{\Omega^2} f d\eta= \int_\Omega \left( \int_\Omega f(\x,\y) d\eta_\x(\y) \right) d\mu(\x)= \int_\Omega \left( \int_\Omega f(\x,\y) d\eta^\y(\x) \right) d\mu(\y).
 \]
 Note that $\eta_\x$ (resp $\eta^\y$) have total mass $D_1(\x)$ (resp. $D_2(\y)$). 
Whenever this makes sense, define the operator $\Phi$ which to a function $f$ on $\Omega$ associates the following function on $\Omega$:
\[
\Phi(f)(\x)=\int_{\Omega} f(\y)d\eta_\x(\y).
\]
 The condition (3) here is that if $f$ is a bounded, measurable $U$-invariant function, then 
 \[
 \Phi(f)(\x)=f(\x)D_1(\x)
 \]
  for $\mu$-almost every $\x\in \Omega$. Note that even if $f$ is bounded, $\Phi(f)$ may not be defined everywhere.
 \end{enumerate}
 
\begin{rema}\rm Observe that we do not require any invariance of the measure $\eta$. Condition (1) replaces the $A$-invariance, 
whereas Condition (3) establish a link between the structure of $U$-orbits and $\eta$. 
\end{rema}

\begin{rema}\label{eta-supported-on-U-orbits}\rm 
 Let us comment a little bit on condition (3): it is obviously satisfied if, for example, $\eta_\x$ is supported on $\x U$ for almost every $\x$, 
that is, $\eta$ is supported on couples of the form $(\x,\x u)$ with $u\in U$. 
It will be the case for the measures $\eta$ we will construct in section \ref{constrplentydim3} and \ref{higherdimeta} in dimension 3 and higher respectively. \\
 A good example of a measure $\eta$ satisfying (2) and (3) is the following: let $(\mu_\x)_{\x\in \Omega}$ 
be the conditional measures of $\mu$ with respect to the $\sigma$-algebra of $U$-invariant sets, 
and define $\eta$ as the measure on $\Omega^2$ such that $\eta_\x=\mu_\x$ by the above disintegration along $ p_1$. 
However, its seems difficult to prove directly that it also satisfies (1). 
This example also highlights that condition (3) is in fact weaker than requiring that $\eta_\x$ is supported on $\x U$.
\end{rema}

\begin{rema}\label{pas-besoin-de-L2}\rm The condition that the Radon-Nikodym derivatives $D_i$ be in $L^2$ is not restrictive. 
Indeed , we will 
construct a measure $\eta'$ satisfying all above conditions except this $L^2$-condition. 
The Radon-Nikodym derivatives $D_i$ are 
integrable, so that they are bounded on a set of large measure. 
We will simply restrict $\eta'$ 
to this subset, and normalize it, 
to get the desired probability measure $\eta$.  
\end{rema}
 \medskip

 The interest we have in finding plenty of typical couples on the same $U$-orbit is due to the following key observation.
 \medskip
 
 \begin{lem} \label{reductiontwo}
  {\em To prove Theorem \ref{ergodicity}, it is sufficient to prove that there are plenty of typical couples on the same
$U$-orbit, that is that there exists  a probability measure $\eta$ satisfying (1),(2) and (3).}
 \end{lem}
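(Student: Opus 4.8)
The plan is to first reduce the ergodicity of both $\mu$ and $\lambda$ to that of $\mu$ alone, and then to exploit the coupling $\eta$ together with the typicality property to force every $U$-invariant set to be trivial for $\mu$.

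First I would invoke Proposition \ref{BMandBRequivalent}: since $\mu$ is finite and $\delta_\Gamma+\dim(U)>d-1$, a $U$-invariant Borel set $E$ satisfies $\mu(E)>0$ if and only if $\lambda(E)>0$. Consequently, if $\mu$ is $U$-ergodic, then any $U$-invariant $E$ has $\mu(E)\in\{0,1\}$, and applying the equivalence to $E$ and to $E^c$ shows that $\lambda(E)=0$ or $\lambda(E^c)=0$; hence $\lambda$ is $U$-ergodic as well. It therefore suffices to prove that $\mu$ is $U$-ergodic. I fix a $U$-invariant Borel set $E$, set $f=\mathbf 1_E$, and aim to show $\mu(f)=\mu(f)^2$, which forces $\mu(E)\in\{0,1\}$.

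The crucial point is that condition (3) applies not only to $f$ but to every function $\x\mapsto f(\x a_{-k})=\mathbf 1_{Ea_k}(\x)$ for $k\ge 0$: these are again bounded, measurable and $U$-invariant, because $A$ normalises $U$ and hence $Ea_k$ is $U$-invariant. For each fixed $k$, condition (3) then yields $f(\x a_{-k})=f(\y a_{-k})$ for $\eta$-almost every $(\x,\y)$, and taking a countable intersection over $k$ gives, for $\eta$-almost every couple, the equality $f(\x a_{-k})=f(\y a_{-k})$ for all $k\ge 0$. Consider the Birkhoff averages
\[
S_N(\x,\y)=\frac1N\sum_{k=0}^{N-1} f(\x a_{-k})\,f(\y a_{-k}).
\]
On the one hand, since $f$ is an indicator and the two factors coincide, $S_N(\x,\y)=\frac1N\sum_{k=0}^{N-1}f(\x a_{-k})$, which by ergodicity of the geodesic flow $(\Omega,\mu,a_{-1})$ converges to $\mu(f)$ for $\mu$-almost every $\x$, hence, by the absolute continuity of the marginals of $\eta$ (condition (2)), for $\eta$-almost every $(\x,\y)$. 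On the other hand, by condition (1) the couple is typical, which should force $S_N\to\mu(f)^2$; equating the two limits gives $\mu(f)=\mu(f)^2$.

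The hard part is the very last step, since typicality is only available for continuous compactly supported test functions, whereas $f\otimes f$ is a mere indicator. I would resolve this by approximation: choose $f_\varepsilon\in C^0_c(\mathcal E)$ with $0\le f_\varepsilon\le 1$ and $f_\varepsilon\to f$ in $L^1(\mu)$. For a typical couple, $\frac1N\sum_{k=0}^{N-1}(f_\varepsilon\otimes f_\varepsilon)(\x a_{-k},\y a_{-k})\to\mu(f_\varepsilon)^2$ by definition of $\mathcal T$. The error is controlled by the pointwise bound $|f(\x')f(\y')-f_\varepsilon(\x')f_\varepsilon(\y')|\le |f-f_\varepsilon|(\x')+|f-f_\varepsilon|(\y')$, applied along the orbit; the Birkhoff averages of the bounded function $|f-f_\varepsilon|$ in each coordinate converge, again by ergodicity of the geodesic flow and absolute continuity of the marginals, to $\mu(|f-f_\varepsilon|)$. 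Hence $\limsup_N\bigl|S_N-\mu(f_\varepsilon)^2\bigr|\le 2\,\mu(|f-f_\varepsilon|)$ for $\eta$-almost every couple, and letting $\varepsilon\to0$ gives $S_N\to\mu(f)^2$. Combined with the limit $\mu(f)$ from the previous paragraph, this proves $\mu(f)=\mu(f)^2$ and completes the reduction.
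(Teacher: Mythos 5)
Your proof is correct, but it follows a genuinely different route from the paper's. The paper proves that \emph{every} bounded $U$-invariant function $g$ has zero variance via a functional-analytic argument: it introduces the operator $\Phi(f)(\x)=\int f\,d\eta_\x$, shows $\Phi\colon L^2(\mu)\to L^1(\mu)$ is bounded (this is exactly where the hypothesis $D_2\in L^2(\mu)$ from condition (2) is used), conjugates by the Koopman operator of $a_1$ to form $\Xi_N=\frac1N\sum_{k=0}^{N-1}\Psi^k\circ\Phi\circ\Psi^{-k}$ with uniformly bounded $L^2\to L^1$ norm, and then approximates $g$ by continuous compactly supported functions in $L^2(\mu)$, the uniform norm bound controlling the error $\Xi_N(g-g_n)$. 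You instead specialize to indicators $f=\mathbf 1_E$ (which suffices, since ergodicity here is defined set-wise) and exploit an extremality phenomenon: condition (3) for a general bounded $U$-invariant function only gives the \emph{averaged} identity $\int f\,d\eta_\x=f(\x)D_1(\x)$ — this is precisely why the paper's Remark 5.3 stresses that (3) is weaker than $\eta$ being carried by couples on the same $U$-orbit — but for an indicator it reads $\eta_\x(E)=\mathbf 1_E(\x)\,\|\eta_\x\|$, which forces $\eta_\x$ to concentrate on $\{\mathbf 1_E=\mathbf 1_E(\x)\}$, hence $f(\x a_{-k})=f(\y a_{-k})$ for $\eta$-a.e.\ couple for each $k$ (using that $Ea_k$ is again $U$-invariant since $A$ normalizes $U$). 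You should spell out this concentration step, as it is the crux of your argument and is not literally what (3) states. After that, your pointwise Birkhoff argument in each coordinate, pushed to $\eta$ via absolute continuity of both marginals, together with the $L^1$-approximation bound $\bigl|\mu(f)-\mu(f_\varepsilon)^2\bigr|\le 2\mu(|f-f_\varepsilon|)$, closes the proof; note that for each fixed $\varepsilon$ this is a deterministic inequality obtained from a single couple in the relevant full-measure set, so your final phrase ``letting $\varepsilon\to0$ gives $S_N\to\mu(f)^2$'' should really be stated as passing to the limit in this inequality — a trivial repair. What your approach buys: it avoids the operator formalism entirely and never uses $D_2\in L^2$, only absolute continuity of the marginals (consistent with the paper's Remark 5.4 that the $L^2$ condition is harmless anyway); what the paper's approach buys is that it handles condition (3) in its general averaged form for arbitrary bounded invariant functions without needing the indicator trick, at the cost of the $L^2$ hypothesis. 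Your reduction of the $\lambda$-statement to the $\mu$-statement via Proposition 4.8, and your use of ergodicity of the time-one map $a_{-1}$ (which follows from Winter's mixing, as the paper itself notes when constructing typical couples), both match the paper.
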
 
\medskip
  
 The next section is devoted to the proof of this observation. 
The idea is the following: suppose $g$ is a bounded $U$-invariant function. We aim to prove that $g$ is constant $\mu$-almost everywhere.
Consider the integral of the ergodic averages for the function $g\otimes g$ on $\Omega^2$ with respect to $\eta$,

$$ J_N=\int_{\Omega^2} \frac1{N} \sum_{k=0}^{N-1} g\otimes g(\x a_{-k} , \y a_{-k}) d\eta(\x,\y).$$
If $\eta$ is supported only on couples on the same $U$-orbit, then since $g$ is constant on $U$-orbits, $g(\x a_{-k})=g( \y a_{-k})$
 for $\eta$-almost every $(\x,\y)$, so
\begin{align*}
 J_N & = \int_{\Omega^2} \frac1{N} \sum_{k=0}^{N-1}  g(\x a_{-k})^2 d\eta(\x,\y)\\
 &=\int_\Omega \frac1{N} \sum_{k=0}^{N-1} g(\x a_{-k})^2 D_1(\x) d\mu(\x),
 &=\int_\Omega g(\x)^2 \left( \frac1{N} \sum_{k=0}^{N-1}  D_1(\x a _k)  \right) d\mu(\x),
\end{align*}
so  $J_N\to \int_\Omega g^2 d\mu$ by the Birkhoff ergodic Theorem applied to $D_1$.
Observe that Property (3) is used in the first equality, and Property (2) in the second.

 For the sake of the argument, assume that $g$ is moreover {\em continuous with compact support}. Then by Condition (1) on typical couples, since $g\otimes g$ is continuous with compact support, the same sequence $J_N$ tends to $\int_{\Omega^2} g\otimes g d\mu=(\int_\Omega g d\mu)^2.$
Hence $g$ has zero variance, so is constant. Unfortunately, one cannot assume $g$ to be continuous, 
nor approximate it by continuous functions in $L^\infty(\mu)$. The regularity Condition (2) that $D_2 \in L^{2}$ will nevertheless 
allow us to use continuous approximations in $L^2(\mu)$.


 \subsection{Proof of Lemma \ref{reductiontwo}  }
  
 We first need to collect some facts about the operator $\Phi$, 
and its behaviour in relationship with ergodic averages for the negative-time geodesic flow $a_{-1}$. 
    
 \begin{lem} The operator $\Phi$ is a continuous linear operator from $L^2(\mu)$ to $L^1(\mu)$.
 \end{lem}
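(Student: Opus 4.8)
The plan is to estimate the $L^1(\mu)$-norm of $\Phi(f)$ directly in terms of the $L^2(\mu)$-norm of $f$, exploiting the fact that $\eta$ admits two disintegrations, along $p_1$ and along $p_2$. Linearity of $\Phi$ is immediate from its definition, so only boundedness needs proof. First I would reduce to the case $f\geq 0$ by replacing $f$ with $|f|$, noting that $|\Phi(f)(\x)|\leq \Phi(|f|)(\x)$ for $\mu$-almost every $\x$.

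Integrating against $\mu$ and using the disintegration of $\eta$ along $p_1$ applied to the function $(\x,\y)\mapsto |f(\y)|$, I obtain
\[
\int_\Omega \Phi(|f|)(\x)\, d\mu(\x) = \int_\Omega \left( \int_\Omega |f(\y)|\, d\eta_\x(\y) \right) d\mu(\x) = \int_{\Omega^2} |f(\y)|\, d\eta(\x,\y).
\]
The crucial step is now to re-express this same integral using the disintegration of $\eta$ along $p_2$: since $\eta^\y$ has total mass $D_2(\y)$, it becomes
\[
\int_{\Omega^2} |f(\y)|\, d\eta(\x,\y) = \int_\Omega |f(\y)| \left( \int_\Omega d\eta^\y(\x) \right) d\mu(\y) = \int_\Omega |f(\y)|\, D_2(\y)\, d\mu(\y).
\]
Finally, Cauchy--Schwarz gives $\int_\Omega |f|\, D_2 \, d\mu \leq \|D_2\|_{L^2(\mu)}\, \|f\|_{L^2(\mu)}$, and since $D_2 \in L^2(\mu)$ by Condition (2), this yields $\|\Phi(f)\|_{L^1(\mu)} \leq \|D_2\|_{L^2(\mu)}\, \|f\|_{L^2(\mu)}$, which is exactly the claimed continuity.

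The only subtlety, which the same computation resolves, is that $\Phi(f)$ must be well-defined as an element of $L^1(\mu)$: the finiteness of $\int_\Omega |f|\, D_2 \, d\mu$ forces the defining integral $\int_\Omega |f(\y)|\, d\eta_\x(\y)$ to converge for $\mu$-almost every $\x$. I do not expect any genuine obstacle here; the whole argument is a single Fubini-type switch between the two disintegrations, and the hypothesis $D_2 \in L^2(\mu)$ is precisely what renders the final Cauchy--Schwarz bound finite.
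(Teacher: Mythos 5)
Your proof is correct and is essentially identical to the paper's: the same reduction $|\Phi(f)|\leq \Phi(|f|)$, the same switch between the two disintegrations $\eta_\x$ and $\eta^\y$ to obtain $\int_\Omega |f|\,D_2\,d\mu$, and the same Cauchy--Schwarz step using $D_2\in L^2(\mu)$. Your closing remark on almost-everywhere finiteness of the defining integral is a small point the paper leaves implicit, but it changes nothing in substance.
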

As we will see, Property (2) of the measure $\eta$ is crucial here. 
 \begin{proof} Let $f\in L^2(\mu)$, we compute
 \begin{align*}
  \|\Phi(f)\|_{L^1(\mu)} & = \int_\Omega \left| \Phi(f)(\x) \right| d\mu(\x)\leq \int_\Omega \left( \int_\Omega |f(\y)| d\eta_\x(\y) \right) d\mu(\x), \\
  & \leq \int_{\Omega^2} |f(\y)| d\eta(\x,\y) \leq \int_\Omega |f(\y)| \left( \int_\Omega  d\eta^\y(\x) \right) d\mu(\y),\\
  & \leq \int_\Omega |f(\y)| D_2(\y) d\mu(\y)\leq \|f\|_{L^2(\mu)} \, \|D_2\|_{L^{2}(\mu)}.
 \end{align*}
 \end{proof}
 
 Given $f,g$ two functions on $\Omega$, write $f\otimes g$ for the function $f\otimes g(\x,\y)=f(\x)g(\y)$ on $\Omega^2$.
Denote by $\langle f,g \rangle_\mu=\int_\Omega f.g\,d\mu$ the usual scalar product on $L^2(\mu)$.  For $f \in L^\infty(\mu)$ and $g\in L^2(\mu)$, a simple calculation gives 
\[
\int_{\Omega^2}f\otimes g \, d\eta= \langle f, \Phi(g)\rangle_\mu. 
\]
 
 Let $\Psi$ be the Koopman operator associated to $a_{1}$, that is $\Psi(f)(\x)=f(\x a_1 )$. 

The Ergodic average of a tensor product can be written in terms of $\Phi$ and $\Psi$ the following way:
 \begin{align*}
  \int_{\Omega^2} \frac1{N} \sum_{k=0}^{N-1} f\otimes g(\x a_{-k} , \y a_{-k}) d\eta(\x,\y) &= 
\frac1{N} \sum_{k=0}^{N-1} \langle \Psi^{-k}(f), \Phi( \Psi^{-k}(g))\rangle_\mu,\\
  & = \langle f, \frac1{N} \sum_{k=0}^{N-1} \Psi^k \circ \Phi \circ \Psi^{-k}(g) \rangle_\mu \\
  & = \langle f, \Xi_N(g) \rangle_\mu,
  \end{align*}
 where $\Xi_N$ is the operator $\Xi_N =\frac1{N}\sum_{k=0}^{N-1} \Psi^k \circ \Phi \circ \Psi^{-k}$. 
Since the Koopman operator is an isometry from $L^q(\mu)$ to $L^q(\mu)$ for both $q=1$ and $q=2$, 
the operator $\Xi_N$ from $L^2(\mu)$ to $L^1(\mu)$ has norm at most 

$$\|\Xi_N\|_{L^2 \to L^1}\leq \|\Phi\|_{L^2 \to L^1}.$$

 
  Notice also that if $f,g$ are continuous with compact support, the above ergodic average converges toward 
$\langle f,1\rangle_\mu \langle g,1\rangle_\mu$ for $\eta$-almost every $x,y$, by Condition (1). 
By the Lebesgue dominated convergence Theorem, we also have
  \begin{equation}\label{Xi_n-converge}
   \lim_{N\to \infty} \langle f,\Xi_N(g)\rangle_\mu =\langle f,1\rangle_\mu \langle g,1\rangle_\mu.
  \end{equation}
 
 \medskip
 
 Let $g$ be a  bounded measurable, $U$-invariant function. Since $\Psi^{-k}(g)$ 
is also bounded and $U$-invariant, by property (3), we have
 \[
  \Phi(\Psi^{-k}(g))(\x)=g(\x a_{-k})D_1(\x).
 \]
 Therefore, 
 \[
 \Xi_N(g)(\x)=g(\x)\left(\frac1{N}\sum_{k=0}^{N-1} D_1(\x a_k  )\right).
 \]
  By the Birkhoff $L^1$-ergodic Theorem and boundedness of $g$, 
it follows that $\Xi_N(g)$ tends to $g$ in $L^1(\mu)$-topology.
  
  \medskip
  
 Our aim is to show that $g$ has variance zero. Let $(g_n)_{n\geq 0}$ be a sequence of 
uniformly bounded continuous functions with compact support converging to
 $g$ in $L^2(\mu)$ (and hence also in $L^1(\mu)$). 
Let $D>0$ be such that $\|g_n\|_\infty\leq D$ for all $n$. 
For $n,N$ positive integers, we have
 \begin{align*}
  \langle g,g \rangle_\mu - \langle g,1\rangle_\mu^2 = & \, \langle g-g_n,g \rangle_\mu + \langle g_n, g-\Xi_N(g) \rangle_\mu + \langle g_n, \Xi_N(g-g_n) \rangle_\mu \\
  & \,    + \left( \langle g_n, \Xi_N(g_n) \rangle_\mu-\langle g_n,1\rangle_\mu^2  \right)+ 
\left( \langle g_n,1\rangle_\mu^2 - \langle g,1\rangle_\mu^2\right).
 \end{align*} 
 
 Therefore,
 
\begin{align*}
 \left|  \langle g,g \rangle_\mu -\langle g, 1\rangle_\mu^2 \right| \leq & \|g-g_n\|_1\|g\|_\infty + D \|g-\Xi_N(g)\|_1 
 + D \|\Xi_N\|_{L^2\to L^1} \|g-g_n\|_2 \\
 &  + \left| \langle g_n, \Xi_N(g_n) \rangle_\mu-\langle g_n,1\rangle_\mu^2  \right|+ \|g-g_n\|_1\|g+g_n\|_1.
\end{align*}
 
 First fix $n$ and let $N$ goes to infinity. By what precedes, $\Xi_N(g)$ converges to $g$ in $L^1$ so that the
second term vanishes.  Since $g_n$ is continuous, by  (\ref{Xi_n-converge}),
 the last but one term of the upper bound vanishes. We obtain

\[
 \left|  \langle g,g \rangle_\mu -\langle g, 1\rangle_\mu^2 \right| \leq  \|g-g_n\|_1\|g\|_\infty  + D \|\Phi\|_{L^2\to L^1} \|g-g_n\|_2  + 2D \|g-g_n\|_1 .
\]
 We now let $n$ go to infinity, and we get
\[
\langle g,g \rangle_\mu -\langle g, 1\rangle_\mu^2=0
 \]
 Therefore, $g$ has variance zero, so is constant.


 \subsection{Constructing plenty of typical couples : the dimension 3 case} \label{constrplentydim3}

 \subsubsection*{The candidate to be the measure $\eta$, in dimension $3$}

First, recall that $N$ is identified with $\R^{d-1}=\R^2$. Fix also an isomorphism $U\simeq \R$, so that
the set $U^+$ of positive elements is well defined. 

Consider the map $\widetilde{\mathcal{R}}: \widetilde{\Omega}^2\to \widetilde{\Omega}^2$ defined as follows. 
The image $(\x',\y')$ of $(\x,\y)$ is the unique couple such that $x'^+=x^+=y'+$, $x'^-=x^-$, $y'^-=y^-$, 
$t_{\x'}=t_\x=t_\y$, and $\x^+,\y^+$ are the unique frames such that there exists $u\in U^+$ 
with $\x' u=\y'$.

\begin{figure}[ht!]\label{align}
\begin{center}
\input{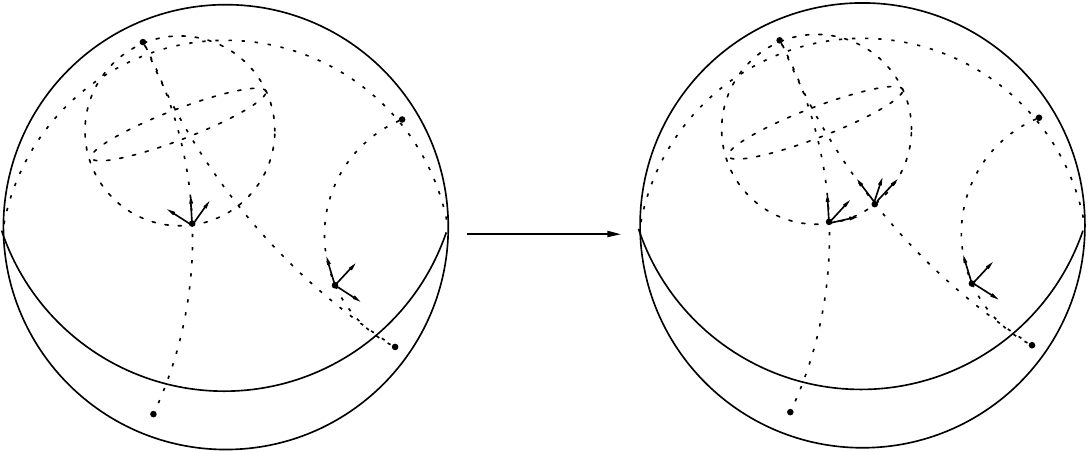_t}
\caption{The alignement map $\mathcal{R}$ } 
\end{center}
\end{figure} 

Consider the restriction of this map to couples $(\x,\y)$ inside some fundamental domain
 for the action of $\Gamma$ on $\widetilde{\Omega}$, so that 
we get a well defined map $\mathcal{R}:\Omega^2\to  \Omega^2$. 
Define $\eta$ as the image $\eta:= \mathcal{R}_*(\mu\otimes\mu)$. 

Observe that condition (1) in \ref{plenty} is automatic, as being typical depends only on $x^-$ and $y^-$. Remark \ref{eta-supported-on-U-orbits}
shows that condition (3) is also automatic. By Remark \ref{pas-besoin-de-L2}, we  only need to show that its projections $ (p_1)_*\eta$ and $ (p_2)_*\eta$ are
absolutely continuous w.r.t. $\mu$. That is the crucial part of the proof. We do it in the next sections. 

The key assumption will of course be our dimension assumption on $\delta_\Gamma>\dim N-\dim U$. Then, we will try to
follow the classical strategy of Marstrand, Falconer, Mattila. However, a new technical difficulty will appear, 
because we will need to do radial projections on circles instead of orthogonal projections on lines. 
The length of the proof below is due to this technical obstacle. 

\subsubsection*{Projections}
First of all, by lemma \ref{energie-finie}, we can  restrict the measure $\nu$ to some subset $A\subset\Lambda_\Gamma$ of measure as close to $1$ as we want, with $I_1(\nu_{|A})<\infty$. 
In the sequel, we denote by $\nu_A$ the measure restricted to $A$ and normalized to be a probability measure. 
 Fix four disjoint compact subsets $X_+,X_-,Y_+,Y_ -$ of $A\subset\Lambda_\Gamma$, 
each of positive $\nu $-measure, and write $\nu_{X_+},\nu_{X_-},\nu_{Y_+},\nu_{Y_-}$ for the Patterson measures restricted to each of these sets, 
normalized to be probability measures. Therefore, all their energies $I_1(\nu_{X_\pm})$ 
and $I_1(\nu_{Y_\pm})$ are finite. 
 
In fact, the definition of the measure $\eta$ will be slightly different than said above. 
First, $\tilde{\eta}$ will be the image by the projection map $\widetilde{\mathcal{R}}$ 
defined above of the restriction of $\tilde{\mu}\otimes\tilde{\mu}$ to the set of couples $(\x,\y)\in\widetilde{\Omega}^2$, 
such that $x^\pm\in X_\pm$ and $y^\pm \in Y_\pm$, $t_\x\in [0,1]$, $t_\y\in[0,1]$.  Then $\eta$ will be defined
on $\Omega^2$ as the image of $\tilde{\eta}$. 
 \medskip

 Pick   two distinct points outside $X_+$, called 'zero' and 'one'. 
For any $x^+\in X_+$, we identify $\partial\H^3\setminus \{x^+\}$ to the complex plane $\C$
by the unique homography, say $h^{x^+}:\partial\H^3\to  \C\cup\{\infty\}$, sending $x^+$ to $+\infty$, zero to $0$ and one to $1$. 
We get a well defined parametrization of angles, as soon as $x^+$ is fixed. 

\begin{rema}\label{varierxplus} Observe that when $x^+$ varies in the compact set $X_+$, as $0$ and $1$ do not belong to $X_+$, 
all the quantities defined geometrically (projections, intersections of circles, ...) vary analytically in $x_+$. 
\end{rema}

In particular, if $\x\in \Omega$ is a frame, 
the frame $\x^+$ in the boundary determines a unique half-circle from $x^+$ to $x^-$ in 
$\partial\H^3$, which is tangent to the first direction of $\x ^+$ at $x^+$, and therefore, a unique half-line originating from $x^-$ in $\C\simeq \partial \H^3\setminus\{x^+\}$. 
We use therefore an angular coordinate $\theta_\x\in [0,2\pi)$ instead of $\x^+$. 

 Let $\vec{u}_\theta$ be the unit vector $e^{i(\theta+\pi/2)}$ in the complex plane. 
Define the projection $ \pi_\theta^{x^+}$ in the direction $\theta$ from
$\partial \H^3\setminus\{x^+\}$ to itself as $\pi_\theta^{x^+}(z)=z.\vec{u}_\theta$. 
Observe that the line   $\R\vec{u}_\theta$ in $\C$, orthogonal to $\theta$, has a canonical parametrization, 
and a Lebesgue measure, denoted by $\ell^{x^+}_\theta$. 

\begin{figure}[ht!]\label{angles}
\begin{center}
\input{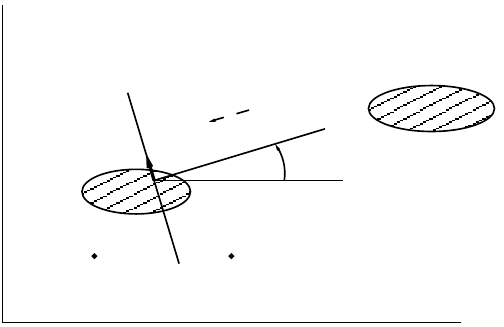_t}
\caption{Angular parameter on $\mathbb{C}\simeq\partial\H^3\setminus\{x^+\}$ } 
\end{center}
\end{figure} 
Once again, the variations of $x^+\mapsto \pi_\theta^{x^+}$ and $x^+\mapsto \ell^{x^+}_\theta$ 
are as regular as possible.
For measures, it means that the Lebesgue measures $\ell^{x^+}_\theta$ are equivalent one to another when $x^+$ varies, with analytic 
Radon-Nikodym derivatives in $x^+$ in restriction to any compact set of $\partial\H^d$ which does not contain
$x^+$.

Observe also that when $x^+$ varies in $X_+$, the distances $d^{x^+}$ induced by the complex metric on $\C\simeq \partial\H^3\setminus\{x^+\}$, 
when restricted to the compact set $X_-\cup Y_-$, are uniformly equivalent to the usual metric on $\partial \H^3$. 
In particular, if we denote by $I_1^{x^+}$ the energy of a measure relatively to the distance
$d^{x^+}$,  there exists a constant $c=c(X_+,X_-,Y_+)$ such that for all $x^+\in X_+$, 
\begin{eqnarray}\label{energies-unif-equiv}
\frac{1}{c}I_1(\nu_A)\le I_1^{x^+}(\nu_{X_-})\le c I_1(\nu_A) \quad\mbox{and}\quad 
\frac{1}{c}I_1(\nu_A)\le I_1^{x^+}(\nu_{Y_-})\le c I_1(\nu_A) 
\end{eqnarray}

 Rephrasing  Marstrand's projection Theorem in dimension 2, we have:
 
\begin{theo}(Falconer, \cite[p82]{MR2118797}, Mattila \cite[th 4.5]{Mattila})
\label{projection-Falconer} 
Assume that $I_1(\nu_A)<\infty$. 
Then for all fixed $x^+\in X^+$, and almost all $\theta\in[0,\pi)$, 
the projection $(\pi_\theta^{x^+})_*\nu_{Y^-}$ 
(resp. $(\pi_\theta^{x^+})_*\nu_{X^-}$) is absolutely continuous
w.r.t $\ell_\theta^{x^+}$. 
Moreover, the map $H^{x^+}$ defined as 
\[
H^{x^+}:(\theta,\xi)\in [0,\pi)\times \R \mapsto \frac{d(\pi_\theta^{x^+})_*\nu_{Y^-}}{d\ell^{x^+}_\theta}(\xi)
\] belongs to $L^2([0,\pi)\times \R)$, and we have
$\|H^{x^+}\|^2_{L^2([0,\pi)\times \R)}\le C I_1(\nu_A)$, 
with $C$ a universal constant which does not depend on $x^+\in X_+$.

In particular, as the variation in $x^+$ is analytic and $X^+$ compact, 
the map $(x^+, \theta,\xi)\to H^{x^+}_\theta(\xi)$ belongs to $L^2(X^+\times [0,\pi]\times\R)$, with $L^2$-norm bounded by the same upper bound $C I_1(\nu_A)$. 

The  same result is true when replacing $Y^-$ with $X^-$. 
\end{theo}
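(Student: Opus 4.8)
The statement is, for each fixed $x^+$, nothing but the $L^2$-form of Marstrand's projection theorem applied to the compactly supported planar measure $\nu_{Y^-}$ regarded inside $\C\simeq\partial\H^3\setminus\{x^+\}$; the only content beyond the classical theorem is that the $L^2$-bound is \emph{uniform} as $x^+$ ranges over $X^+$, together with the resulting joint integrability. The plan is therefore to reduce to the classical planar statement for a single $x^+$, and then to propagate the estimate in $x^+$ using the uniform comparison of energies (\ref{energies-unif-equiv}) and the analytic dependence recorded in Remark \ref{varierxplus}. First I would fix $x^+\in X^+$ and transport everything to $\C$ through the homography $h^{x^+}$. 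Since $Y^-$ is a compact subset of $\Lambda_\Gamma$ avoiding $x^+$, its image is a compact subset of $\C$, so $\nu_{Y^-}$ becomes a compactly supported measure on $\R^2$, and it has finite $1$-energy because $I_1^{x^+}(\nu_{Y^-})\le c\,I_1(\nu_A)<\infty$ by (\ref{energies-unif-equiv}). In these coordinates $\pi_\theta^{x^+}$ is the orthogonal projection onto $\R\vec u_\theta$ and $\ell_\theta^{x^+}$ the Lebesgue measure of the target line, so $(\pi_\theta^{x^+})_*\nu_{Y^-}$ and its candidate density $H^{x^+}(\theta,\cdot)$ are exactly the objects of the classical theorem.

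For the core estimate I would invoke the standard $L^2$ projection inequality: for a compactly supported measure $\sigma$ on $\R^2$ with $I_1(\sigma)<\infty$, one has $\int_0^\pi\|(\pi_\theta)_*\sigma\|_{L^2}^2\,d\theta\le C\,I_1(\sigma)$ with $C$ depending only on the Euclidean plane. The mechanism, which makes the uniformity transparent, is to write the $\delta$-thickened $L^2$-mass $\frac{1}{2\delta}\iint \mathbf 1_{|\pi_\theta(z-w)|<\delta}\,d\sigma(z)\,d\sigma(w)$, integrate in $\theta$, swap the order of integration, and use that the angular measure of $\{\theta:|\pi_\theta(z-w)|<\delta\}$ is $\lesssim\min(1,\delta/|z-w|)$; Fatou's lemma as $\delta\to0$ then produces the bound and, simultaneously, absolute continuity of $(\pi_\theta)_*\sigma$ for almost every $\theta$. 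Applied to $\sigma=\nu_{Y^-}$ in the $d^{x^+}$-coordinates (and likewise to $\nu_{X^-}$) this yields, for each fixed $x^+$, absolute continuity for a.e.\ $\theta$ and $\|H^{x^+}\|_{L^2([0,\pi)\times\R)}^2\le C\,I_1^{x^+}(\nu_{Y^-})$. I would simply cite Falconer and Mattila for this step.

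It then remains to pass to the uniform and joint statements. Since $C$ is universal and $I_1^{x^+}(\nu_{Y^-})\le c\,I_1(\nu_A)$ with $c$ independent of $x^+$ by (\ref{energies-unif-equiv}), we get $\|H^{x^+}\|_{L^2([0,\pi)\times\R)}^2\le C'\,I_1(\nu_A)$ uniformly in $x^+\in X^+$, which is the asserted bound. For the final assertion, Remark \ref{varierxplus} ensures that all the geometric data, and hence $(x^+,\theta,\xi)\mapsto H^{x^+}_\theta(\xi)$, depend analytically (in particular measurably) on $x^+$ on the compact set $X^+$, so integrating the uniform bound against the finite measure carried by $X^+$ gives $H\in L^2(X^+\times[0,\pi]\times\R)$ with the same order of bound $C'\,I_1(\nu_A)$. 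The hard part here is not any single projection statement, which is classical, but precisely securing this uniformity in $x^+$: this is exactly what (\ref{energies-unif-equiv}) is designed to provide, while Remark \ref{varierxplus} is what makes the triple integral meaningful.
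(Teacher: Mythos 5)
Your proposal is correct and follows essentially the same route as the paper: the paper's proof is precisely the one-liner of invoking the classical Falconer--Mattila $L^2$ projection theorem for each fixed $x^+$ and then using the energy comparison (\ref{energies-unif-equiv}) to replace $I_1^{x^+}(\nu_{Y^-})$ by $I_1(\nu_A)$, with the joint $L^2$ statement over $X^+\times[0,\pi]\times\R$ coming from the analytic dependence on $x^+$ (Remark \ref{varierxplus}) and compactness of $X^+$, exactly as you argue. The only difference is that you additionally sketch the standard $\delta$-thickening proof of the planar projection inequality, which the paper simply cites.
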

  
\begin{proof}
Thanks to the comparison (\ref{energies-unif-equiv}) between the different notions of energy, we can replace $I_1^{x^+}(\nu_{X_+})$ by $I_1(\nu_A)$, 
and get the desired result.  
\end{proof}


\subsubsection*{Hardy-Littlewood Maximal Inequality}

Let $H^{x^+}_\theta$ be the map 
\[H^{x^+}_\theta:\xi\in\R.\vec{u}_\theta\mapsto \frac{d(\pi_\theta^{x^+})_*\nu_{Y^-}}{d\ell^{x^+}_\theta}(\xi)
\] 
Its maximal function is defined as
\[
MH^{x^+}_\theta(t)=\sup_{\varepsilon>0}\frac{1}{2\varepsilon}\int_{t-\varepsilon}^{t+\varepsilon}\frac{d(\pi_\theta^{x^+})_*\nu_{Y^-}}{d\ell^{x^+}_\theta}(\xi)d\xi=
\sup_{\varepsilon>0}\frac{1}{2\varepsilon}\nu_{Y^-}(\{y\in Y^-, \pi_\theta^{x^+}(y)\in [t-\varepsilon,t+\varepsilon]\})\,.
\]

The strong maximal inequality of Hardy-Littlewood \cite{HL} with $p=2$ on $\R$ (of dimension $1$) asserts that there exists $C=C_{2,1}$ independent of $\theta$ 
such that  for all $\theta\in[0,\pi)$,  
\[
 \|MH^{x^+}_\theta\|_{L^2(\R)}\le C_{2,1}\|H_\theta^{x^+}\|_{L^2(\R)}
\]

We deduce that 
\[
\|MH^{x^+}\|_{L^2([0,\pi)\times\R)} \le \int_0^\pi C_{2,1}^2 \|H^{x^+}_\theta\|_{L^2(\R)}^2d\theta =C_{2,1}^2 \|H^{x^+}_\theta\|_{L^2([0,\pi)\times\R)}^2 <+\infty
\]

The above also holds for the map $G^{x^+}$ defined by \[G^{x^+}_\theta:\xi\in\R.\vec{u}_\theta\mapsto \frac{d(\pi_\theta^{x^+})_*\nu_{X^-}}{d\ell^{x^+}_\theta}(\xi)\,,
\] 
with the same constants. 
 

\subsubsection*{A geometric inequality}

We want to show that the projections $( p_i)_*\eta$ on $\Omega$ are absolutely continuous w.r.t. $\mu$. 
We will first prove it for $ p_1$, and then observe that for $ p_2$, the 
situation is completely symmetric, when reversing the role of $x^-$ and $y^-$. 

Given a Borel set $P=E_+\times E_-\times E_t\times E_\theta\subset X_+\times X_-\times [0,1]\times [0,2\pi)$, observe 
that 
\[
( p_1)_*\eta(P)=\\
\tilde{\mu}\otimes\tilde{\mu}(\{(\x,\y)\in \widetilde{\Omega}^2,\,
x^+\in E_+,x^-\in E_-,  t_\x\in E_t,   y^-\in C^{x^+}(x^-,E_\theta)\,\}
\]
where $C^{x^+}(x^-,E_\theta)$ is the cone of center $x^-$ with angles in $E_\theta$ in the complex plane $\C\simeq\partial\H^3\setminus\{x^+\}$. 

Similarly, 
\[
( p_2)_*\eta(P)=\tilde{\mu}\otimes\tilde{\mu}(\{(\x,\y)\in \widetilde{\Omega}^2,\,
x^+\in E_+,y^-\in E_-,t_\x\in E_t,   x^-\in C^{x^+}(y^-,E_\theta)\,\}\,.
\]

\begin{lem} To prove that $( p_1)_*\eta$ (resp. $( p_2)_*\eta$) is absolutely continuous w.r.t. $\mu$, it is enough to show that there
exists a nonnegative measurable map $F_1$ (resp. $F_2$) such that for all rectangles $P=E_+\times E_-\times E_t\times E_\theta\in X_+\times X_-\times [0,1]\times M$ 
(resp. $P=E_+\times E_-\times E_t\times E_\theta\in X_+\times Y_-\times [0,1]\times M$ )
we have 
\[
( p_1)_*\eta(P)\le \int_P F_1(x^+,x^-,\theta)d\nu_{X^+}(x^+)d\nu_{X^-}(x^-)dt d\theta
\]  and 
\[
( p_2)_*\eta(P)\le \int_P F_2(x^+,y^-,\theta)d\nu_{X^+}(x^+)d\nu_{Y^-}(y^-)dt d\theta
\]
with $F_1\in L^1(\nu_{X^+}\times\nu_{X_-}\times [0,\pi])$, and $F_2\in L^1(\nu_{X^+}\times\nu_{Y_-}\times [0,\pi))$ 
\end{lem}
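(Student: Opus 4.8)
The plan is to treat this statement as a soft measure-theoretic reduction: the genuine work, namely producing the densities $F_1,F_2$, is deferred to the following subsections, and here I would only exploit the product structure of $\mu$ in Hopf coordinates. Recall that in dimension $3$ the fibre $M=\SO(2)$ is parametrized by the angle $\theta$, so that the frame $\x^+$ at infinity is encoded by the pair $(x^+,\theta_\x)$; consequently $\tilde\mu$ is, in restriction to the compact chart $X_+\times X_-\times[0,1]\times M$, equivalent to the reference product measure $\rho_1:=d\nu_{X^+}(x^+)\,d\nu_{X^-}(x^-)\,dt\,d\theta$, the Radon--Nikodym derivative being the positive continuous Busemann density $e^{\delta\beta_{x^-}+\delta\beta_{x^+}}$, which is bounded away from $0$ and $\infty$ on this compact region. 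By the very construction of $\eta$ through the alignment map $\widetilde{\mathcal R}$, the projection $(p_1)_*\eta$ is carried by this chart (with $x^+\in X_+$, $x^-\in X_-$, $t_\x\in[0,1]$), so it would suffice to prove $(p_1)_*\eta\ll\mu$ there. I would set $\kappa_1:=F_1\,\rho_1$, which is a \emph{finite} measure precisely because $F_1$ is integrable against the product of $\nu_{X^+}$, $\nu_{X^-}$ and Lebesgue, and the hypothesis is exactly $(p_1)_*\eta(P)\le\kappa_1(P)$ for every rectangle $P$.

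The first real step is to upgrade this inequality from rectangles to all Borel sets. The measurable rectangles $E_+\times E_-\times E_t\times E_\theta$ form a semiring whose generated algebra $\mathcal A$ (finite disjoint unions of rectangles) generates the Borel $\sigma$-algebra of the chart. For $A=\bigsqcup_i P_i\in\mathcal A$, additivity of the two measures gives $(p_1)_*\eta(A)=\sum_i(p_1)_*\eta(P_i)\le\sum_i\kappa_1(P_i)=\kappa_1(A)$, so the inequality already holds on $\mathcal A$. Now I would observe that, since $(p_1)_*\eta$ and $\kappa_1$ are both finite Borel measures, their restrictions to $\mathcal A$ are countably additive premeasures; hence so is their difference $\kappa_1-(p_1)_*\eta$, which is moreover nonnegative on $\mathcal A$. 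By the Carath\'eodory--Hahn extension theorem, together with its uniqueness statement for finite (hence $\sigma$-finite) premeasures, the unique extension of $\kappa_1-(p_1)_*\eta$ to the Borel $\sigma$-algebra must be the signed set function $B\mapsto\kappa_1(B)-(p_1)_*\eta(B)$; being the extension of a nonnegative premeasure, it is a nonnegative measure. This yields $(p_1)_*\eta(B)\le\kappa_1(B)$ for every Borel set $B$.

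It would then remain only to conclude. Since $F_1$ is integrable we have $\kappa_1\ll\rho_1$, and $\rho_1$ is equivalent to $\mu$ on the chart, so $\kappa_1\ll\mu$; thus $\mu(B)=0$ forces $\rho_1(B)=0$, hence $\kappa_1(B)=0$, hence $(p_1)_*\eta(B)=0$, which is exactly $(p_1)_*\eta\ll\mu$. The case of $p_2$ is identical after exchanging the roles of $x^-$ and $y^-$: $(p_2)_*\eta$ is carried by $X_+\times Y_-\times[0,1]\times M$, one uses the reference measure $d\nu_{X^+}\,d\nu_{Y^-}\,dt\,d\theta$ and the density $F_2$, and the same extension argument applies. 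I expect the only delicate point of this lemma to be the promotion of the rectangle inequality to the whole $\sigma$-algebra via the premeasure argument; everything substantial, namely exhibiting the integrable densities $F_1,F_2$, is postponed to the following subsections, where it will rest on the radial projection estimate (Theorem \ref{projection-Falconer}) and the Hardy--Littlewood maximal inequality.
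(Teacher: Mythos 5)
Your proposal is correct and takes essentially the same route as the paper, whose proof of this lemma is a one-liner observing that the domination on rectangles together with the fact that rectangles generate the Borel $\sigma$-algebra of the chart yields absolute continuity. You in fact supply rigorously the two details the paper glosses over — the equivalence of $\mu$ with the product reference measure $d\nu_{X^+}\,d\nu_{X^-}\,dt\,d\theta$ on the compact chart (via the positive continuous Busemann density), and the upgrade of the rectangle inequality $(p_1)_*\eta(P)\le\kappa_1(P)$ to all Borel sets via the Carath\'eodory extension and its uniqueness for finite premeasures — which is exactly what is needed, since agreement on null rectangles alone would not suffice without the integrable dominating density.
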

\begin{proof}  It is clear that $\mu(P)=0$ will imply $( p_1)_*\eta(P)=0$ for all rectangles. 
As they generate the $\sigma$-algebra of $\widetilde{\Omega}\cap(X_+\times X_-\times[0,1]\times[0,\pi)$ 
it implies that $( p_1)_*\eta$ is absolutely continuous w.r.t. $\mu$.
The proof is the same with $ p_2$. 
\end{proof}

Let us show that such integrable maps $F_1$ and $F_2$ exist. 

In fact, we will prove that for all given $x^+$, $F_i(x^+,.)$ is integrable. 
And the fact that, as usual, the variation 
of all involved quantities in $x^+$ is analytic will imply that 
$\|F_i(x^+,.)\|$ is integrable also in $x^+$.

As said above, for $P=E_+\times E_-\times E_t\times E_\theta$  we have 
\[
( p_1)_*\eta(P)=\int_{E_+\times E_-\times E_t}\int_{Y^-} {\bf 1}_{C^{x^+}(x^-,E_\theta)}(y^-)d\nu_{Y_-}(y^-)d\nu_{X_-}(x^-)d\nu_{X_+}(x^+) dt 
\]

Now, we wish to study the quantity $\nu_{Y^-}(C^{x^+}(x^-,E_\theta))$ in order to  
prove that, $x^+$ being fixed, the radial projection of $\nu_{Y^-}$ on the circle of directions around $x^-$ is absolutely continuous 
w.r.t the Lebesgue measure $d\theta$, and control the norm of the Radon-Nikodym derivative, which a priori depends on, and 
needs to be integrable in the variable $x^+$. 

It seems now appropriate to use Theorem \ref{projection-Falconer} to conclude.
 Unfortunately, we have to prove that a radial projection is absolutely continuous, whereas
Theorem \ref{projection-Falconer} deals with orthogonal projection in a certain direction. 
The Hardy-Littlewood maximal $L^2$-inequality will allow us to overcome this difficulty. 

Denote by $\Theta^{x^+}(x^-,y^-)$ the angle in $\partial \H^3\setminus \{x^+\}\simeq \C$ at $x^-$ of the half-line from $x^-$ to $y^-$. 

First, as the distance from $X^-$ to $Y^-$ is uniformly bounded from below, the cone $C^{x^+}(x^-,[\theta_0-\varepsilon,\theta_0+\varepsilon])$
 intersected with $Y^-$ is uniformly 
included in a rectangle of the form $\{y^-\in Y^-, |\pi_{\theta_0}^{x^+}(y^-)-\pi_{\theta_0}^{x^+}(x^-)|\le c_0\varepsilon\}$, for some uniform
constant depending only on the sets $X_\pm$ and $Y_\pm$, and not on $\varepsilon, x^\pm,y^\pm$. In particular, the following result holds. \\

\begin{figure}[ht!]\label{radial-orthogonal} 
\input{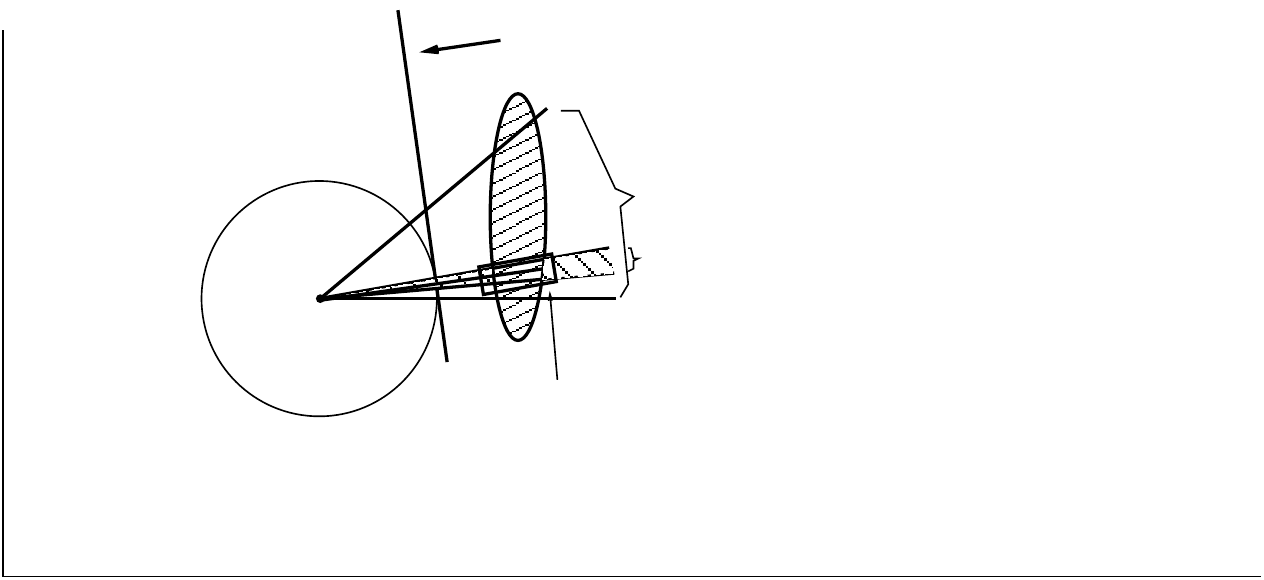_t}
\caption{Radial versus orthogonal projections of $\nu_{Y^-}$ }  
\end{figure} 

\begin{lem}\label{radial-versus-orthogonal} There exists a geometric constant $c_0>0$ depending only on the sizes and respective distances of the sets $X_\pm$ and $Y_\pm$, 
such that 
\[
\nu_{Y_-}(  C^{x^+}(x^-,[\theta_0-\varepsilon,\theta_0+\varepsilon])\cap Y^-)\le 2c_0\varepsilon MH_{\theta_0}^{x^+}(\pi_{\theta_0}^{x^+}(x^-)) 
\]
\end{lem}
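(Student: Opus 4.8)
The plan is to deduce the stated inequality directly from the geometric inclusion announced just before the lemma, the monotonicity of $\nu_{Y^-}$, and the very definition of the maximal function $MH_{\theta_0}^{x^+}$ as a supremum over window sizes. There is no real analytic content at this step: everything reduces to the elementary trigonometry relating the radial cone at $x^-$ to an orthogonal strip, together with the compactness of the sets $X_\pm,Y_\pm$.

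First I would make the inclusion quantitative. Fix $x^+\in X_+$ and work in $\C\simeq\partial\H^3\setminus\{x^+\}$. For $y^-\in Y^-$ lying in the cone $C^{x^+}(x^-,[\theta_0-\varepsilon,\theta_0+\varepsilon])$, set $r=|y^--x^-|$ and $\theta=\Theta^{x^+}(x^-,y^-)\in[\theta_0-\varepsilon,\theta_0+\varepsilon]$, so that $y^--x^-=re^{i\theta}$. Since $\vec u_{\theta_0}=e^{i(\theta_0+\pi/2)}=ie^{i\theta_0}$ is orthogonal to the direction $e^{i\theta_0}$ and $\pi_{\theta_0}^{x^+}(z)=z\cdot\vec u_{\theta_0}$ is the orthogonal projection onto $\R\vec u_{\theta_0}$, a one-line computation gives
\[
\pi_{\theta_0}^{x^+}(y^-)-\pi_{\theta_0}^{x^+}(x^-)=r\sin(\theta-\theta_0),
\]
whence $|\pi_{\theta_0}^{x^+}(y^-)-\pi_{\theta_0}^{x^+}(x^-)|\le r|\theta-\theta_0|\le r\varepsilon$. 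Because $X_-$ and $Y_-$ are disjoint compact sets and the metrics $d^{x^+}$ are uniformly equivalent to the usual metric on $X_-\cup Y_-$ for $x^+\in X_+$, the distance $r$ is bounded above by a constant $c_0=c_0(X_\pm,Y_\pm)$ independent of $\varepsilon,x^+,x^-,y^-$. This is exactly the inclusion
\[
C^{x^+}(x^-,[\theta_0-\varepsilon,\theta_0+\varepsilon])\cap Y^-\subset\{y^-\in Y^-:\ |\pi_{\theta_0}^{x^+}(y^-)-\pi_{\theta_0}^{x^+}(x^-)|\le c_0\varepsilon\}.
\]

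I would then conclude by monotonicity and the definition of the maximal function. Monotonicity of $\nu_{Y^-}$ bounds the left-hand side of the lemma by the $\nu_{Y^-}$-measure of the orthogonal strip $\{y:\ \pi_{\theta_0}^{x^+}(y)\in[t-c_0\varepsilon,t+c_0\varepsilon]\}$, where $t=\pi_{\theta_0}^{x^+}(x^-)$. Applying the definition of $MH_{\theta_0}^{x^+}$ to the single window of half-width $c_0\varepsilon$ shows that this measure is at most $2c_0\varepsilon\,MH_{\theta_0}^{x^+}(t)$, which is the claim. The only point deserving care --- and the mild obstacle here --- is bookkeeping the angle and projection conventions so that the perpendicular displacement comes out as $r\sin(\theta-\theta_0)$ rather than $r\cos(\theta-\theta_0)$, and verifying that the upper bound $c_0$ on $r$ is genuinely uniform in $x^+$; both follow from compactness and the analytic dependence on $x^+$ recorded in Remark~\ref{varierxplus}.
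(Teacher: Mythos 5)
Your proof is correct and is essentially the paper's own argument: the paper obtains the lemma by exactly the same two steps, namely the uniform inclusion of $C^{x^+}(x^-,[\theta_0-\varepsilon,\theta_0+\varepsilon])\cap Y^-$ in the strip $\{y^-\in Y^-:\,|\pi_{\theta_0}^{x^+}(y^-)-\pi_{\theta_0}^{x^+}(x^-)|\le c_0\varepsilon\}$ (asserted in the sentence preceding the lemma) followed by applying the definition of $MH_{\theta_0}^{x^+}$ to that single window of half-width $c_0\varepsilon$. Your explicit computation $\pi_{\theta_0}^{x^+}(y^-)-\pi_{\theta_0}^{x^+}(x^-)=r\sin(\theta-\theta_0)$ together with the uniform diameter bound $r\le c_0$ (uniform in $x^+\in X_+$ by Remark~\ref{varierxplus}) simply makes quantitative the inclusion the paper leaves implicit, and correctly identifies that only the upper bound on $r$ is needed for this direction.
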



\subsubsection*{Conclusion of the argument}

The above inequality does not allow directly to conclude. 
Let us integrate it in $\theta$, to recover the $L^2$-norm of the maximal Hardy-Littlewood function. 
The first inequality follows from the inclusion $[\theta_0-\varepsilon,\theta_0+\varepsilon]\subset [\theta-2\varepsilon,\theta+2\varepsilon]$ for $\theta$ in the first interval, 
the second inequality from Lemma \ref{radial-versus-orthogonal}. 
\begin{eqnarray*}
\nu_{Y_-}(&C^{x^+}&(x^-,[\theta_0-\varepsilon,\theta_0+\varepsilon])\cap Y^-)\\
&\le &\int_{\theta_0-\varepsilon}^{\theta_0+\varepsilon}\nu_{Y^-}(\{y^-\in Y^-, \Theta^{x^+}(x^-,y^-)\in [\theta-2\varepsilon,\theta+2\varepsilon]\})\,\frac{d\theta}{2\varepsilon}\\
&\le& 4c_0\varepsilon\int_{\theta_0-\varepsilon}^{\theta_0+\varepsilon} MH_{\theta}^{x^+}(\pi_{\theta}^{x^+}(x^-))\frac{d\theta}{2\varepsilon} \\
&=& 2c_0 \int_{\theta_0-\varepsilon}^{\theta_0+\varepsilon}MH_{\theta}^{x^+}(\pi_{\theta}^{x^+}(x^-))\,d\theta
\end{eqnarray*}

Define $F_1(x^+,x^-,\theta)$ as 
$$
F_1(x^+,x^-,\theta)=2c_0MH_{\theta}^{x^+}\left(\pi_{\theta}^{x^+}(x^-)\right)=
2c_0\sup_{\epsilon>0}\frac{1}{2\varepsilon}\int_{\pi_\theta^{x^+}(x^-)-\varepsilon}^{\pi_\theta^{x^+}(x^-)+\varepsilon} H_\theta^{x^+}(t)dt\,.
$$
The absolute continuity of $\pi_\theta^{x^+}$ w.r.t $\ell_\theta$, the Cauchy-Schwartz inequality and
the Hardy-Littlewood maximal inequality imply that
\begin{eqnarray*}
\|F_1(x^+,.,.)\|_{L^1([X^-\times[0,\pi])}&=& 2c_0 \int_{X^-}\int_0^\pi MH_{\theta}^{x^+}(\pi_{\theta}^{x^+}(x^-)) d\nu_{X^-}(x^-)d\theta\\
&=&\int_\R\int_0^\pi MH_{\theta}^{x^+}(\xi)\frac{d(\pi_\theta^{x^+})_*\nu_{X^-}}{d\ell_\theta^{x^+}}(\xi)d\xi d\theta\\
&\le & \|MH^{x_+}\|_{L^2(\R\times[0,\pi])}\times\left\|\frac{d(\pi_\theta^{x^+})_*\nu_{X^-}}{d\ell_\theta^{x^+}}(\xi)\right\|_{L^2(\R\times[0,\pi])}\\
&\le& C_1\|H^{x^+}\|_{L^2(\R\times[0,\pi])}\times\|G^{x^+}\|_{L^2(\R\times[0,\pi])}
  \end{eqnarray*} 
 which is, by Projection Theorem \ref{projL2}, 
  bounded from  above by $C_1^2 I_1(\nu_A)^2<\infty$. 

The uniformity of the bound in $x^+\in X^+$ allows to integrate once again the above quantities and deduce that $F_1\in L^1(X^-\times X^+\times[0,\pi])$. 


\subsection{The higher dimensional case}
\label{higherdimeta}

In higher dimension, the strategy of the proof is similar. 
 We want to build a measure $\eta$ on $\Omega^2$ which gives positive measure 
to {\em   plenty of couples on the same $U$-orbit}.

We will build $\eta$ from the measure $\mu\otimes\mu$, to obtain a measure
defined on (a subset of)  $\{(\x,\y)\in\Omega^2,\,\x U=\y U\}$, which gives full measure to typical couples 
$(\x,\y)$ (whose negative orbit satisfies Birkhoff ergodic theorem for the diagonal action of $a_{-1}$, and
whose projections $(p_1)_*\eta$ and $(p_2)*\eta$ on $\Omega$  are absolutely continuous w.r.t $\mu$.

Contrarily to the dimension $3$ case, we will not define any "alignment map". Indeed, 
given a typical couple $(\x,\y)$, one can begin as in dimension $3$, and try to 
find a frame $\x'\in \x M$ and a frame $\y'\in \x' U$ (or in other words $\y'U=\x'U$), so that in particular $y'^+=x'^+=x^+$, 
with the same past as $\y$ (that is, $y'^-=y^-$).
However, there is no canonical choice of such $\x'$, $\y'$, due to the fact that 
the dimension and/or the codimension of $U$  in $N$ will be greater than one. 

Therefore, we will directly define the new measure $\eta$, 
by a kind of averaging procedure of all good choices of
couples $(\x',\y')$.  \\

Identify  the horosphere $\x NM=\x MN$ in $T^1\H^d$
with a $d-1$-dimensional affine space. As in dimension $3$, we wish that the frames $\x'$ and $\y'$ have their
first vectors on $\x NM$, that $\x'$ belongs to the fiber $\x M$ of the vector $\pi_1(\x)$, and $y'^-=y^-$, so that $\y'$ belongs to the fiber $\y' M$ (with an abuse of notation, as $\y'$ is not well defined) of the well defined vector $v_\y=(y^-,x^+,t_\x)$ of $\x MN$. 

These vectors $\x M$ and $\y' M$ are well defined, so that the line from $\x M$ to $\y' M$ in the affine space
$\x NM$ is also well defined. 

Now, given any two frames $\x'$ and $\y'$ in the respective fibers of $\x M$ and $\y' M$, such that $\x' U=\y' U$,
 the $k$-dimensional oriented linear space $P=\x'UM$ contains the line from $\x M$ to $\y' M$. 
The set of such $P$ can be identified with ${ SO}(d-2)/\left(  SO (k-1)\times{ SO}(d-k-1)\right)$. 
 
We will first choose randomly $P$ using the ${ SO}(d-2)$-invariant measure on the latter space. 
Now, given $P$, the set of frames $\x'$ such that the direction of the affine subspace $\x'UM$ is $P$ can be identified with ${  SO}(k)\times {  SO}(d-k-1)$, and we choose $\x'$ randomly using the Haar measure of this group. This determines the element $u \in U$ such that $\x'uM=\y'M$, so it determines $\y'=\x'u$ completely. \\

As in dimension 3, the non-trivial part is to show that the measure obtained by this construction has absolutely continuous marginals. We first describe more precisely the construction to fix notations.


\subsubsection{Restriction of the support of $\mu\otimes\mu$}

Recall that the lift $\tilde{\mu}$ of the measure $\mu$ on $\widetilde{\Omega}$
can be written locally as 
\[
d\tilde{\mu}(\x)=d\nu(x^-)d\nu(x^+)dt_{\x}dm\,,
\]
where $dm$ denotes the Haar measure on the fiber $\x M$ over $\pi_1(\x)$. 
Remember that a frame $\x$ with first vector $\pi_1(\x)$ induces (by parallel transport until 
infinity) a frame at infinity in $T^1_{x^+}\partial \H^d$, or  $T^1_{x^-}\partial \H^d$, so that
$dm$ can also be seen as the Haar measure on the set of frames based at $x^-$ inside $T^1_{x^-}\partial\H^d$.  

As in dimension $3$, consider a subset $A\subset\Lambda_\Gamma$ of positive $\nu$-measure such that $I_1(\nu_{|A})<\infty$.
Choose four compact sets   $X^\pm, Y^\pm$ inside $A$, pairwise disjoint, 
and restrict $\tilde{\mu}\otimes\tilde{\mu}$
to the couples $(\x,\y)\in\widetilde{\Omega}^2$ such that $x^\pm\in X^\pm$ and $y^\pm \in Y^\pm$, and $t_\x,t_\y\in [0,1]$.   

\subsubsection{Coordinates on $\partial\mathbb{H}^d$}

 For the purpose of contructing $\eta$, it will be convenient to have a family of identifications of horospheres, or here the complement of a point $x^+$ in $\partial \H^d$, with  the vector space $\R^{d-1}$.
Let $(e_i)_{1\le i\le d-1}$ be the canonical basis of $\R^{d-1}$. 
Choose three different points $x_0^+ \in X^+$, $x_0^-\in X^-$ and $y_0^⁻\in Y^-$, in the support of $\nu_{|X^+},\nu_{|X^-}$ and $\nu_{|Y^-}$ respectively.

Now we want to get a unique homography $h_{x^+}$ from $\partial\H^d\setminus\{x^+\}$ 
to $\R^{d-1}$ sending $x_0$ to $0$, $y_0$ to $e_1$, 
and $x^+$ to infinity, with a smooth dependence in $x^+$. 

To do so, choose  successively $d-3$ other points, say $q_2$, ... $q_{d-2}$ in $\partial\H^d$, in such a way that, uniformly in $x^+\in X^+$, 
none of the points $x^+,x_0, y_0, q_2,\dots, q_{d-2}$ belongs to a circle containing three other points.
Now, it is elementary to check that there is a unique conformal map $h_{x^+}$ sending
$x^+$ to infinity, $x_0$ to $0$, $y_0$ to $e_1$, $q_2$ inside the half-plane $\R.e_1 + \R_+ e_2$, $q_3$ 
inside the half-space $\R e_1+\R e_2+\R_+ e_3$, and so on up to $q_{d-2}$. 
This is the desired map. 

Up to decreasing the size of $X^+$, $X^-$ and $Y^-$ using neighbourhoods of $x_0^+,x_0^-,y_0^-$ respectively,
we can moreover assume that for all these conformal maps uniformly in $x^+\in X^+, x^-\in X^-, y^-\in Y^-$, 
the first coordinate of the vector $\overrightarrow{h_{x^+}(x^-)h_{x^+}(y^-)}$ belongs to $[\frac{1}{2}, 2]$, and the norm of this vector is bounded by $3$.
In the sequel, we use the coordinates induced by $h_{x^+}$ on $\partial\H^d$. 

\subsubsection{A nice bundle}

We will construct a measure $\tilde{\eta}$ on the set 
$$
\mathcal{S}_\eta=\{(\x,\y)\in\Omega^2,\, : \, x^+=y^+ \in X^+, x^-\in X^-, y^-\in Y^- , \x U=\y U\}\,,
$$ 
and prove that it satisfies assumptions (1),(2),(3) of Lemma \ref{reductiontwo}, so that Theorem \ref{ergodicity} follows. 
Observe that this space $\mathcal{S}_\eta$ is a fiber bundle over some subset
$$
 \mathcal{P} \subset X^+\times X^-\times Y^-\times \mathcal{G}_{k }^{d-1}\, , 
$$
 whose projection is simply
$$
(\x,\y)\in \mathcal{S}_\eta\to (x^+,x^-,y^-,Vect(x_1,\dots, x_{k})) \in\mathcal{P} \,,
$$
where $Vect(x_1,\dots,x_{k})$ is the oriented $k$-linear space spanned by the $k$ first vectors of
the frame $\x^+$ at infinity with orientation $x_1\wedge...\wedge x_{k}$, 
or equivalently the $k$-plane spanned by these $k$ vectors viewed around $x^-$ at
infinity, i.e. inside $\R^{d-1}$ identified with $\partial\H^d\setminus\{x^+\}$ using the map $h_{x^+}$.

Moreover, observe that it is a principal bundle, whose fibers
are isomorphic  to ${SO}(k)\times { SO}(d-1-k)\times A$. 
Indeed, given a couple $(\x,\y)$ in the fiber of $(x^+,x^-,y^-,P)$, after maybe let $A$ act diagonally
so that both couples are based on the horosphere passing through the origin $o\in \H^d$, any other 
couple differs from $(\x,\y)$ only by changing $(x_1,\dots, x_{k})$ 
into another orthonormal basis of 
$Vect(x_1,\dots, x_{k})$, and $(x_{k+1},\dots,x_{d-1})$ into another orthonormal basis of 
$Vect(x_{k+1},\dots, x_{d-1})$, preserving the orientation.


\subsubsection{Defining the measure}

Given $x^+\in X^+$, we first define a measure $\bar{\eta}_{x^+}$ supported 
on the set 
\[
\mathcal{P}_{x^+}=\{(x^-,y^-,P) \; : \;  x^-\in X^-, y^- \in Y^-, P\in \mathcal{G}_{k}^{d-1}\,, s.t.   \,\,
\overrightarrow{h_{x^+}(x^-)h_{x^+}(y^-)} \in P  \} \,.
\] 
(a subset of $X^-\times Y^-\times \mathcal{G}_{k}^{d-1}$) as follows.

Observe that, thanks to our choice of coordinates, 
the vector $\overrightarrow{h_{x^+}(x^-)h_{x^+}(y^-)}$ has always a nonzero coordinate along $e_1$. 
Therefore, any $k$-plane $P$ containing $\overrightarrow{h_{x^+}(x^-)h_{x^+}(y^-)}$ is uniquely determined by its 
$k-1$-dimensional intersection $P\cap e_1^\perp$ with $e_1^\perp$.

Thus, we have a well defined measure on $\mathcal{P}_{x^+}$:
\[
d\bar{\eta}_{x^+}(x^-,y^-,P)=d\nu_{X^-}(x^-) d\nu_{Y^-}(y^-)d\sigma_{k-1}^{d-2}(P\cap e_1^\perp)\,,
\]
where $\sigma_{k-1}^{d-2}$ is the $SO(d-2)$-invariant 
probability measure on the Grassmannian manifold
 of $(k-1)$-planes in $e_1^\perp$.\\

 Now, $\mathcal{P}$ is a bundle over $X^+$ with fibers $\mathcal{P}_{x^+}$. Define $\bar{\eta}$ on $\mathcal{P}$
 as the measure which disintegrates as $\nu_{X^+}$ on the basis $X^+$ and $\bar{\eta}_{x^+}$ in the fibers.

Pick $\epsilon$ small enough, and lift $\bar{\eta}$ to $\widetilde{\eta}$ on $\widetilde{\Omega}^2$, or more precisely 
on its subset 
$$
\widetilde{S}_\eta=\{(\x,\y)\in \widetilde{\Omega}^2, \,\x U=\y U,\,\,t_\x=t_\y\in[0,\epsilon]\} \,
$$ by  endowing the fibers with the Haar measure of ${ SO}(k )\times { SO}(d-1-k)$ times the uniform probability measure on the interval $[0,\epsilon]$.

If $X^\pm,Y^\pm$ and $\epsilon$ are small enough, we can assume that the support of $\widetilde{\eta}$ is 
included inside the product of two single fundamental domains of the action of $\Gamma$ on ${ SO}^o(d,1)$, so that
it induces a well defined measure $\eta$ on the quotient. 

By construction, it is supported on couples $(\x,\y)$ in the same $U$-orbit, and 
as in dimension $3$, it gives full measure to couples $(\x,\y)$ which are typical 
in the past, because this property of being typical depends only on $x^-,y^-$, 
and $\nu_{|X^-}\otimes\nu_{|Y^-}$ gives full measure to the pairs $(x^-,y^-)$ which 
are negative endpoints of typical couples $(\x,\y)$.

The main point to check is that $(p_1)_*\eta$ and $(p_2)_*\eta$ are
absolutely continuous w.r.t. $\mu$.


\subsubsection{Absolute continuity}

Let us reduce the abolute continuity of $(p_i)_*\eta$ to another absolute continuity property, by a succession of
elementary observations. 

First, to prove that $(p_1)_*\eta$ and $(p_2)_*\eta$ are absolutely continuous 
w.r.t. $\mu$, it is sufficient to prove that $(\tilde{p}_1)_*\widetilde{\eta}$ and 
$(\tilde{p}_2)_*\widetilde{\eta}$, where $\tilde{p}_i:\widetilde{\Omega}^2\to \widetilde{\Omega}$ are the coordinates maps, are both absolutely continuous with respect to $\tilde{\mu}$.  
 
Both measures are defined on the compact set
 $$
T=\{ \x \in \widetilde{\Omega} \, : \, t_\x \in [0,\epsilon], \, x^+\in X^+, \, x^- \in (X^-\cup Y^-)\} \, .
$$
 This set $T$ is fibered over  
 $$
X^+\times(X^-\cup Y^-) \times \mathcal{G}_{k}^{d-1} \, ,
$$
 with projection map $\x\to (x^+,x^-,\x MU)$ and fiber isomorphic to $  {  SO}(k)\times {SO}(d-k-1)\times A$.

$$\xymatrix{
{ \widetilde{S}_\eta \subset\widetilde{\Omega}^2  } \ar[d]    \ar[r]^{\tilde{p}_i} & { T\subset\widetilde{\Omega} } \ar[d] \\
{\mathcal{P} \subset X^+\times X^-\times Y^-\times\mathcal{G}_k^{d-1} } \ar[d] 
   \ar[r]^-{\bar{p}_i}   &  {  X^+\times( X^-\cup Y^-)\times \mathcal{G}_k^{d-1} } \ar[d]\\
{X^+}   &  X^+ \\}
$$

On the upper left part of this diagram, observe that the measure $\tilde{\eta}$ disintegrates over $\mathcal{P}$, with the Haar measure of $SO(k)\times SO(d-1-k)\times A$ in the fibers, and $\bar{\eta}$ on $\mathcal{P}$.

Similarly, on the upper right of the diagram, the measure $\tilde{\mu}$ restricted to $T$ disintegrates over $X^+\times (X^-\cup Y^-)\times \mathcal{G}_k^{d-1}$, with measure $\nu_{X^+}\otimes \nu_{X^-\cup Y^-}\times \sigma_k^{d-1}$ on the basis, and Haar measure of $SO(k)\times SO(d-1-k)\times A$ in the fibers. 

Therefore, to prove that $(\tilde{p}_i)_*\tilde{\eta}$ is absolutely continuous w.r.t.
 $\tilde{\mu}$, it is enough to 
prove that $(\bar{p}_i)_*\bar{\eta}$ is absolutely continuous w.r.t. $\nu_{X^+}\otimes \nu_{X^-\cup Y^-}\times \sigma_k^{d-1}$.

Look at the lower part of the diagramm now. 
 The measure $\bar{\eta}$ itself  disintegrates over $X^+$, 
with $\nu_{X^+}$ on the base and $\bar{\eta}_{x^+}$ on each fiber $\mathcal{P}_{x^+}$, 
whereas the measure $(\bar{p}_i)_* \bar{\eta}$ disintegrates
also over $\nu_{X^+}$, with measure $\nu_{X^-\cup Y^-}\times \sigma_k^{d-1}$ on each fiber. 

Thus, it is in fact enough to prove that for $\nu_{X^+}$-almost every $x^+$, the image of the 
measure $\bar{\eta}_{x^+}$ under the natural projection map $\mathcal{P}_{x^+}\to \{x^+\}\times X^-\cup Y^-\times \mathcal{G}_k^{d-1}$ is absolutely continuous w.r.t. $\nu_{X^-\cup Y^-}\otimes \sigma_k^{d-1}$. 

The precise statement that we will prove is Lemma \ref{abscontdimsup}. By the above discussion, it implies that $(p_i)_*\eta$
is absolutely continuous w.r.t. $\mu$, and therefore, as in dimension $3$, Theorem \ref{ergodicity} follows from Lemma \ref{reductiontwo}.


\subsubsection{Absolute continuity of conditional measures}

 We  discuss now the absolute continuity of the marginals laws of $\bar{\eta}_{x^+}$. 

In order to do so, it is necessary to say a few words about the distance on the  Grassmannian manifolds of oriented subspaces that we shall use. As we are only interested in the local properties of the distance, we will (abusively) define it only on the Grassmannian manifold of unoriented subspaces. 

If $P$ is a $l$-dimensional subspace of a Euclidean space of dimension $n$, we write $\Pi_P$ for the orthogonal projection on $P$. 
If $P,P' \in \mathcal{G}_l^n$ are two $l$-dimensional subspaces, a distance between $P$ and $P'$ can be
 defined as the operator norm of $\Pi_P-\Pi_{P'}$ (which is also the operator norm of
$\Pi_{P^\perp}-\Pi_{(P')^\perp}$). 

We will use the following facts.
 \begin{enumerate} 
  \item The above distance is Lipschitz-equivalent to any Riemannian metric on 
$\mathcal{G}_l^n$, and $\sigma_l^n$ is a smooth measure. In particular, up to multiplicative constants, the measure of a ball of sufficiently small radius $r$ around a point $P$ is 
 $$
\sigma_{l}^{n}(B_{\mathcal{G}_{l}^{n}}(P,r)) \simeq r^{l(n-l)}.
$$
  \item Identify $e_1^\perp$ with $\R^{d-2}$. Define
  $$
(\mathcal{G}_{k}^{d-1})'=\{P \in \mathcal{G}_{k}^{d-1} \; : \; P \not\subset e_1^\perp\}.
$$  The map $P \in (\mathcal{G}_k^{d-1})'\mapsto P\cap e_1^\perp \in \mathcal{G}_{k-1}^{d-2}$ is well-defined and smooth, so that its restriction to any compact set is Lipschitz. 
  
  \item Let $P, P_1$ be two $k$-dimensional subspaces of $\R^{d-1}$. If $v\in P$, $\|v\|\leq 3$ and $d_{\mathcal{G}_k^{d-1}}(P,P_1)\leq r$,
  then 
  $$
  \| \Pi_{P_1^\perp}(v) \| \leq 3r .
  $$
 \end{enumerate}

 \begin{lem} \label{abscontdimsup}
 There exist two functions $F_{x^+,1} \in L^1(\nu_{X^-}\otimes \sigma_k^{d-1})$, $F_{x^+,2} \in L^1(\nu_{Y^-}\otimes \sigma_k^{d-1})$ such that for any $E  \subset (X^-\cup Y^-)$, any ball 
$B=B(P_0,r)\subset \mathcal{G}_k^{d-1}$  of sufficiently small radius $r$ around some $P_0 \in \mathcal{G}_k^{d-1}$, and any $x^+ \in X^+$,
  $$
\bar{\eta}_{x^+}(\{ (x^-,y^-,P)\in \mathcal{P}_{x^+} \; : \; (x^-,P) \in E\times B \})
  \leq \int_{E\times B} F_{x^+,1} \, d\nu_{X^-}\otimes \sigma_k^{d-1},
$$
  and 
  $$
\bar{\eta}_{x^+}(\{ (x^-,y^-,P)\in \mathcal{P}_{x^+} \; : \; (y^-,P) \in E\times B \})
  \leq \int_{E\times B} F_{x^+,2} \, d\nu_{ Y^-}\otimes \sigma_k^{d-1}.
$$
  Moreover, the $L^1$-norms of $F_{x^+,i}$ are uniformly bounded on $X^+$.
 \end{lem}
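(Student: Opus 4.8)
The plan is to realise $\bar{\eta}_{x^+}$ as a pushforward and then estimate the mass it assigns to ``slabs'' $E\times B(P_0,r)$. Writing $x=h_{x^+}(x^-)$, $y=h_{x^+}(y^-)$ and $Q=P\cap e_1^\perp$, the measure $\bar{\eta}_{x^+}$ is exactly the image of $\nu_{X^-}\otimes\nu_{Y^-}\otimes\sigma_{k-1}^{d-2}$ under $(x^-,y^-,Q)\mapsto(x^-,y^-,\mathrm{span}(Q,\overrightarrow{xy}))$; by the coordinate normalisation the $e_1$-component of $\overrightarrow{xy}$ lies in $[\tfrac12,2]$, so $P\mapsto P\cap e_1^\perp$ inverts $Q\mapsto\mathrm{span}(Q,\omega)$ on the relevant range. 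I would prove the bound for $F_{x^+,1}$, the statement for $F_{x^+,2}$ being identical after exchanging $X^-$ and $Y^-$ since the constraint $\overrightarrow{xy}\in P$ is symmetric. The heuristic is a dimension count: the smooth average over $Q\in\mathcal{G}_{k-1}^{d-2}$ fills $(k-1)(d-1-k)$ of the $k(d-1-k)$ dimensions of $\mathcal{G}_k^{d-1}$, and the remaining $d-1-k$ directions must come from the law of the direction $\omega=\overrightarrow{xy}/\|\overrightarrow{xy}\|$, which is why one needs exactly $\delta>d-1-k=\dim N-\dim U$.

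Fix $x^+$ and $x^-$. For the core estimate I would bound, for a small ball $B(P_0,r)$,
$$\int_{Y^-}\int_{\mathcal{G}_{k-1}^{d-2}}\mathbf{1}_{B(P_0,r)}\bigl(\mathrm{span}(Q,\overrightarrow{xy})\bigr)\,d\sigma_{k-1}^{d-2}(Q)\,d\nu_{Y^-}(y^-).$$
If $\mathrm{span}(Q,\overrightarrow{xy})=:P\in B(P_0,r)$, then projecting the unit vector $\omega\in P$ onto $P_0^\perp$ and using that the Grassmannian distance is the operator norm of $\Pi_{P^\perp}-\Pi_{P_0^\perp}$, one gets $\mathrm{dist}(\omega,P_0)=\|\Pi_{P_0^\perp}\omega\|\le r$; and, $\omega$ being fixed (and away from $e_1^\perp$), $Q\mapsto\mathrm{span}(Q,\omega)$ is a smooth immersion with derivative bounded above and below uniformly, so the admissible $Q$ have $\sigma_{k-1}^{d-2}$-measure at most $C\,r^{(k-1)(d-1-k)}$. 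For the direction constraint I would invoke the higher-codimensional analogue of Lemma \ref{radial-versus-orthogonal}: since $\|y-x\|\le 3$, the cone $\{\mathrm{dist}(\omega,P_0)\le r\}$ meets $Y^-$ inside the orthogonal slab $\{y:\ \|\Pi_{P_0^\perp}(y)-\Pi_{P_0^\perp}(x)\|\le C'r\}$, whence the inner $y$-mass is at most $(\Pi_{P_0^\perp})_*\nu_{Y^-}\bigl(B(\Pi_{P_0^\perp}x,C'r)\bigr)$. Dividing by the volume $\simeq r^{d-1-k}$ of that ball in $P_0^\perp\cong\R^{d-1-k}$ bounds this by $C''\,r^{d-1-k}\,M_{HL}\!\left[(\Pi_{P_0^\perp})_*\nu_{Y^-}\right]\!\bigl(\Pi_{P_0^\perp}x\bigr)$, where $M_{HL}$ is the Hardy--Littlewood maximal operator on $\R^{d-1-k}$.

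Multiplying the two factors gives a mass over $E\times B(P_0,r)$ at most $C\,r^{k(d-1-k)}\int_E M_{HL}[(\Pi_{P_0^\perp})_*\nu_{Y^-}](\Pi_{P_0^\perp}x^-)\,d\nu_{X^-}(x^-)$, and since $\sigma_k^{d-1}(B(P_0,r))\simeq r^{k(d-1-k)}$, a Lebesgue differentiation argument in $P\in\mathcal{G}_k^{d-1}$ yields the claimed inequality with
$$F_{x^+,1}(x^-,P)=C\,M_{HL}\!\left[(\Pi_{P^\perp})_*\nu_{Y^-}\right]\!\bigl(\Pi_{P^\perp}(x^-)\bigr),$$
and symmetrically $F_{x^+,2}(y^-,P)=C\,M_{HL}[(\Pi_{P^\perp})_*\nu_{X^-}](\Pi_{P^\perp}(y^-))$. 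It remains to check $F_{x^+,1}\in L^1(\nu_{X^-}\otimes\sigma_k^{d-1})$. Writing $H_P,G_P$ for the densities of $(\Pi_{P^\perp})_*\nu_{Y^-}$ and $(\Pi_{P^\perp})_*\nu_{X^-}$ (absolutely continuous for $\sigma_k^{d-1}$-a.e.\ $P$ by Marstrand's theorem, as $\delta>d-1-k$), I would compute
$$\|F_{x^+,1}\|_{L^1}=C\!\int_{\mathcal{G}_k^{d-1}}\!\int_{P^\perp}M_{HL}[H_P]\,G_P\,d\mathcal{L}_{P^\perp}\,d\sigma_k^{d-1}(P)\le C\!\int_{\mathcal{G}_k^{d-1}}\|M_{HL}H_P\|_{2}\,\|G_P\|_{2}\,d\sigma_k^{d-1}(P),$$
and then use the $L^2$ Hardy--Littlewood inequality ($\|M_{HL}H_P\|_2\le C\|H_P\|_2$) together with Cauchy--Schwarz over $\mathcal{G}_k^{d-1}$ to dominate this by $C\bigl(\int\|H_P\|_2^2\,d\sigma_k^{d-1}\bigr)^{1/2}\bigl(\int\|G_P\|_2^2\,d\sigma_k^{d-1}\bigr)^{1/2}$. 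By the $L^2$-form of Theorem \ref{projL2} applied to projections onto $(d-1-k)$-planes, these integrals are bounded by $C\,I_{d-1-k}(\nu_{Y^-})$ and $C\,I_{d-1-k}(\nu_{X^-})$, finite once $A$ is chosen via Lemma \ref{energie-finie} for the exponent $t=d-1-k$ (legitimate precisely because $\delta>d-1-k$). All constants and maps depend smoothly on $x^+$ over the compact set $X^+$, giving the uniform bound on $\|F_{x^+,i}\|_{L^1}$.

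The main obstacle is the middle step: converting the radial constraint ``$\omega$ near $P_0$'' into an honest orthogonal projection onto the $(d-1-k)$-dimensional space $P_0^\perp$ and packaging it through the maximal function, which is the higher-codimension replacement for the dimension-$3$ passage from cones to slabs (Lemma \ref{radial-versus-orthogonal}). The rest is bookkeeping of the two independent sources of randomness, $Q$ and $\omega$, the one genuine arithmetic input being that the exponents combine to exactly $r^{k(d-1-k)}=\sigma_k^{d-1}(B(P_0,r))$.
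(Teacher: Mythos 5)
Your proposal is correct and follows essentially the same route as the paper's proof: the same fibred description of $\bar{\eta}_{x^+}$ as a pushforward of $\nu_{X^-}\otimes\nu_{Y^-}\otimes\sigma_{k-1}^{d-2}$, the same two estimates (the uniformly bi-Lipschitz correspondence $Q\leftrightarrow P\cap e_1^\perp$ giving the factor $r^{(k-1)(d-1-k)}$, and the slab bound $\|\Pi_{P^\perp}(\overrightarrow{h_{x^+}(x^-)h_{x^+}(y^-)})\|\leq 3r$ giving $r^{d-1-k}$ times a Hardy--Littlewood maximal function of the projected measure), the same exponent count against $\sigma_k^{d-1}(B(P_0,r))\simeq r^{k(d-1-k)}$, and the same $L^1$ conclusion via Cauchy--Schwarz, the $L^2$ maximal inequality and the $L^2$ bound of Theorem \ref{projL2} with the energy $I_{d-1-k}(\nu)$ made finite by Lemma \ref{energie-finie}. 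The only point where you deviate is the passage from the estimate with the maximal function evaluated at the centre $P_0$ to the integrated bound $\int_{E\times B}F$: you appeal to a Lebesgue differentiation argument (workable, but it requires some care with measurability and the absence of a singular part), whereas the paper simply runs the same estimate with $P_0$ replaced by an arbitrary $P_1\in B(P_0,r)$ and radius $2r$, then averages over $P_1$ with the normalised measure $d\sigma_k^{d-1}/\sigma_k^{d-1}(B(P_0,r))$ --- a one-line substitute that your uniform core estimate already furnishes, so the difference is cosmetic.
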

 \begin{proof} We prove only the second inequality, the first one is similar and only exchanges the roles of $x^-$ and $y^-$ in the following. \\
  First choose some $P_1 \in B_{\mathcal{G}_k^{d-1}}(P_0,r)$.
  If $(x^-,y^-,P)\in \mathcal{P}_{x^+}$ with  $P \in B_{\mathcal{G}_k^{d-1}}(P_1,2r)$, then, provided $r$ is small enough, both $P$ and $P_1$ are in a fixed compact subset of  $(\mathcal{G}_{k}^d)'$. This implies that for some fixed $c_0>0$,
  $$ Q=P\cap e_1^\perp \in B_{\mathcal{G}_{k-1}^{d-2}}(P_1\cap e_1^\perp,c_0 r).$$
  We also have 
  $$
d_{P_1^\perp}(\Pi_{P_1^\perp}(x^-),\Pi_{P_1^\perp}(y^-))\leq 6r.
$$
  Thus we have the inequalities
  \begin{align*}
   & \bar{\eta}_{x^+}  (\{ (x^-,y^-,P)\in \mathcal{P}_{x^+} \; : \; (y^-,P) \in E\times B_{\mathcal{G}_k^{d-1}}(P_1,2r) \})\\
  & = \int 1_{B_{\mathcal{G}_k^{d-1}}(P_1,2r)}(Q\oplus \overrightarrow{h_{x^+}(x^-)h_{x^+}(y^-)}) \, d\nu_{X^-}(x^-)  d\nu_{Y^-}(y^-) d\sigma_{k-1}^{d-2}(Q),\\
  & \leq \sigma_{k-1}^{d-2}(B_{\mathcal{G}_{k-1}^{d-2}}(P_1\cap e_1^\perp,c_0 r)) \int_{E}\int_{X^-} 1_{B(\Pi_{P_1^\perp}(y),6r)}(\Pi_{P_1^\perp}(x))d\nu_{ X^-}(x^-)d\nu_{ Y^-}(y^-),\\
  & \leq \sigma_{k-1}^{d-2}(B_{\mathcal{G}_{k-1}^{d-2}}(P_1\cap e_1^\perp,c_0 r)) \int_{E} (\Pi_{P_1^\perp})_*\nu_{ X^-}(B(\Pi_{P_1^\perp}(x),6r))d\nu_{ Y^-}(y^-)\\
  & \leq \sigma_{k-1}^{d-2}(B_{\mathcal{G}_{k-1}^{d-2}}(P_1\cap e_1^\perp,c_0 r)) \int_{E} (6r)^{d-k-1} \, MH_{x^+,P_1}(\Pi_{P_1^\perp}(y))d\nu_{ Y^-}(y^-),
    \end{align*}
 where $MH_{x^+,P_1}$ is the maximal function 
 $$
MH_{x^+,P_1}(v)=\sup_{\rho>0} \rho^{-(d-k-1)}
 \int_{B_{P_1^\perp}(v,\rho)} \frac{d(\Pi_{P_1^\perp}\circ h_{x^+})_* \nu_{ X^-}}{dw}(w) dw\,.
$$

  We now integrate this inequality over $P_1 \in B_{\mathcal{G}_k^{d-1}}(P_0,r)$ 
using the uniform measure and the fact that
$$
B_{\mathcal{G}_k^{d-1}}(P_0,r)\subset B_{\mathcal{G}_k^{d-1}}(P_1,2r).
$$
   We obtain
  \begin{align*}
     & \bar{\eta}_{x^+}  (\{ (x^-,y^-,P)\in \mathcal{P}_{x^+} \; : \; (y^-,P) \in E\times B_{\mathcal{G}_k^{d-1}}(P_0,r) \})\\
     & \leq \int_{B_{\mathcal{G}_{k}^{d-1}}(P_0,r))}
     \bar{\eta}_{x^+}  (\{ (x^-,y^-,P)\in \mathcal{P}_{x^+} \; : \; (y^-,P) \in E\times B_{\mathcal{G}_k^{d-1}}(P_1,2 r) \} 
     \frac{d\sigma_k^{d-1}(P_1)}{\sigma_{k}^{d-1}(B_{\mathcal{G}_{k}^{d-1}}(P_0,r))}\\
     & \leq \int_{E\times B_{\mathcal{G}_{k}^{d-1}}(P_0,r))}
     \frac{ \sigma_{k-1}^{d-2}(B_{\mathcal{G}_{k-1}^{d-2}}(P_1\cap e_1^\perp,c_0 r)) (6r)^{d-k-1}}
     {\sigma_{k}^{d-1}(B_{\mathcal{G}_{k}^{d-1}}(P_0,r)) }
      MH_{x^+,P_1}(\Pi_{P_1^\perp}(y^-))d\nu_{|Y^-}(y^-)d\sigma_k^{d-1}(P_1).
  \end{align*}
Now, the ratio
$$
\frac{ \sigma_{k-1}^{d-2}(B_{\mathcal{G}_{k-1}^{d-2}}(P_1\cap e_1^\perp,c_0 r)) (6r)^{d-k-1}}
     {\sigma_{k}^{d-1}(B_{\mathcal{G}_{k}^{d-1}}(P_0,r)) },
$$
is bounded by a uniform constant $c>0$, 
since the dimension of the Grassmannian manifolds $\mathcal{G}_{r}^{n}$ is $r(n-r)$, 
so the above ratio is comparable, up to multiplicative constants, 
with $\frac{r^{(k-1)(d-k-1)}\times r^{d-k-1}}{r^{k(d-k-1)}}= 1$.\\
This proves an inequality of the desired form with the function
     $$F_{x^+,2}(y^-,P)=c \, MH_{x^+,P}(\Pi_{P^\perp}(h_{x^+}(y^-))).$$
     We still have to show that this function is in $L^1(\nu_{Y^-}\otimes \sigma_k^{d-1})$. 
Let us compute its norm
     \begin{align*}
     \mathcal{N}=&\int_{Y^-\times \mathcal{G}_{k-1}^{d-2}} MH_{x^+,P}(\Pi_{P^\perp}(h_{x^+}(y^-)))\,d\nu_{ Y^-}(y^-)d\sigma_{k}^{d-1}(P)\\
     &=\int_{\mathcal{G}_{k-1}^{d-2}} \left( \int_{P^\perp} MH_{x^+,P}(v)\,d(\Pi_{P^\perp}\circ h_{x^+})_*\nu_{|Y^-}(v)\right) d\sigma_{k}^{d-1}(P)\\
     &=\int_{\mathcal{G}_{k-1}^{d-2}} \left( \int_{P^\perp} MH_{x^+,P}(v)\,\frac{d(\Pi_{P^\perp}\circ h_{x^+})_*\nu_{|Y^-}}{dv}(v) dv\right) d\sigma_{k}^{d-1}(P).
     \end{align*}
     By \cite[Theorem 9.7]{MR1333890}, the two Radon-Nikodym derivatives
     $$\frac{d(\Pi_{P^\perp}\circ h_{x^+})_*\nu_{|Y^-}}{dv}, \; \frac{d(\Pi_{P^\perp}\circ h_{x^+})_*\nu_{|X^-}}{dv},$$
     have the square of their $L^2$-norms bounded by a constant times  the respective energies
     $$I_{d-1-k}((h_{x^+})_* \nu_{|Y^-}), \; I_{d-1-k}((h_{x^+})_* \nu_{|X^-}).$$
     By the Hardy-Littlewood inequality \cite[Theorem 2.19]{MR1333890}, this is also true for their maximal functions, with a different constant. 
     By the choices of $X^+,X^-,Y^-$ and $h_{x^+}$, the family of maps $(h_{x^+})_{x^+\in X^+}$ is uniformly bilispchitz when restricted to the compact set $X^-\cup Y^-$. In particular, the above energies are in turn bounded by a constant times $I_{d-1-k}(\nu)$.\\
     The integral $\mathcal{N}$ is thus the scalar product of two $L^2$ functions, each one of norm less that a fixed multiple of $\sqrt{I_{d-1-k}(\nu)}$.\\
     This implies that there exists a constant $c>0$ such that
     $$\mathcal{N} \leq c \, I_{d-1-k}(\nu).$$
 \end{proof}





\section{Acknowledgments}

The authors thank warmly S\'ebastien Gou\"ezel for all the interesting discussions and useful comments on the subject.



\bibliography{biblio}{}
\bibliographystyle{plain}

\end{document}